\newtheorem{theorem}{Theorem}[section]
\newtheorem{proposition}[theorem]{Proposition}
\newtheorem{lemma}[theorem]{Lemma}
\newtheorem{claim}[theorem]{Claim}
\newtheorem*{claim*}{Claim}
\newtheorem{corollary}[theorem]{Corollary}
\newtheorem{Main Conjecture}[theorem]{Main Conjecture}
\theoremstyle{remark}
\newtheorem{example}[theorem]{Example}
\newtheorem{remark}[theorem]{Remark}
\theoremstyle{plain}
\newtheorem*{nonvanishing*}{Nonvanishing}
\newtheorem*{theorem*}{Theorem}
\newtheorem*{lemma*}{Lemma}
\newtheorem*{corollary*}{Corollary}
\newcommand\polyName{indicator polytope}
\newcommand\tabFCI{{\sf PerfectTab}}
\newcommand\tabSort{{\sf PerfectTab}_{\downarrow}}
\newcommand{\TT}{\mathcal{T}}
\newcommand{\LL}{\mathcal{L}}
\newcommand{\cellsize}{19}
\newlength{\cellsz} \setlength{\cellsz}{\cellsize\unitlength}
\newsavebox{\cell}
\sbox{\cell}{\begin{picture}(\cellsize,\cellsize)
\put(0,0){\line(1,0){\cellsize}}
\put(0,0){\line(0,1){\cellsize}}
\put(\cellsize,0){\line(0,1){\cellsize}}
\put(0,\cellsize){\line(1,0){\cellsize}}
\end{picture}}
\newcommand\cellify[1]{\def\thearg{#1}\def\nothing{}%
\ifx\thearg\nothing
\vrule width0pt height\cellsz depth0pt\else
\hbox to 0pt{\usebox{\cell} \hss}\fi%
\vbox to \cellsz{
\vss
\hbox to \cellsz{\hss$#1$\hss}
\vss}}
\newcommand\tableau[1]{\vtop{\let\\\cr
\baselineskip -16000pt \lineskiplimit 16000pt \lineskip 0pt
\ialign{&\cellify{##}\cr#1\crcr}}}
\newcommand{\excise}[1]{}
\newcommand{\nolabel}{\circ}
\begin{document}
\pagestyle{plain}
\title{{An efficient algorithm for deciding vanishing of Schubert polynomial coefficients}}
\author{Anshul Adve}
\author{Colleen Robichaux}
\author{Alexander Yong}
\address{UCLA, Los Angeles, CA 90095}
\email{aadve@g.ucla.edu}
\address{Dept.~of Mathematics, U.~Illinois at Urbana-Champaign, Urbana, IL 61801, USA} 
\email{cer2@illinois.edu, ayong@illinois.edu}
\date{March 3, 2021}

\begin{abstract}
\emph{Schubert polynomials} form a basis of all polynomials and appear in the study of cohomology rings of flag manifolds. The {vanishing} problem for Schubert polynomials asks if a coefficient of a Schubert polynomial is zero. We give a tableau criterion to solve this problem, 
from which we deduce the first polynomial time algorithm. These results are obtained from new characterizations of the \emph{Schubitope}, a generalization of the  permutahedron defined for any
subset of the $n\times n$ grid. In contrast, we show that computing these coefficients explicitly is $\#${\sf P}-{\sf complete}. 
\end{abstract}
\maketitle

\vspace{-.2in}

\section{Introduction}\label{sec:intro}
\emph{Schubert polynomials} form a  linear basis of all polynomials ${\mathbb Z}[x_1,x_2,x_3,\ldots]$. They were introduced by A.~Lascoux--M.-P.~Sch\"{u}tzenberger \cite{LS1} to study the cohomology ring of the flag manifold. These polynomials
represent the Schubert classes under the Borel isomorphism. A reference is the textbook \cite{Fulton:YT}.

If $w_0=n \ n-1 \ \cdots 2 \ 1 $ is the longest length permutation 
in $S_n$, then 
\[{\mathfrak S}_{w_0}(x_1,\ldots,x_n):=x_1^{n-1}x_2^{n-2}\cdots x_{n-1}.\] 
Otherwise, $w\neq w_0$ and 
there exists $i$ such that $w(i)<w(i+1)$. Then one sets
\[{\mathfrak S_w}(x_1,\ldots,x_n)=\partial_i {\mathfrak S}_{ws_i}(x_1,\ldots,x_n),\] 
where $s_i$ is the transposition swapping $i$ and $i+1$ and
\[\partial_i f:= \frac{f(\ldots,x_i,x_{i+1},\ldots)-f(\ldots,x_{i+1},x_i,\ldots)}{x_i-x_{i+1}}.\] 
Since $\partial_i$ satisfies
\[\partial_i\partial_j=\partial_j\partial_i \text{\ for $|i-j|>1$, and \ } 
\partial_i\partial_{i+1}\partial_i=\partial_{i+1}\partial_i\partial_{i+1},\]
the above description of ${\mathfrak S}_w$ is well-defined. In addition,
under the inclusion 
$\iota: S_n\hookrightarrow S_{n+1}$ defined by
$w(1)\cdots w(n) \mapsto w(1) \ \cdots w(n) \ n+1$,
${\mathfrak S}_w={\mathfrak S}_{\iota(w)}$. 
Thus one unambiguously
refers to ${\mathfrak S}_w$ for each $w\in S_{\infty}=\bigcup_{n\geq 1} S_n$. 

The \emph{graph} $G(w)$ of a
permutation $w\in {S}_n$ is the $n\times n$ grid, with a $\bullet$ placed
in position $(i,w(i))$ (in matrix coordinates).
The \emph{Rothe diagram}
of $w$ is given by 
\[D(w)=\{(i,j): 1\leq i,j\leq n, j<w(i), i<w^{-1}(j)\}.\] 
This is pictorially described with rays that
strike out boxes south and east of each $\bullet$ in $G(w)$. $D(w)$ are the remaining boxes.

The \emph{code} of $w$, denoted ${\sf code}(w)$ is the vector $(c_1,c_2,\ldots,c_L)$ where $c_i$ is the number of boxes in the $i$-th row 
of $D(w)$ and $L$ indexes the southmost row with a positive number of boxes. 
To each $w\in S_{\infty}$ there is a unique associated code; see \cite[Proposition~2.1.2]{Manivel}.
\begin{example}\label{ex:Rothe}
If 
$w=53841267 \in {S}_{8}$ (in one line notation)
then $D(w)$ is depicted by: 
\[
\begin{tikzpicture}[scale=.5]
\draw (0,0) rectangle (8,8);

\draw (0,7) rectangle (1,8);
\draw (1,7) rectangle (2,8);
\draw (2,7) rectangle (3,8);
\draw (3,7) rectangle (4,8);

\draw (0,6) rectangle (1,7);
\draw (1,6) rectangle (2,7);

\draw (0,5) rectangle (1,6);
\draw (1,5) rectangle (2,6);

\draw (0,4) rectangle (1,5);
\draw (1,4) rectangle (2,5);

\draw (3,5) rectangle (4,6);

\draw (5,5) rectangle (6,6);

\draw (6,5) rectangle (7,6);

\filldraw (4.5,7.5) circle (.5ex);
\draw[line width = .2ex] (4.5,0) -- (4.5,7.5) -- (8,7.5);
\filldraw (2.5,6.5) circle (.5ex);
\draw[line width = .2ex] (2.5,0) -- (2.5,6.5) -- (8,6.5);
\filldraw (7.5,5.5) circle (.5ex);
\draw[line width = .2ex] (7.5,0) -- (7.5,5.5) -- (8,5.5);
\filldraw (3.5,4.5) circle (.5ex);
\draw[line width = .2ex] (3.5,0) -- (3.5,4.5) -- (8,4.5);
\filldraw (0.5,3.5) circle (.5ex);
\draw[line width = .2ex] (0.5,0) -- (0.5,3.5) -- (8,3.5);
\filldraw (1.5,2.5) circle (.5ex);
\draw[line width = .2ex] (1.5,0) -- (1.5,2.5) -- (8,2.5);
\filldraw (5.5,1.5) circle (.5ex);
\draw[line width = .2ex] (5.5,0) -- (5.5,1.5) -- (8,1.5);
\filldraw (6.5,0.5) circle (.5ex);
\draw[line width = .2ex] (6.5,0) -- (6.5,0.5) -- (8,0.5);
\end{tikzpicture}
\]
Here, ${\sf code}(w)=(4,2,5,2)$.
\end{example}

Consider the monomial expansion
\[{\mathfrak S}_w=\sum_{\alpha\in\mathbb{Z}_{\geq0}^n}c_{\alpha,w}x^{\alpha}.\]
Now, $c_{\alpha,w}=0$ unless $\alpha_i=0$ for $i>L$, and moreover, $c_{\alpha,w}\in {\mathbb Z}_{\geq 0}$.
Let {\sf Schubert} be the problem of deciding $c_{\alpha,w}\neq 0$,
as measured in the input size of $\alpha$ and $w$ (under the assumption that arithmetic operations take constant time).
The {\tt INPUT} is ${\sf code}=(c_1,\ldots c_L)\in {\mathbb Z}_{\geq 0}^L$ with $c_L>0$
and $\alpha\in {\mathbb Z}_{\geq 0}^L$. ${\sf Schubert}$ returns {\tt YES} if $c_{\alpha,w}>0$ and {\tt NO} otherwise.

\begin{theorem}
\label{thm:SchubertinP}
${\sf Schubert}\in {\sf P}$.
\end{theorem}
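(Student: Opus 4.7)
The plan is to combine the \emph{saturated Newton polytope} theorem of Fink--M\'esz\'aros--St.~Dizier, which identifies the support $\{\alpha:c_{\alpha,w}>0\}$ with the integer points of the Schubitope $\mathcal{S}_{D(w)}$, with a new polynomial-time membership test for $\mathcal{S}_{D(w)}$. This reduces \textsf{Schubert} to the question: given $D(w)$ (readable from $\textsf{code}(w)$ in polynomial time) and $\alpha$, decide whether $\alpha\in\mathcal{S}_{D(w)}\cap\mathbb{Z}^n$. The obstacle to a direct approach is that the Schubitope is presented by an exponential family of inequalities $\sum_{i\in S}\alpha_i\le\theta_{D(w)}(S)$ indexed by column subsets $S\subseteq[n]$, so naive inequality-by-inequality testing is not in \textsf{P}.

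My first step is to prove a combinatorial reformulation of Schubitope membership: $\alpha\in\mathcal{S}_{D(w)}$ iff there exists a ``perfect tableau'' \tabFCI\ of $D(w)$ with row-content $\alpha$, i.e.\ a filling satisfying local column-admissibility conditions tailored to encode the column-sweep statistic $\theta_{D(w)}$. The forward direction --- that any perfect tableau certifies every $\theta$-inequality --- should follow by unpacking the definition of $\theta_D$ column by column. The reverse direction, constructing a filling from an abstract lattice point of $\mathcal{S}_{D(w)}$, is the technical core, and I expect to obtain it by a local exchange/uncrossing argument that builds a valid filling one column at a time while maintaining feasibility.

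The second and main step is to decide existence of a perfect tableau efficiently, via a canonical greedy representative \tabSort. The algorithm would sweep $D(w)$ in a fixed order (say column-by-column, top to bottom) and, at each cell, insert the numerically smallest (or largest) label consistent with the running content $\alpha$ and the column-admissibility constraints. I would prove a swap lemma showing that any perfect tableau can be transformed into \tabSort\ without ever violating admissibility; consequently the greedy sweep succeeds on input $\alpha$ iff the set of perfect tableaux of content $\alpha$ is nonempty, iff $\alpha\in\mathcal{S}_{D(w)}$. The main difficulty I anticipate is precisely the correctness of this greedy choice: one must rule out the possibility that a locally greedy placement in an early column preempts a globally feasible completion in a later column. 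The swap lemma is what overcomes this, and establishing it carefully will be the hardest part of the argument.

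Finally, combining these pieces gives the complexity bound: $|D(w)|\le\binom{n}{2}$, each cell requires $O(n)$ bookkeeping in the greedy sweep, and all intermediate quantities remain polynomially bounded in the input. Invoking the SNP reduction from Step~1 then concludes that a polynomial number of arithmetic operations suffice to decide whether $c_{\alpha,w}\ne 0$, so $\textsf{Schubert}\in\textsf{P}$.
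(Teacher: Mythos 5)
Your high-level plan matches the paper in its first two steps: reduce \textsf{Schubert} to membership in the Schubitope $\mathcal{S}_{D(w)}$ using the Fink--M\'esz\'aros--St.~Dizier saturated Newton polytope theorem, and reformulate that membership combinatorially as nonemptiness of ${\tabFCI}(D(w),\alpha)$ (this is precisely Theorem~\ref{thm:independent_characterization}). Where you diverge, however, is the algorithmic step, and there I see two genuine gaps.

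\textbf{The greedy sweep is not known (or likely) to be correct.} You propose to decide ${\tabFCI}(D(w),\alpha)\neq\emptyset$ by a column-by-column sweep placing the extremal consistent label at each cell, and to justify it by a ``swap lemma'' showing any perfect tableau can be sorted into ${\tabSort}$. Two problems. First, membership in ${\tabSort}$ (column-increasing) is a property of the finished filling; it is not the same as the claim that a single-pass greedy label assignment terminates successfully. The paper's Remark~\ref{rmk:injective_iff_increasing} does establish ${\tabFCI}(D,\alpha)\neq\emptyset\iff{\tabSort}(D,\alpha)\neq\emptyset$, but it does so \emph{from} the polytope characterization, and it does not yield a greedy construction. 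Second, the underlying combinatorial problem is a degree-constrained bipartite/transportation problem: the content constraints couple cells across columns, so a locally optimal choice in an early column can strand a later column even when a global completion exists. Naive smallest- or largest-first greedy does fail on general diagrams, and you give no argument specific to Rothe diagrams. The paper sidesteps this entirely: it encodes the tableau-existence problem as feasibility of a linear program (the indicator polytope $\mathcal{P}(D,\alpha)$ of Theorem~\ref{thm:int_pt_characterization}), proves the constraint matrix is totally unimodular so that the LP relaxation $\mathcal{P}(D,\alpha)\neq\emptyset$ is equivalent to $\mathcal{P}(D,\alpha)\cap\mathbb{Z}^{n^2}\neq\emptyset$ (Theorem~\ref{thm:relaxation_equivalence}), and then invokes Khachiyan's ellipsoid method.

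\textbf{The complexity estimate does not respect the input encoding.} You bound the work by ``$|D(w)|\le\binom{n}{2}$, each cell requires $O(n)$ bookkeeping.'' But the input is $\textsf{code}(w)=(c_1,\ldots,c_L)\in\mathbb{Z}_{\geq 0}^L$, whose size (in the model used, $L$ numbers) does not bound $n$ or $\#D(w)=\sum_i c_i$: the permutation $w$ lives in $S_n$ with $n$ on the order of $L+\max_i c_i$, which can be astronomically larger than $L$. A cell-by-cell sweep visiting all of $D(w)$ is therefore not polynomial in the input length. The paper addresses this with the compression construction of Section~\ref{subsctn:polytopes}: it observes that the columns of a Rothe diagram fall into at most $2L$ identical classes (Lemma~\ref{prop:colTypeRothe}), replaces $\mathcal{P}(D,\alpha)\subseteq\mathbb{R}^{n^2}$ by the compressed polytope $\mathcal{Q}(D,\mathcal{C},\tilde\alpha)\subseteq\mathbb{R}^{m\ell}$ with $m,\ell\leq O(L)$ (Theorem~\ref{thm:compression_equivalence}), and shows in Section~\ref{sec:inPsec} that the compression data are computable in $O(L^2)$ time from $\textsf{code}(w)$. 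Without something playing this role, your argument cannot produce a polynomial-time algorithm even if the greedy step were fixed.
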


We prove Theorem~\ref{thm:SchubertinP} using another result. Fix $n \in \mathbb{Z}_{>0}$ and let $D \subseteq [n]^2$. We call $D$ a \emph{diagram} and visualize $D$ as a subset of an $n \times n$ grid of boxes, oriented so that $(r,c) \in [n]^2$ represents the box in the $r$th row from the top and the $c$th column from the left. Let ${\tabFCI}(D,\alpha)$ 
be the fillings of $D$ with $\alpha_k$ many $k$'s, where entries 
in each column are distinct, any entry in row $i$ is $\leq i$, and each box contains exactly one entry. Let 
${\tabSort}(D,\alpha) \subseteq {\tabFCI}(D,\alpha)$ 
be fillings where entries in each column increase from top to bottom.

\begin{theorem}
\label{thm:Advetableau}
$c_{\alpha,w}>0\iff {\tabFCI}(D(w),\alpha)\neq \emptyset 
\iff {\tabSort}(D(w),\alpha)\neq \emptyset$
\end{theorem}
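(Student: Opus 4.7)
The plan is to prove the two equivalences in sequence: first the easier equivalence between $\tabFCI$ and $\tabSort$ by a column-sorting argument, then the main equivalence with nonvanishing by reducing to a known characterization of $\operatorname{supp}(\mathfrak{S}_w)$ via the Schubitope.

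For the sort step, given $T \in \tabFCI(D(w), \alpha)$, I would sort the entries within each column in increasing order. In a column with boxes at rows $r_1 < \cdots < r_m$ and distinct entries $e_1, \ldots, e_m$ satisfying $e_j \leq r_j$, the set $\{e_1, \ldots, e_j\}$ consists of $j$ distinct values all at most $r_j$, so at least $j$ of the $m$ entries are $\leq r_j$, whence the $j$-th smallest entry (which lands in row $r_j$ after sorting) is also $\leq r_j$. The flag condition survives sorting, the result lies in $\tabSort(D(w), \alpha)$, and the reverse containment is immediate since $\tabSort \subseteq \tabFCI$ by definition.

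For the main equivalence $c_{\alpha,w} > 0 \iff \tabFCI(D(w), \alpha) \neq \emptyset$, I would invoke the Fink--M\'esz\'aros--St.\ Dizier theorem identifying $\operatorname{supp}(\mathfrak{S}_w)$ with the integer points of the Schubitope $\mathcal{S}_{D(w)}$, so the task becomes proving that $\alpha \in \mathcal{S}_{D(w)} \iff \tabFCI(D(w),\alpha) \neq \emptyset$. The direction $\tabFCI(D(w),\alpha) \neq \emptyset \implies \alpha \in \mathcal{S}_{D(w)}$ is handled by taking a filling and verifying each Schubitope inequality: each is indexed by a subset $S \subseteq [n]$ of rows and can be read off from the filling by aggregating, column by column, the contributions of boxes whose row lies in $S$, mediated by the bracket-matching rule that defines the Schubitope statistic.

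The harder direction, $\alpha \in \mathcal{S}_{D(w)} \implies \tabFCI(D(w),\alpha) \neq \emptyset$, is a feasibility question for a bipartite assignment problem: assign each box of $D(w)$ an entry from $[n]$ with supply $\alpha_k$ copies of value $k$, subject to column-distinctness and the flag constraint that the entry in row $r$ is at most $r$. I would apply Hall's marriage theorem, or equivalently a max-flow/min-cut argument on the natural bipartite graph of boxes versus value-slots, to reduce feasibility to a system of cut inequalities. The crux, and the main obstacle, is to show that these cut inequalities are equivalent to the defining inequalities of the Schubitope. This amounts to translating the parenthesis-word statistic that defines $\mathcal{S}_D$ into the cut count arising in the matching argument, and is precisely the new characterization of the Schubitope announced in the abstract. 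Once in hand, it both completes the proof and—by being checkable in polynomial time via classical matching algorithms—yields Theorem~\ref{thm:SchubertinP}.
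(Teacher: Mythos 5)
Your column-sorting argument for $\tabFCI(D(w),\alpha)\neq\emptyset \iff \tabSort(D(w),\alpha)\neq\emptyset$ is correct and a bit more elementary than the paper's route (the paper deduces it from the polytope $\mathcal{P}(D,\alpha)$ via Remark~\ref{rmk:injective_iff_increasing}). You also correctly identify the reduction via Fink--M\'esz\'aros--St.~Dizier to the statement $\alpha\in\mathcal{S}_{D(w)}\iff \tabFCI(D(w),\alpha)\neq\emptyset$.

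The problem is that this last biconditional is the whole content of the theorem, and you do not actually prove it. You acknowledge this yourself: for the direction $\alpha\in\mathcal{S}_{D(w)}\implies\tabFCI(D(w),\alpha)\neq\emptyset$ you write that translating the parenthesis-word statistic into Hall/min-cut obstructions ``is precisely the new characterization of the Schubitope announced in the abstract,'' and leave it unsupplied. That is a gap, not a step. The paper proves it as Theorem~\ref{thm:independent_characterization} by a fairly involved induction on $\sum_{(i,j)\in D}i$, with case analysis according to whether $D$ has boxes below the largest row index $m$ with $\alpha_m>0$ and whether some nontrivial Schubitope inequality is tight; a Hall/max-flow route is not out of the question (it is spiritually close to the indicator polytope $\mathcal{P}(D,\alpha)$ and total unimodularity argument in Section~\ref{subsctn:polytopes}), but the deficiency-to-Schubitope translation you would need is exactly where all the work is. You also undersell the ``easy'' direction: showing $\tabFCI(D(w),\alpha)\neq\emptyset\implies\alpha\in\mathcal{S}_{D(w)}$ requires knowing that \emph{every} flagged column-injective filling $\tau$ satisfies $\#\tau^{-1}(S)\le\theta_D(S)$ for every $S\subseteq[n]$, which is not a matter of ``reading off'' the inequality from the filling; it uses the greedy tableau $\pi_{D,S}$ of Propositions~\ref{prop:theta_equivalence}--\ref{prop:pi_optimality} and the fact that $\pi_{D,S}$ maximizes $\#\tau^{-1}(S)$ among all FCI tableaux.
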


In general $\#{\tabFCI}(D(w),\alpha)\neq c_{\alpha,w}$ but rather 
$\#{\tabFCI}(D(w),\alpha)\geq c_{\alpha,w}$ (cf.~\cite{FGRS}). 

\begin{example}
Here are the tableaux in  $\bigcup_{\alpha} {\tabSort}(D(31524),\alpha)$:
\begin{center}
\begin{tikzpicture}[scale=.4]
\draw (0,0) rectangle (5,5);
\draw (0,4) rectangle (1,5) node[pos=.5] {$1$};
\draw (1,4) rectangle (2,5) node[pos=.5] {$1$};
\draw (1,2) rectangle (2,3) node[pos=.5] {$2$};
\draw (3,2) rectangle (4,3) node[pos=.5] {$2$};
\filldraw (2.5,4.5) circle (.5ex);
\draw[line width = .2ex] (2.5,0) -- (2.5,4.5) -- (5,4.5);
\filldraw (0.5,3.5) circle (.5ex);
\draw[line width = .2ex] (0.5,0) -- (0.5,3.5) -- (5,3.5);
\filldraw (4.5,2.5) circle (.5ex);
\draw[line width = .2ex] (4.5,0) -- (4.5,2.5) -- (5,2.5);
\filldraw (1.5,1.5) circle (.5ex);
\draw[line width = .2ex] (1.5,0) -- (1.5,1.5) -- (5,1.5);
\filldraw (3.5,0.5) circle (.5ex);
\draw[line width = .2ex] (3.5,0) -- (3.5,0.5) -- (5,0.5);

\end{tikzpicture}\qquad
\begin{tikzpicture}[scale=.4]
\draw (0,0) rectangle (5,5);
\draw (0,4) rectangle (1,5) node[pos=.5] {$1$};
\draw (1,4) rectangle (2,5) node[pos=.5] {$1$};
\draw (1,2) rectangle (2,3) node[pos=.5] {$2$};
\draw (3,2) rectangle (4,3) node[pos=.5] {$1$};
\filldraw (2.5,4.5) circle (.5ex);
\draw[line width = .2ex] (2.5,0) -- (2.5,4.5) -- (5,4.5);
\filldraw (0.5,3.5) circle (.5ex);
\draw[line width = .2ex] (0.5,0) -- (0.5,3.5) -- (5,3.5);
\filldraw (4.5,2.5) circle (.5ex);
\draw[line width = .2ex] (4.5,0) -- (4.5,2.5) -- (5,2.5);
\filldraw (1.5,1.5) circle (.5ex);
\draw[line width = .2ex] (1.5,0) -- (1.5,1.5) -- (5,1.5);
\filldraw (3.5,0.5) circle (.5ex);
\draw[line width = .2ex] (3.5,0) -- (3.5,0.5) -- (5,0.5);
\end{tikzpicture}\qquad
\begin{tikzpicture}[scale=.4]
\draw (0,0) rectangle (5,5);

\draw (0,4) rectangle (1,5) node[pos=.5] {$1$};
\draw (1,4) rectangle (2,5) node[pos=.5] {$1$};

\draw (1,2) rectangle (2,3) node[pos=.5] {$3$};

\draw (3,2) rectangle (4,3) node[pos=.5] {$1$};
\filldraw (2.5,4.5) circle (.5ex);
\draw[line width = .2ex] (2.5,0) -- (2.5,4.5) -- (5,4.5);
\filldraw (0.5,3.5) circle (.5ex);
\draw[line width = .2ex] (0.5,0) -- (0.5,3.5) -- (5,3.5);
\filldraw (4.5,2.5) circle (.5ex);
\draw[line width = .2ex] (4.5,0) -- (4.5,2.5) -- (5,2.5);
\filldraw (1.5,1.5) circle (.5ex);
\draw[line width = .2ex] (1.5,0) -- (1.5,1.5) -- (5,1.5);
\filldraw (3.5,0.5) circle (.5ex);
\draw[line width = .2ex] (3.5,0) -- (3.5,0.5) -- (5,0.5);
\end{tikzpicture}

\vspace*{0.5cm}
\begin{tikzpicture}[scale=.4]

\draw (0,0) rectangle (5,5);
\draw (0,4) rectangle (1,5) node[pos=.5] {$1$};
\draw (1,4) rectangle (2,5) node[pos=.5] {$1$};
\draw (1,2) rectangle (2,3) node[pos=.5] {$2$};
\draw (3,2) rectangle (4,3) node[pos=.5] {$3$};
\filldraw (2.5,4.5) circle (.5ex);
\draw[line width = .2ex] (2.5,0) -- (2.5,4.5) -- (5,4.5);
\filldraw (0.5,3.5) circle (.5ex);
\draw[line width = .2ex] (0.5,0) -- (0.5,3.5) -- (5,3.5);
\filldraw (4.5,2.5) circle (.5ex);
\draw[line width = .2ex] (4.5,0) -- (4.5,2.5) -- (5,2.5);
\filldraw (1.5,1.5) circle (.5ex);
\draw[line width = .2ex] (1.5,0) -- (1.5,1.5) -- (5,1.5);
\filldraw (3.5,0.5) circle (.5ex);
\draw[line width = .2ex] (3.5,0) -- (3.5,0.5) -- (5,0.5);
\end{tikzpicture}\qquad
\begin{tikzpicture}[scale=.4]

\draw (0,0) rectangle (5,5);

\draw (0,4) rectangle (1,5) node[pos=.5] {$1$};
\draw (1,4) rectangle (2,5) node[pos=.5] {$1$};
\draw (1,2) rectangle (2,3) node[pos=.5] {$3$};
\draw (3,2) rectangle (4,3) node[pos=.5] {$2$};

\filldraw (2.5,4.5) circle (.5ex);
\draw[line width = .2ex] (2.5,0) -- (2.5,4.5) -- (5,4.5);
\filldraw (0.5,3.5) circle (.5ex);
\draw[line width = .2ex] (0.5,0) -- (0.5,3.5) -- (5,3.5);
\filldraw (4.5,2.5) circle (.5ex);
\draw[line width = .2ex] (4.5,0) -- (4.5,2.5) -- (5,2.5);
\filldraw (1.5,1.5) circle (.5ex);
\draw[line width = .2ex] (1.5,0) -- (1.5,1.5) -- (5,1.5);
\filldraw (3.5,0.5) circle (.5ex);
\draw[line width = .2ex] (3.5,0) -- (3.5,0.5) -- (5,0.5);
\end{tikzpicture}\qquad
\begin{tikzpicture}[scale=.4]

\draw (0,0) rectangle (5,5);

\draw (0,4) rectangle (1,5) node[pos=.5] {$1$};
\draw (1,4) rectangle (2,5) node[pos=.5] {$1$};

\draw (1,2) rectangle (2,3) node[pos=.5] {$3$};

\draw (3,2) rectangle (4,3) node[pos=.5] {$3$};

\filldraw (2.5,4.5) circle (.5ex);
\draw[line width = .2ex] (2.5,0) -- (2.5,4.5) -- (5,4.5);
\filldraw (0.5,3.5) circle (.5ex);
\draw[line width = .2ex] (0.5,0) -- (0.5,3.5) -- (5,3.5);
\filldraw (4.5,2.5) circle (.5ex);
\draw[line width = .2ex] (4.5,0) -- (4.5,2.5) -- (5,2.5);
\filldraw (1.5,1.5) circle (.5ex);
\draw[line width = .2ex] (1.5,0) -- (1.5,1.5) -- (5,1.5);
\filldraw (3.5,0.5) circle (.5ex);
\draw[line width = .2ex] (3.5,0) -- (3.5,0.5) -- (5,0.5);
\end{tikzpicture}
\end{center}

Hence, for instance, $c_{(2,1,1),31524}>0$ but $c_{(4),31524}=0$.
\end{example}

To prove Theorems~\ref{thm:SchubertinP} and~\ref{thm:Advetableau} we establish results about the \emph{Schubitope} \cite{MTY}. 
This polytope ${\mathcal S}_D$ is defined with a halfspace description for any
$D\subseteq [n]^2$. We prove (Theorem~\ref{thm:independent_characterization}) that a lattice point $\alpha$ is in ${\mathcal S}_D$ if and only if ${\tabFCI}(D,\alpha)\neq {\emptyset}$ where $D$ is any diagram. 

We then introduce the \emph{\polyName} ${\mathcal P}(D,\alpha)$ whose lattice points ${\mathcal P}(D,\alpha)_{\mathbb Z}$ are in bijection with 
${\tabFCI}(D,\alpha)$. We prove  that ${\mathcal P}(D,\alpha)\neq \emptyset\iff {\mathcal P}(D,\alpha)_{\mathbb Z}\neq \emptyset$ (Theorem~\ref{thm:relaxation_equivalence}). 
Thus determining ${\mathcal P}(D,\alpha)_{\mathbb Z}\neq \emptyset$ (and equivalently $\alpha\in {\mathcal S}_{D}$) is in {\sf P} using L.~Khachiyan's ellipsoid method for linear programming, see \cite{Schrijver}.
We give two proofs of Theorem~\ref{thm:relaxation_equivalence}. The first shows 
${\mathcal P}(D,\alpha)$ is totally unimodular. Hence ${\mathcal P}(D,\alpha)\neq\emptyset$ implies 
${\mathcal P}(D,\alpha)$
 has integral vertices. Our second proof obviates total unimodularity and is potentially adaptable to problems lacking that property. 
However, only the high-level structure of the second proof is easily generalizable --- the rest is necessarily \emph{ad hoc}.

For the case of Rothe diagrams $D=D(w)$, using A.~Fink-K.~M\'esz\'aros-A.~St.~Dizier \cite[Corollary 12 and Theorem 14]{Fink} (conjectured in \cite[Conjectures 5.1 and 5.13]{MTY}), 
\begin{equation}\label{eq:schubSchub}
\alpha \in \mathcal{S}_{D(w)} \iff c_{\alpha,w}>0.
\end{equation}
This, combined with our results on the Schubitope, proves Theorems~\ref{thm:SchubertinP} and~\ref{thm:Advetableau}.

The class $\#{\sf P}$ in L.~Valiant's complexity theory of counting problems are those that count the number of accepting paths of a nondeterministic Turing machine running in polynomial time. A problem ${\mathcal P}\in \#{\sf P}$ is \emph{complete} if for any problem ${\mathcal Q}\in \#{\sf P}$
there exists a polynomial-time counting reduction from ${\mathcal Q}$ to ${\mathcal P}$. These are the hardest of the problems in $\#{\sf P}$. There does not exist a polynomial time algorithm for such problems unless
${\sf P}={\sf NP}$. 

In contrast with Theorem~\ref{thm:SchubertinP}, we prove:
\begin{theorem}
\label{thm:second}
 Counting $c_{\alpha,w}$ is $\#{\sf P}$-{\sf complete}.\end{theorem}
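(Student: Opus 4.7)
The plan is to establish $\#{\sf P}$-hardness by parsimonious reduction from the problem of computing Kostka numbers $K_{\lambda,\mu}$, which H.~Narayanan proved to be $\#{\sf P}$-complete. The bridge is the classical identification of Schubert polynomials at Grassmannian permutations with Schur polynomials.

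Given a partition $\lambda = (\lambda_1 \geq \cdots \geq \lambda_k > 0)$ and a composition $\mu \in {\mathbb Z}_{\geq 0}^k$ with $|\mu| = |\lambda|$, I would construct the Grassmannian permutation $w_\lambda \in S_\infty$ with unique descent at position $k$, defined by $w_\lambda(i) = i + \lambda_{k+1-i}$ for $1 \leq i \leq k$. A direct check (illustrated by $\lambda=(2,1)$, giving $w_\lambda = 2\,4\,1\,3\,5\,\ldots$ with code $(1,2)$) shows that ${\sf code}(w_\lambda) = (\lambda_k, \lambda_{k-1}, \ldots, \lambda_1)$, so the {\sf Schubert} instance has size polynomial in the Kostka input. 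It is classical that
\[
{\mathfrak S}_{w_\lambda}(x_1, x_2, \ldots) = s_\lambda(x_1, \ldots, x_k) = \sum_{\nu \in {\mathbb Z}_{\geq 0}^k} K_{\lambda, \nu}\, x^\nu,
\]
so setting $\alpha := \mu$ yields $c_{\alpha, w_\lambda} = K_{\lambda, \mu}$ exactly. Membership in $\#{\sf P}$ is then supplied by any positive combinatorial formula for $c_{\alpha, w}$; for example, the Billey--Jockusch--Stanley / Fomin--Kirillov pipe dream formula presents $c_{\alpha,w}$ as the cardinality of a polynomial-time verifiable family of subsets of $[n]^2$ (one checks that the row reading word is a reduced expression for $w$ and that the column weights equal $\alpha$).

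The main obstacle is bookkeeping rather than a substantive mathematical hurdle. Narayanan states his hardness in the Turing model, while the input convention in this paper uses unit-cost arithmetic; since the code length and entry magnitudes our reduction produces are polynomially bounded in the Kostka input size, $\#{\sf P}$-hardness transfers between the two models. A more self-contained alternative would be a direct reduction from counting $0/1$-contingency tables, perhaps exploiting the tableau characterization in Theorem~\ref{thm:Advetableau}, but routing through Kostka numbers is by far the shortest path.
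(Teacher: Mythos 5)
Your hardness reduction is exactly the paper's: you build the Grassmannian permutation $w_\lambda$ with ${\sf code}(w_\lambda)=(\lambda_L,\ldots,\lambda_1)$, observe ${\mathfrak S}_{w_\lambda}=s_\lambda$, conclude $c_{\alpha,w_\lambda}=K_{\lambda,\alpha}$, and invoke Narayanan's $\#\mathsf{P}$-hardness of Kostka numbers. That part is fine and matches Section~\ref{sec:schubinsharpPcomp}.

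The gap is in your membership argument. You invoke the pipe dream / BJS formula as witnesses, but this fails under the input convention fixed in this paper, which is $({\sf code}(w),\alpha)$ rather than $(w,\alpha)$. The paper states this explicitly in the introduction: ``The rule of \cite{BJS} establishes that counting $c_{\alpha,w}$ is in $\#{\sf P}$ if the input is $(w,\alpha)$ but not if the input is $({\sf code}(w),\alpha)$.'' Concretely, ${\sf code}(w)=(c_1,\ldots,c_L)$ has $L$ entries, while a pipe dream for $w$ lives in $[n]^2$ with $n$ potentially exponentially larger than $L$ (and carries $\ell(w)=\sum c_i$ crossings, likewise potentially exponential in the input size); so pipe dreams are not polynomial-size certificates for this input, and no amount of unit-cost bookkeeping fixes this. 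Indeed, Remark~\ref{remark:July17} quantifies this: for $\lambda=\alpha=(2^L,\ldots,2^L)$ one has $w_\lambda\in S_{L+2^L}$, and running through one-line notation would blow up the Kostka reduction. This mismatch is not ``bookkeeping''; it is the reason Section~\ref{sec:schubinsharpP} exists. The paper instead proves membership (Theorem~\ref{prop:schubinsharpP}) by iterating the Lascoux--Sch\"utzenberger transition recurrence in graphical form, encoding a transition path $S$ compactly (with multi-deletion moves contracted) and a flagged tableau $T$ of vexillary shape by its $L\times L$ row-content matrix $R(T)$; Propositions~\ref{prop:lengthWitPoly}, \ref{prop:TabPoly}, and \ref{prop:NPwitnessPoly} show that $(S,R(T))$ is an $L^{O(1)}$-size certificate that is verifiable in $L^{O(1)}$ time, and Proposition~\ref{lemma:transbasic} shows $c_{\alpha,w}=\#X_{\alpha,w}$. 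You need something of this sort --- a witness whose size and verification time are controlled by $L$, not $n$ --- to complete the membership half.
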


Given $\{c_{\alpha,w}\in {\mathbb Z}_{\geq 0} \}$ it is standard to ask for a counting rule for $c_{\alpha,w}$.
A complexity motivation is an \emph{appropriate} rule that establishes a counting problem is in $\#{\sf P}$ with respect to given input (length).
The rule of \cite{BJS} establishes that counting $c_{\alpha,w}$ is in $\#{\sf P}$ if the input is 
$(w,\alpha)$ but not if the input is
$({\sf code}(w),\alpha)$. For the latter input assumption, we use the transition algorithm of
\cite{LS:transition} and its \emph{graphical} reformulation from \cite{Knutson.Yong}. This allows us
to give a polynomial time counting reduction to the $\#{\sf P}$-{\sf complete} 
problem of counting Kostka coefficients
\cite{Narayanan}, (see Section \ref{sec:schubinsharpP}).

Related discussion of complexity and the ``nonvanishing problem'' in algebraic combinatorics appears 
in the conference proceedings \cite{FPSAC} and preprint \cite{earlierversion} versions of this paper.

\section{The Schubitope}\label{sec:SchubChar}
Consider a diagram $D\subseteq[n]^2$.
	Given $S \subseteq [n]$ and a column $c \in [n]$, construct a string denoted ${\sf word}_{c,S}(D)$ by reading column $c$ from top to bottom and recording
	\begin{itemize}
		\item $($ if $(r,c) \not\in D$ and $r \in S$,
		\item $)$ if $(r,c) \in D$ and $r \not\in S$, and
		\item $\star$ if $(r,c) \in D$ and $r \in S$.
	\end{itemize}

	Let $\theta_D^c(S) = \#\{\star\text{'s in } {\sf word}_{c,S}(D)\} + \#\{\text{paired } ()\text{'s in } {\sf word}_{c,S}(D)\}$ and
	\begin{align*}
	\theta_D(S) = \sum_{c=1}^{n} \theta_D^c(S).
	\end{align*}
	
\begin{example}\label{ex:schubWord} In the diagram $D$ below, we labelled the corresponding strings for ${\sf word}_{c,S}(D)$ for $S=\{1,3\}$. For instance, we see ${\sf word}_{5,\{1,3\}}(D)=(\star)$.
\begin{center}
\begin{tikzpicture}
\node (root) {\begin{tikzpicture}[scale=.4]
\draw (0,0) rectangle (5,5);

\draw (0,4) rectangle (1,5);

\draw (1,3) rectangle (2,4);

\draw (1,2) rectangle (2,3);
\draw (4,2) rectangle (5,3);

\draw (0,1) rectangle (1,2);
\draw (1,1) rectangle (2,2);
\draw (2,1) rectangle (3,2);
\draw (4,1) rectangle (5,2);

\draw (1,0) rectangle (2,1);
\end{tikzpicture}};

\node[right=1 of root] (temp) {\begin{tikzpicture}[scale=.4]
\draw (0,0) rectangle (5,5);

\draw (0,4) rectangle (1,5);

\draw (1,3) rectangle (2,4);

\draw (1,2) rectangle (2,3);
\draw (4,2) rectangle (5,3);

\draw (0,1) rectangle (1,2);
\draw (1,1) rectangle (2,2);
\draw (2,1) rectangle (3,2);
\draw (4,1) rectangle (5,2);

\draw (0,0) rectangle (1,1);
\draw (1,0) rectangle (2,1);

\node at (0,4.5) {\tiny$\star$};
\node at (1,4.5) {\tiny$($};
\node at (2,4.5) {\tiny$($};
\node at (3,4.5) {\tiny$($};
\node at (4,4.5) {\tiny$($};

\node at (0,2.5) {\tiny$($};
\node at (1,2.5) {\tiny$\star$};
\node at (2,2.5) {\tiny$($};
\node at (3,2.5) {\tiny$($};
\node at (4,2.5) {\tiny$\star$};

\node at (1,3.5) {\tiny$)$};

\node at (0,1.5) {\tiny$)$};
\node at (1,1.5) {\tiny$)$};
\node at (2,1.5) {\tiny$)$};
\node at (4,1.5) {\tiny$)$};

\node at (0,0.5) {\tiny$)$};
\node at (1,0.5) {\tiny$)$};

\end{tikzpicture}};

\draw [->,
line join=round,
decorate, decoration={
    zigzag,
    segment length=9,
    amplitude=2,post=lineto,
    post length=2pt
}] (root) -- (temp);
\path (temp);

\end{tikzpicture}
\end{center}
\end{example}

	The \emph{Schubitope} $\mathcal{S}_D$, as defined in \cite{MTY}, is the polytope
	\begin{align}\label{eqn:ogschubitope}
	\left\{(\alpha_1,\dots,\alpha_n) \in \mathbb{R}_{\geq 0}^n : \alpha_1 + \dots + \alpha_n = \#D \text{ and } \sum_{i \in S} \alpha_i \leq \theta_D(S) \text{ for all } S \subseteq [n]\right\}.
	\end{align}
	\subsection{Characterizations via tableaux}
	
	A \emph{tableau} of \emph{shape} $D$ is a map 
\[\tau : D \rightarrow [n] \cup \{\nolabel\},\] 
where $\tau(r,c) = \nolabel$ indicates that the box $(r,c)$ is unlabelled. Let ${\sf Tab}(D)$ denote the set of such tableaux.
	
	It will be useful to reformulate the original definition of $\theta_D(S)$ into the language of tableaux. Given $S \subseteq [n]$, define $\pi_{D,S} \in {\sf Tab}(D)$ by
	\begin{align}\label{eqn:pi_definition}
	\pi_{D,S}(r,c) =
	\begin{cases}
	r &\text{if } (r,c) \text{ contributes a ``$\star$''  to } {\sf word}_{c,S}(D), \\
	s &\mbox{if } (r,c) \text{ contributes a  ``)'' to } {\sf word}_{c,S}(D) \mbox{ which is} \\
	&\text{paired with an ``('' from } (s,c), \\
	\nolabel &\mbox{otherwise}.
	\end{cases}
	\end{align}
	In (\ref{eqn:pi_definition}) and throughout, we pair by the standard ``inside-out'' convention.
\begin{example} \label{ex:schubTab} Continuing Example \ref{ex:schubWord}, below is $\pi_{D,\{1,3\}}(D)$
\[\begin{tikzpicture}[scale=.4]
\draw (0,0) rectangle (5,5);

\draw (0,4) rectangle (1,5)  node[pos=.5] {$1$};

\draw (1,3) rectangle (2,4)  node[pos=.5] {$1$};

\draw (1,2) rectangle (2,3)  node[pos=.5] {$3$};
\draw (4,2) rectangle (5,3)  node[pos=.5] {$3$};

\draw (0,1) rectangle (1,2)  node[pos=.5] {$3$};
\draw (1,1) rectangle (2,2)  node[pos=.5] {$\circ$};
\draw (2,1) rectangle (3,2)  node[pos=.5] {$3$};
\draw (4,1) rectangle (5,2)  node[pos=.5] {$1$};

\draw (1,0) rectangle (2,1)  node[pos=.5] {$\circ$};

\draw (0,0) rectangle (1,1)  node[pos=.5] {$\circ$};

\end{tikzpicture}\]
\end{example}	
	\begin{proposition}\label{prop:theta_equivalence}
		For all $D \subseteq [n]^2$ and $S \subseteq [n]$, we have $\theta_D(S) = \#\pi_{D,S}^{-1}(S)$.
	\end{proposition}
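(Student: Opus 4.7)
The strategy is to unpack the definitions column by column and observe that the three types of contributions to $\pi_{D,S}$ (``$\star$'', paired ``$)$'', or $\nolabel$) are precisely matched by the three types of entries in ${\sf word}_{c,S}(D)$ restricted to boxes of $D$.

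First I would fix a column $c$ and analyze, for each box $(r,c) \in D$, when $\pi_{D,S}(r,c) \in S$. By (\ref{eqn:pi_definition}), there are three cases. If $(r,c)$ contributes a ``$\star$'' to ${\sf word}_{c,S}(D)$, then $\pi_{D,S}(r,c) = r$, and the defining condition for a ``$\star$'' forces $r \in S$; so every $\star$-box contributes to $\pi_{D,S}^{-1}(S)$. If $(r,c)$ contributes a ``$)$'' paired with ``$($'' at $(s,c)$, then $\pi_{D,S}(r,c) = s$, and the defining condition for a ``$($'' forces $s \in S$; so every paired ``$)$''-box contributes to $\pi_{D,S}^{-1}(S)$. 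In all remaining cases, namely when $(r,c) \in D$ produces an unpaired ``$)$'', the label is $\nolabel \notin S$, so no contribution is made.

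The key numerical observation is that, within a single column, the number of paired ``$)$'' symbols equals the number of matched pairs $()$, since each pair contains exactly one ``$)$''. Combining this with the case analysis gives
\begin{align*}
\#\left(\pi_{D,S}^{-1}(S) \cap (\{1,\dots,n\} \times \{c\})\right) &= \#\{\star\text{'s in } {\sf word}_{c,S}(D)\} + \#\{\text{paired } ()\text{'s in } {\sf word}_{c,S}(D)\} \\
&= \theta_D^c(S).
\end{align*}
Summing over $c \in [n]$ yields $\#\pi_{D,S}^{-1}(S) = \theta_D(S)$, as desired.

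There is no real obstacle here; the content of the proposition is bookkeeping, and the only mild subtlety is confirming that the author's convention counts ``paired $()$'s'' as matched pairs rather than as a count of parentheses (so each pair is counted once, matching the single ``$)$''-box it produces). Once that is noted, the proof reduces to reading off (\ref{eqn:pi_definition}) case by case.
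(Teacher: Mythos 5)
Your argument is the same as the paper's, just spelled out in more detail: both observe that $\pi_{D,S}(r,c) \in S$ precisely when $(r,c)$ falls into the first two cases of \eqref{eqn:pi_definition}, and then count column by column. The paper compresses all of your careful case-checking into a single sentence, but there is no difference in substance.
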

	
	\begin{proof} $\pi_{D,S}(r,c) \in S$ if and only if $(r,c)$ falls into one of the first two cases in \eqref{eqn:pi_definition}.
	\end{proof}
	
	Say $\tau \in {\sf Tab}(D)$ is \emph{flagged} if $\tau(r,c) \leq r$ whenever $\tau(r,c) \neq \nolabel$. It is \emph{column-injective} if $\tau(r,c) \neq \tau(r',c)$ whenever $r \neq r'$ and $\tau(r,c) \neq \nolabel$. 
Let ${\sf FCITab}(D) \subseteq {\sf Tab}(D)$ be the set of tableaux of shape $D$ which are flagged and column-injective.

\begin{example}
Of the tableaux of shape $D$ below, only the second and fourth are flagged, and only the third and fourth are column-injective.
\begin{center}
\begin{tikzpicture}[scale=.4]

\draw (0,0) rectangle (5,5);
\draw (0,4) rectangle (1,5) node[pos=.5] {$1$};
\draw (1,4) rectangle (2,5) node[pos=.5] {$1$};
\draw (4,3) rectangle (5,4) node[pos=.5] {$2$};
\draw (1,2) rectangle (2,3) node[pos=.5] {$5$};
\draw (3,2) rectangle (4,3) node[pos=.5] {$4$};
\draw (4,2) rectangle (5,3) node[pos=.5] {$\circ$};

\draw (1,1) rectangle (2,2) node[pos=.5] {$2$};

\draw (3,0) rectangle (4,1) node[pos=.5] {$4$};
\end{tikzpicture}\qquad
\begin{tikzpicture}[scale=.4]

\draw (0,0) rectangle (5,5);
\draw (0,4) rectangle (1,5) node[pos=.5] {$1$};
\draw (1,4) rectangle (2,5) node[pos=.5] {$1$};
\draw (4,3) rectangle (5,4) node[pos=.5] {$2$};
\draw (1,2) rectangle (2,3) node[pos=.5] {$3$};
\draw (3,2) rectangle (4,3) node[pos=.5] {$2$};
\draw (4,2) rectangle (5,3) node[pos=.5] {$\circ$};
\draw (1,1) rectangle (2,2) node[pos=.5] {$2$};

\draw (3,0) rectangle (4,1) node[pos=.5] {$2$};

\end{tikzpicture}\qquad
\begin{tikzpicture}[scale=.4]

\draw (0,0) rectangle (5,5);

\draw (0,4) rectangle (1,5) node[pos=.5] {$1$};
\draw (1,4) rectangle (2,5) node[pos=.5] {$1$};

\draw (4,3) rectangle (5,4) node[pos=.5] {$2$};
\draw (1,2) rectangle (2,3) node[pos=.5] {$5$};
\draw (3,2) rectangle (4,3) node[pos=.5] {$4$};
\draw (4,2) rectangle (5,3) node[pos=.5] {$\circ$};
\draw (1,1) rectangle (2,2) node[pos=.5] {$\circ$};

\draw (3,0) rectangle (4,1) node[pos=.5] {$3$};

\end{tikzpicture}
\qquad
\begin{tikzpicture}[scale=.4]

\draw (0,0) rectangle (5,5);

\draw (0,4) rectangle (1,5) node[pos=.5] {$1$};
\draw (1,4) rectangle (2,5) node[pos=.5] {$1$};

\draw (4,3) rectangle (5,4) node[pos=.5] {$\circ$};

\draw (1,2) rectangle (2,3) node[pos=.5] {$3$};
\draw (3,2) rectangle (4,3) node[pos=.5] {$3$};
\draw (4,2) rectangle (5,3) node[pos=.5] {$\circ$};

\draw (1,1) rectangle (2,2) node[pos=.5] {$2$};

\draw (3,0) rectangle (4,1) node[pos=.5] {$4$};
\end{tikzpicture}
\end{center}
\end{example}	
	\begin{proposition}\label{prop:pi_in_FCITab}
		$\pi_{D,S} \in {\sf FCITab}(D)$ for all $D \subseteq [n]^2$ and $S \subseteq [n]$.
	\end{proposition}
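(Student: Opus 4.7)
The plan is to verify the two defining properties of ${\sf FCITab}(D)$---flaggedness and column-injectivity---by a direct case analysis on the definition \eqref{eqn:pi_definition} of $\pi_{D,S}$.

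For flaggedness, I would note that there are only two cases where $\pi_{D,S}(r,c) \neq \nolabel$. In the $\star$-case, $\pi_{D,S}(r,c) = r$, so the bound $\pi_{D,S}(r,c) \leq r$ is immediate. In the paired ``$)$''-case, $\pi_{D,S}(r,c) = s$, where the ``$($'' at position $(s,c)$ pairs with the ``$)$'' at position $(r,c)$. Since ${\sf word}_{c,S}(D)$ is read top-to-bottom and ``$($'' always precedes its partner ``$)$'' in any valid parenthesis pairing, the row $s$ must appear above $r$, giving $s < r \leq r$.

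For column-injectivity, I would fix a column $c$ and classify the labelled rows of $\pi_{D,S}$ in column $c$ into two disjoint types: \emph{$\star$-rows}, where $(r,c) \in D$ and $r \in S$, receiving the label $r$; and \emph{paired-$)$-rows}, where $(r,c) \in D$ and $r \notin S$, receiving the label $s$ of the matching ``$($'' at position $(s,c)$ (so $(s,c) \notin D$ and $s \in S$). Then I would check distinctness of the labels across three subcases: (i) two $\star$-rows $r \neq r'$ clearly get distinct labels $r$ and $r'$; (ii) two paired-$)$-rows get distinct labels because the inside-out pairing is a bijection between the unpaired ``$($''s and ``$)$''s that do get matched, so distinct ``$)$''s pair with distinct ``$($''s; (iii) a $\star$-row $r$ and a paired-$)$-row $r'$ paired with ``$($'' at $(s,c)$ have labels $r$ and $s$, and these differ because $(r,c) \in D$ while $(s,c) \notin D$, so $r \neq s$.

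There is no genuine obstacle here: the entire argument is a bookkeeping exercise once the cases of \eqref{eqn:pi_definition} are unpacked. The only point where one must be slightly careful is subcase (iii), where injectivity across the two label types uses the crucial observation that ``$($'' and ``$)$'' are produced by rows lying outside and inside $D$ respectively. The proof should be only a few lines long.
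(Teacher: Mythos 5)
Your proof is correct and takes essentially the same approach as the paper, which simply asserts the result is ``immediate from \eqref{eqn:pi_definition}''; your case analysis merely spells out the bookkeeping that the paper leaves to the reader.
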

	
	\begin{proof}
		This is immediate from \eqref{eqn:pi_definition}.
	\end{proof}
	
	A simple consequence of being flagged and column-injective is the following.
	
	\begin{proposition}\label{prop:FCI_consequence}
		Let $\tau \in {\sf FCITab}(D)$. Then for all $(r,c) \in [n]^2$ and $S\subseteq [n]$, we have
		\begin{align}\label{eqn:FCI_consequence}
		\#\{(i,c) \in \tau^{-1}(S) : i < r\} \leq \#\{i \in S : i \leq r\},
		\end{align}
		with strict inequality whenever $(r,c) \in \tau^{-1}(S)$.
	\end{proposition}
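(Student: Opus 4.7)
The plan is to exhibit an injection from the set on the left of \eqref{eqn:FCI_consequence} to the set on the right, and then to show that in the ``strict'' case there is an element missed by the injection.

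More precisely, consider the map
\[
\Phi : \{(i,c) \in \tau^{-1}(S) : i \leq r\} \longrightarrow \{i \in S : i \leq r\}, \qquad \Phi(i,c) = \tau(i,c).
\]
Because $\tau$ is flagged, $\tau(i,c) \leq i \leq r$, and by hypothesis $\tau(i,c) \in S$, so $\Phi$ indeed lands in the target. Because $\tau$ is column-injective (and all inputs share the same column $c$), $\Phi$ is injective. Hence
\[
\#\{(i,c) \in \tau^{-1}(S) : i \leq r\} \leq \#\{i \in S : i \leq r\}.
\]
Dropping the constraint $i \leq r$ to $i < r$ on the left side only weakens the left hand count, giving the desired inequality \eqref{eqn:FCI_consequence}.

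For the strict inequality when $(r,c) \in \tau^{-1}(S)$, note that in that case the pair $(r,c)$ belongs to the domain of $\Phi$ but not to the smaller set $\{(i,c)\in\tau^{-1}(S):i<r\}$. Therefore
\[
\#\{(i,c) \in \tau^{-1}(S) : i < r\} + 1 \leq \#\{(i,c) \in \tau^{-1}(S) : i \leq r\} \leq \#\{i \in S : i \leq r\},
\]
which yields the strict version. There is no serious obstacle here; the proposition is essentially a restatement of the flagged and column-injective conditions packaged into a counting bound, and the only thing to be careful about is keeping track of the $\leq r$ versus $< r$ distinction between the two inequalities.
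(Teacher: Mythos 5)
Your proof is correct and follows exactly the paper's argument: you define the same map $(i,c)\mapsto\tau(i,c)$, use flaggedness for well-definedness and column-injectivity for injectivity, and obtain strictness by observing that $(r,c)$ itself lies in the larger domain but not the smaller one. There is nothing to add.
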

	
	\begin{proof}
		The map $(i,c) \mapsto \tau(i,c)$ from $\{(i,c) \in \tau^{-1}(S) : i \leq r\}$ to $\{i \in S : i \leq r\}$ is well-defined since $\tau$ is flagged. It is injective since $\tau$ is column-injective. Thus \eqref{eqn:FCI_consequence} holds, and
		\begin{align*}
		\#\{(i,c) \in \tau^{-1}(S) : i < r\} < \#\{(i,c) \in \tau^{-1}(S) : i \leq r\} \leq \#\{i \in S : i \leq r\}
		\end{align*}
		whenever $(r,c) \in \tau^{-1}(S)$, establishing the strict inequality assertion.
	\end{proof}
	
	In fact, a stronger assertion holds when $\tau = \pi_{D,S}$.
	
	\begin{proposition}\label{prop:pi_characterization}
		If $(r,c) \in D \subseteq [n]^2$ and $S \subseteq [n]$, then
		\begin{align*}
		(r,c) \in \pi_{D,S}^{-1}(S) \iff \#\{(i,c) \in \pi_{D,S}^{-1}(S) : i < r\} < \#\{i \in S : i \leq r\}.
		\end{align*}
	\end{proposition}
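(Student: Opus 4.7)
The plan is to translate both sides of the equivalence into explicit counts derived from the word ${\sf word}_{c,S}(D)$ and then check they match. Fix $c$ and $S$, and for any index $k$ let the \emph{prefix of length $k$} mean the subword built from rows $1,\dots,k$. Write $a_k$, $s_k$, $b_k$ for the numbers of ``$($'', ``$\star$'', and ``$)$'' symbols in the prefix of length $k$, and write $m_k$ for the number of matched ``$)$'' symbols in the prefix under the inside-out pairing. Note that the inside-out pairing of the prefix agrees with the restriction of the global pairing to the prefix, since the partner of a ``$)$'' is always a strictly earlier ``$($''.

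Since $(r,c)\in D$, row $r$ contributes either ``$\star$'' (if $r\in S$) or ``$)$'' (if $r\notin S$), so $a_r=a_{r-1}$. The right-hand count is then
\[
\#\{i\in S : i\leq r\} \;=\; a_r + s_r \;=\; a_{r-1} + s_r.
\]
For the left-hand count, recall from the definition \eqref{eqn:pi_definition} that $\pi_{D,S}^{-1}(S)$ consists exactly of the ``$\star$'' boxes together with the matched ``$)$'' boxes (an unmatched ``$)$'' is labelled $\nolabel$, and a matched ``$)$'' receives the row index $s\in S$ of its partner ``$($''). Hence
\[
\#\{(i,c)\in\pi_{D,S}^{-1}(S) : i<r\} \;=\; s_{r-1} + m_{r-1}.
\]
Subtracting, the strict inequality in the proposition becomes
\[
(a_{r-1}-m_{r-1}) + (s_r - s_{r-1}) \;>\; 0.
\]
I would now observe that $a_{r-1}-m_{r-1}$ is exactly $u_{r-1}$, the size of the stack of unmatched ``$($'' after processing the prefix of length $r-1$, and that $s_r-s_{r-1}=[r\in S]$ because $(r,c)\in D$. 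So the right-hand condition of the proposition is equivalent to $u_{r-1} + [r\in S] > 0$.

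Finally, compare with the left-hand condition $(r,c)\in\pi_{D,S}^{-1}(S)$, split by whether $r\in S$. If $r\in S$, then $(r,c)$ is a ``$\star$'' and so lies in $\pi_{D,S}^{-1}(S)$; on the other side, $u_{r-1}+1>0$ automatically. If $r\notin S$, then $(r,c)$ is a ``$)$'', and it lies in $\pi_{D,S}^{-1}(S)$ exactly when it is matched; by the inside-out rule this happens iff the stack just before it is nonempty, i.e., $u_{r-1}>0$, which is precisely the right-hand condition in this case. Both directions of the equivalence follow. The only delicate point, which I would state explicitly as a preliminary remark, is the compatibility of prefix matching with global matching; everything else is bookkeeping, so I do not expect a real obstacle.
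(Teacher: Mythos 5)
Your proof is correct, and it takes a genuinely different route from the paper's. The paper's argument splits into two directions: $(\Rightarrow)$ is deduced from the earlier Propositions on flagged column-injective tableaux, while $(\Leftarrow)$ uses a maximality argument — it locates the largest $s \in S$ with $s < r$ not attained by $\pi_{D,S}$ in column $c$, shows $(s,c)$ must be an unmatched ``$($'' up through row $r-1$, and then argues that this ``$($'' is forced to pair with the ``$)$'' at $(r,c)$. Your proof instead reduces both sides of the biconditional to a single quantity, the stack size $u_{r-1}$, via the bookkeeping identities $\#\{i \in S : i \leq r\} = a_{r-1}+s_r$ and $\#\{(i,c)\in \pi_{D,S}^{-1}(S): i<r\} = s_{r-1}+m_{r-1}$, together with the elementary observation that a ``$)$'' is matched exactly when the stack is nonempty. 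This is more direct and fully self-contained (it does not invoke Propositions \ref{prop:pi_in_FCITab} or \ref{prop:FCI_consequence}), at the cost of introducing the local counters $a_k, s_k, m_k$; the paper's version, by contrast, leans on already-established machinery and is stated more tersely. Both the preliminary compatibility remark (that prefix matching agrees with global matching for ``$)$'' characters) and the identity $a_{r-1}-m_{r-1}=u_{r-1}$ are indeed the delicate points you flag, and both are correct as stated.
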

	
	\begin{proof}
		($\Rightarrow$) This direction follows from Propositions \ref{prop:pi_in_FCITab} and \ref{prop:FCI_consequence}.
		
		\noindent
		($\Leftarrow$) If $r \in S$, then $(r,c)$ contributes a ``$\star$'' to ${\sf word}_{c,S}(D)$, so $\pi_{D,S}(r,c) = r \in S$, as desired. 
Thus we assume $r\not\in S$. The hypothesis combined with this assumption says
		\begin{align*}
		\#\{(i,c) \in \pi_{D,S}^{-1}(S) : i < r\} < \#\{i \in S : i \leq r\} = \#\{i \in S : i < r\}.
		\end{align*}
		Thus, there is a maximal $s \in S$ with $s < r$ such that $\pi_{D,S}(r',c) \neq s$ whenever $r' < r$. If $(s,c) \in D$, then $(s,c)$ contributes a ``$\star$'' to ${\sf word}_{c,S}(D)$, so $\pi_{D,S}(s,c) = s$, contradicting our choice of $s$. Therefore, $(s,c)$ contributes an ``$($''
to ${\sf word}_{c,S}(D)$. If this ``$($'' is paired by a ``$)$'' contributed by $(r',c) \in D$ with $r' < r$, then $\pi_{D,S}(r',c) = s$, again a contradiction. Thus, this ``$($'' pairs the ``$)$'' from $(r,c)$, so $\pi_{D,S}(r,c) = s \in S$. Hence, $(r,c) \in \pi_{D,S}^{-1}(S)$ as desired.
	\end{proof}
	
	The previous two propositions combined assert that $\{(r,c)\in \pi^{-1}_{D,S}(S)\}$ is characterized by greedy selection as one moves down each column $c$. The next
	proposition shows that this greedy algorithm maximizes $\#\tau^{-1}(S)$ among all $\tau \in {\sf FCITab}(D)$. 
	
	\begin{proposition}\label{prop:pi_optimality}
		Let $D \subseteq [n]^2$ and $S \subseteq [n]$. Then $\#\pi_{D,S}^{-1}(S) \geq \#\tau^{-1}(S)$ for all $\tau \in {\sf FCITab}(D)$.
	\end{proposition}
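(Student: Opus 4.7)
The plan is to reduce to a column-by-column statement and then verify it via a single inductive sweep down each column. Fix a column $c \in [n]$ and set
\[
R_\pi = \{r : (r,c) \in \pi_{D,S}^{-1}(S)\}, \qquad R_\tau = \{r : (r,c) \in \tau^{-1}(S)\}.
\]
Since both $\#\pi_{D,S}^{-1}(S)$ and $\#\tau^{-1}(S)$ decompose as sums over columns of the corresponding $|R_\pi|$ and $|R_\tau|$, it suffices to prove $|R_\pi| \geq |R_\tau|$ for each fixed $c$.

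I will establish the stronger prefix-dominance claim
\[
\#\{i \in R_\pi : i \leq r\} \;\geq\; \#\{i \in R_\tau : i \leq r\} \qquad \text{for every } r \in \{0,1,\ldots,n\},
\]
by induction on $r$, and then take $r=n$. The two ingredients feeding the induction are Proposition~\ref{prop:FCI_consequence} applied to $\tau$, giving $\#\{i \in R_\tau : i \leq r\} \leq \#\{i \in S : i \leq r\}$ whenever $r \in R_\tau$, and Proposition~\ref{prop:pi_characterization} applied to $\pi_{D,S}$, whose contrapositive gives the ``rejection bound'' $\#\{i \in R_\pi : i < r\} \geq \#\{i \in S : i \leq r\}$ whenever $(r,c) \in D$ but $r \notin R_\pi$. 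The inductive step then splits into three cases. If $r \in R_\pi$, the left prefix count grows by exactly $1$ while the right grows by at most $1$, so the hypothesis at $r-1$ propagates. If $r \notin R_\pi \cup R_\tau$, both prefix counts are unchanged at $r$. In the delicate case $r \in R_\tau \setminus R_\pi$ we necessarily have $(r,c) \in D$, so chaining the rejection bound with Proposition~\ref{prop:FCI_consequence} gives $\#\{i \in R_\pi : i \leq r\} \geq \#\{i \in S : i \leq r\} \geq \#\{i \in R_\tau : i \leq r\}$.

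The main obstacle, and what motivates the prefix-dominance strengthening, is that a direct attempt to compare $|R_\pi|$ to $|R_\tau|$ does not immediately expose the relevant mechanism: the crucial invariant is that whenever the greedy procedure refuses a box at $(r,c)$ it has already matched the cumulative $S$-quota, while any competing $\tau \in {\sf FCITab}(D)$ is bounded above by exactly that same quota, so $\tau$ can at most tie and never pull ahead. Once this invariant is isolated at each $r$, the remainder is the routine three-way case check above.
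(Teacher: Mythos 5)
The proposal is correct and takes essentially the same approach as the paper: both reduce to a columnwise prefix comparison against the quota $\#\{i \in S : i \leq r\}$ and combine Propositions~\ref{prop:pi_characterization} and~\ref{prop:FCI_consequence} at the first place $\tau$ could pull ahead. The paper phrases this as a minimal-counterexample argument while you phrase it as a prefix-dominance induction (noting along the way that your case $r \in R_\tau \setminus R_\pi$ is settled directly by the two quota bounds without invoking the inductive hypothesis), but the underlying mechanism is identical.
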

	
	\begin{proof}
		If not, then there exist $\tau \in {\sf FCITab}(D)$ and $(r,c) \in [n]^2$ satisfying
		\begin{align*}
		\#\{(i,c) \in \pi_{D,S}^{-1}(S) : i \leq r\} < \#\{(i,c) \in \tau^{-1}(S) : i \leq r\}
		\end{align*}
		and we can choose these such that $r$ is minimized. Then because $r$ is minimal,
		\begin{align*}
		\#\{(i,c) \in \pi_{D,S}^{-1}(S) : i < r\} = \#\{(i,c) \in \tau^{-1}(S) : i < r\}
		\end{align*}
                and $(r,c) \in \tau^{-1}(S) \smallsetminus \pi_{D,S}^{-1}(S)$, so in particular $(r,c) \in D$.
		Thus Proposition \ref{prop:FCI_consequence} implies
		\begin{align*}
		\#\{(i,c) \in \pi_{D,S}^{-1}(S) : i < r\} = \#\{(i,c) \in \tau^{-1}(S) : i < r\} < \#\{i \in S : i \leq r\}.
		\end{align*}
		But then we must have $(r,c) \in \pi_{D,S}^{-1}(S)$ by Proposition \ref{prop:pi_characterization}, a contradiction.
	\end{proof}
	
	If $\tau$ has shape a subset of $[n]^2$ and $\alpha = (\alpha_1,\dots,\alpha_n) \in \mathbb{R}_{\geq 0}^n$, say $\tau$ \emph{exhausts} $\alpha$ \emph{over} $S$ if
	\begin{align*}
	\sum_{i \in S} \alpha_i \leq \#\tau^{-1}(S).
	\end{align*}

\begin{example}\label{ex:exhaust}
Only the left tableau below exhausts $\alpha=(3,2,2,4)$ over $S=\{1,3\}$.

\begin{center}
\begin{tikzpicture}[scale=.4]
\draw (0,0) rectangle (5,5);

\draw (0,4) rectangle (1,5)  node[pos=.5] {$1$};

\draw (1,3) rectangle (2,4)  node[pos=.5] {$1$};
\draw (2,3) rectangle (3,4)  node[pos=.5] {$1$};

\draw (2,2) rectangle (3,3)  node[pos=.5] {$3$};

\draw (2,1) rectangle (3,2)  node[pos=.5] {$4$};
\draw (4,1) rectangle (5,2)  node[pos=.5] {$4$};

\draw (0,0) rectangle (1,1)  node[pos=.5] {$4$};
\draw (2,0) rectangle (3,1)  node[pos=.5] {$\circ$};
\draw (3,0) rectangle (4,1)  node[pos=.5] {$4$};
\draw (4,0) rectangle (5,1)  node[pos=.5] {$3$};
\end{tikzpicture}\qquad
\begin{tikzpicture}[scale=.4]

\draw (0,0) rectangle (5,5);

\draw (0,4) rectangle (1,5)  node[pos=.5] {$1$};

\draw (1,3) rectangle (2,4)  node[pos=.5] {$1$};
\draw (2,3) rectangle (3,4)  node[pos=.5] {$2$};

\draw (2,2) rectangle (3,3)  node[pos=.5] {$3$};

\draw (2,1) rectangle (3,2)  node[pos=.5] {$4$};
\draw (4,1) rectangle (5,2)  node[pos=.5] {$4$};

\draw (0,0) rectangle (1,1)  node[pos=.5] {$4$};
\draw (2,0) rectangle (3,1)  node[pos=.5] {$\circ$};
\draw (3,0) rectangle (4,1)  node[pos=.5] {$4$};
\draw (4,0) rectangle (5,1)  node[pos=.5] {$2$};
\end{tikzpicture}
\end{center}
\end{example}

	\begin{theorem}\label{thm:dependent_characterization}
		Let $D \subseteq [n]^2$ and $\alpha = (\alpha_1,\dots,\alpha_n) \in \mathbb{Z}_{\geq 0}^n$ with $\alpha_1 + \dots + \alpha_n = \#D$. Then $\alpha \in \mathcal{S}_D$ if and only if for each $S \subseteq [n]$, there exists $\tau_{D,S} \in {\sf FCITab}(D)$ which exhausts $\alpha$ over $S$.
	\end{theorem}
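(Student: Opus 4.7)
The plan is to observe that the condition ``there exists $\tau \in {\sf FCITab}(D)$ which exhausts $\alpha$ over $S$'' is, for each fixed $S$, equivalent to the single inequality $\sum_{i \in S} \alpha_i \leq \theta_D(S)$ appearing in the definition \eqref{eqn:ogschubitope} of $\mathcal{S}_D$. Once that equivalence is established, the theorem follows directly by quantifying over all $S \subseteq [n]$, since the nonnegativity of $\alpha$ and the sum condition $\alpha_1 + \dots + \alpha_n = \#D$ are already part of the hypothesis. The heavy lifting has in fact been done in the preceding propositions: Proposition \ref{prop:pi_optimality} says that $\pi_{D,S}$ maximizes $\#\tau^{-1}(S)$ among all $\tau \in {\sf FCITab}(D)$, and Proposition \ref{prop:theta_equivalence} says that this maximum value equals $\theta_D(S)$.

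For the forward direction, I would assume $\alpha \in \mathcal{S}_D$ and, given $S$, simply take $\tau_{D,S} := \pi_{D,S}$. This lies in ${\sf FCITab}(D)$ by Proposition \ref{prop:pi_in_FCITab}, and Proposition \ref{prop:theta_equivalence} yields
\[
\#\tau_{D,S}^{-1}(S) \;=\; \#\pi_{D,S}^{-1}(S) \;=\; \theta_D(S) \;\geq\; \sum_{i \in S} \alpha_i,
\]
where the last inequality is the Schubitope constraint. Hence $\tau_{D,S}$ exhausts $\alpha$ over $S$.

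For the backward direction, suppose that for each $S \subseteq [n]$ some $\tau_{D,S} \in {\sf FCITab}(D)$ exhausts $\alpha$ over $S$. Combining exhaustion with Propositions \ref{prop:pi_optimality} and \ref{prop:theta_equivalence} gives
\[
\sum_{i \in S} \alpha_i \;\leq\; \#\tau_{D,S}^{-1}(S) \;\leq\; \#\pi_{D,S}^{-1}(S) \;=\; \theta_D(S),
\]
which is the remaining defining inequality of $\mathcal{S}_D$; the nonnegativity $\alpha_i \geq 0$ and the normalization $\alpha_1 + \dots + \alpha_n = \#D$ are in the hypothesis, so $\alpha \in \mathcal{S}_D$.

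There is essentially no obstacle to surmount here: the substantive content is the optimality of the greedy tableau $\pi_{D,S}$ (Proposition \ref{prop:pi_optimality}) and its identification with $\theta_D(S)$ (Proposition \ref{prop:theta_equivalence}), both already in hand. The theorem is a clean repackaging of those facts in the form most useful for subsequent arguments, where one will want to manipulate a witnessing tableau $\tau_{D,S}$ rather than an inequality on $\theta_D(S)$.
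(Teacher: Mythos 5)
Your proposal is correct and is essentially identical to the paper's own proof: both directions take $\tau_{D,S} := \pi_{D,S}$ in the forward direction via Proposition~\ref{prop:theta_equivalence}, and both reduce the backward direction to $\sum_{i\in S}\alpha_i \le \#\tau_{D,S}^{-1}(S)\le \#\pi_{D,S}^{-1}(S)=\theta_D(S)$ via Propositions~\ref{prop:pi_optimality} and~\ref{prop:theta_equivalence}.
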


	\begin{proof}[Proof of Theorem \ref{thm:dependent_characterization}.]
		$(\Rightarrow)$ The inequalities in \eqref{eqn:ogschubitope} combined with Proposition \ref{prop:theta_equivalence} imply
		\begin{align*}
		\sum_{i \in S} \alpha_i \leq \theta_D(S) = \#\pi_{D,S}^{-1}(S).
		\end{align*}
		Thus, $\tau_{D,S} := \pi_{D,S}$ exhausts $\alpha$ over $S$.
		
		\noindent
		$(\Leftarrow)$ By Propositions \ref{prop:pi_optimality} and \ref{prop:theta_equivalence},
		\begin{align*}
		\sum_{i \in S} \alpha_i \leq \#\tau_{D,S}^{-1}(S) \leq \#\pi_{D,S}^{-1}(S) = \theta_D(S),
		\end{align*}
		so the inequalities in \eqref{eqn:ogschubitope} hold.
	\end{proof}

	\begin{remark}\label{rmk:tau=pi}
		The proof of ($\Rightarrow$) shows that we can take $\tau_{D,S} = \pi_{D,S}$ in Theorem \ref{thm:dependent_characterization}.
	\end{remark}
	
	It would be nice if $\tau_{D,S}$ did not depend on $S$, i.e., if some $\tau_D$ exhausted $\alpha$ over all $S \subseteq [n]$, so we could take $\tau_{D,S} = \tau_D$ in Theorem \ref{thm:dependent_characterization}. Indeed, this is shown in Theorem \ref{thm:independent_characterization}.
	
	Say $\tau \in {\sf Tab}(D)$ has \emph{content} $\alpha$ if $\#\tau^{-1}(\{i\}) = \alpha_i$ for each $i \in [n]$. Let ${\sf Tab}(D,\alpha)$ and ${\sf FCITab}(D,\alpha)$ be the subsets of ${\sf Tab}(D)$ and ${\sf FCITab}(D)$, respectively, of those tableaux which have content $\alpha$. In addition, call a tableau $\tau \in {\sf Tab}(D)$ \emph{perfect} if $\tau \in {\sf FCITab}(D)$, and if no boxes are left unlabelled, i.e., $\tau^{-1}(\{\nolabel\}) = \emptyset$. Thus, the set of perfect tableaux of content $\alpha$ is precisely ${\sf PerfectTab}(D,\alpha) \subseteq {\sf FCITab}(D,\alpha)$ introduced in Section \ref{sec:intro}.
	
	\begin{proposition}\label{prop:proper_content}
		Let $D \subseteq [n]^2$ and $\alpha = (\alpha_1,\dots,\alpha_n) \in \mathbb{Z}_{\geq 0}^n$. Then ${\sf PerfectTab}(D,\alpha) \neq \emptyset$ if and only if $\alpha_1 + \dots + \alpha_n = \#D$ and ${\sf FCITab}(D,\alpha) \neq \emptyset$.
	\end{proposition}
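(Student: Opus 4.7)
The plan is to unpack the definitions and carry out a direct counting argument in both directions; no clever construction is needed. The key observation is simply that for any tableau $\tau \in {\sf Tab}(D)$, the box set $D$ is partitioned as
\[
D = \tau^{-1}(\{\nolabel\}) \;\sqcup\; \bigsqcup_{i=1}^n \tau^{-1}(\{i\}),
\]
so $\#D = \#\tau^{-1}(\{\nolabel\}) + \sum_{i=1}^n \#\tau^{-1}(\{i\})$. If $\tau$ has content $\alpha$, this reads $\#D = \#\tau^{-1}(\{\nolabel\}) + \sum_i \alpha_i$, linking the two conditions we need to compare.

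For the forward direction, I would take $\tau \in {\sf PerfectTab}(D,\alpha)$. By definition of ``perfect'', $\tau \in {\sf FCITab}(D)$ and $\tau^{-1}(\{\nolabel\}) = \emptyset$, and by definition of ``content $\alpha$'', $\#\tau^{-1}(\{i\}) = \alpha_i$. So $\tau \in {\sf FCITab}(D,\alpha)$, showing ${\sf FCITab}(D,\alpha) \neq \emptyset$, and the displayed identity above gives $\#D = \sum_i \alpha_i$.

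For the reverse direction, assume $\alpha_1 + \cdots + \alpha_n = \#D$ and pick any $\tau \in {\sf FCITab}(D,\alpha)$. Applying the same identity, $\#\tau^{-1}(\{\nolabel\}) = \#D - \sum_i \alpha_i = 0$, so $\tau$ has no unlabelled boxes. Combined with $\tau \in {\sf FCITab}(D)$, this shows $\tau$ is perfect, and it retains content $\alpha$, so $\tau \in {\sf PerfectTab}(D,\alpha)$.

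There is no real obstacle here; the proposition is a bookkeeping lemma whose whole content lies in matching the definition of ``perfect'' (no $\nolabel$-entries) with the content condition $\sum_i \alpha_i = \#D$. I expect the authors' proof to be a sentence or two along exactly these lines.
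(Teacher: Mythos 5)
Your proof is correct and takes essentially the same approach as the paper's: both directions follow immediately from the box-counting identity $\#D = \#\tau^{-1}(\{\nolabel\}) + \sum_i \#\tau^{-1}(\{i\})$ together with the definitions of ``perfect'' and ``content $\alpha$''.
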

	
	\begin{proof}
		($\Rightarrow$) Let $\tau \in {\sf PerfectTab}(D,\alpha)$. Then $\tau \in {\sf FCITab}(D,\alpha)$, and since $\tau$ has content $\alpha$ and satisfies $\tau^{-1}(\{\nolabel\}) = \emptyset$,
		\begin{align*}
		\alpha_1 + \dots + \alpha_n = \#\tau^{-1}(\{1\}) + \dots + \#\tau^{-1}(\{n\}) = \#D.
		\end{align*}
		
		\noindent
		($\Leftarrow$) Let $\tau \in {\sf FCITab}(D,\alpha)$. Then since $\tau$ has content $\alpha$,
		\begin{align*}
		\#\tau^{-1}(\{\nolabel\}) = \#D - \#\tau^{-1}(\{1\}) - \dots - \#\tau^{-1}(\{n\}) = \#D - \alpha_1 - \dots - \alpha_n = 0.
		\end{align*}
		Thus, $\tau \in {\sf PerfectTab}(D,\alpha)$.
	\end{proof}
	
	\begin{theorem}\label{thm:independent_characterization}
		Let $D \subseteq [n]^2$ and $\alpha = (\alpha_1,\dots,\alpha_n) \in \mathbb{Z}_{\geq 0}^n$. Then $\alpha \in \mathcal{S}_D$ if and only if ${\sf PerfectTab}(D,\alpha) \neq \emptyset$.
	\end{theorem}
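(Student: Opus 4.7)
The $(\Leftarrow)$ direction is immediate: given $\tau \in {\sf PerfectTab}(D,\alpha)$, the tableau has content $\alpha$ and labels every box, so $\#\tau^{-1}(S) = \sum_{i \in S} \alpha_i$ for every $S \subseteq [n]$, and Proposition~\ref{prop:proper_content} gives $\alpha_1 + \cdots + \alpha_n = \#D$. Thus $\tau$ exhausts $\alpha$ over every $S$, and Theorem~\ref{thm:dependent_characterization} applied with $\tau_{D,S} := \tau$ for all $S$ yields $\alpha \in \mathcal{S}_D$.

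For the $(\Rightarrow)$ direction, assume $\alpha \in \mathcal{S}_D$; in particular $\sum_i \alpha_i = \#D$, so by Proposition~\ref{prop:proper_content} it suffices to exhibit some $\tau \in {\sf FCITab}(D,\alpha)$. My plan is an extremal/exchange argument. First I would observe that the set of \emph{perfect} tableaux in ${\sf FCITab}(D)$ (those with no unlabeled boxes) is nonempty: Remark~\ref{rmk:tau=pi} with $S = [n]$ produces $\pi_{D,[n]} \in {\sf FCITab}(D)$ with $\#\pi_{D,[n]}^{-1}([n]) \geq \sum_i \alpha_i = \#D$, so $\pi_{D,[n]}$ labels every box. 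I then choose a perfect $\tau$ minimizing the $L^1$ distance $\Phi(\tau) := \sum_i |\#\tau^{-1}(\{i\}) - \alpha_i|$ to $\alpha$, and aim to show $\Phi(\tau) = 0$, which forces $\tau$ to have content exactly $\alpha$.

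Suppose for contradiction $\Phi(\tau) > 0$. Let $S^* := \{j : \#\tau^{-1}(\{j\}) < \alpha_j\}$, nonempty because excesses and deficits balance ($\sum_i \#\tau^{-1}(\{i\}) = \#D = \sum_i \alpha_i$). Then $\sum_{j \in S^*} \#\tau^{-1}(\{j\}) < \sum_{j \in S^*} \alpha_j$, so Theorem~\ref{thm:dependent_characterization} and Remark~\ref{rmk:tau=pi} applied to $S^*$ yield $\tau' := \pi_{D,S^*} \in {\sf FCITab}(D)$ with $\#(\tau')^{-1}(S^*) \geq \sum_{j \in S^*} \alpha_j > \#\tau^{-1}(S^*)$. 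The next step is to use $\tau'$ as a witness for transforming $\tau$ into a perfect $\tau'' \in {\sf FCITab}(D)$ in which some box previously carrying a strictly excess label has been relabeled by an element of $S^*$, thereby decreasing $\Phi$ by at least $2$ and contradicting minimality.

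This augmenting exchange is the main obstacle. Concretely, I would study the symmetric-difference bipartite graph $H$ whose vertices are the boxes of $D$ together with the $(\text{column},\text{label})$ slots used by either $\tau$ or $\tau'$, with one edge $\{b,(c_b,\tau(b))\}$ per box coming from $\tau$ and one edge $\{b,(c_b,\tau'(b))\}$ coming from $\tau'$. Column-injectivity of both tableaux forces $H$ to decompose column by column into alternating paths and cycles. An alternating path starting at a box $b^*$ with $\tau'(b^*) \in S^*$ and $\tau(b^*) \notin S^*$, and terminating either at a slot unused by $\tau$ or at a box with a strictly excess $\tau$-label, prescribes a consistent chain of relabelings of $\tau$ that preserves flaggedness, column-injectivity, and perfectness while strictly reducing $\Phi$. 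Verifying that such a terminating path always exists (rather than looping back unfavorably) is the delicate step, requiring careful analysis of the $S^*/(S^*)^c$ pattern within each column together with the global balance of excesses and deficits. If the tableau-level swap proves too intricate to present cleanly, a robust backup is to recast the problem as integral feasibility of a bipartite flow network---source to label $i$ with capacity $\alpha_i$, label $i$ to intermediary $(c,i)$ with capacity $1$, $(c,i)$ to each $(r,c) \in D$ with $i \leq r$ with capacity $1$, and each box to sink with capacity $1$---and identify the min-cut inequalities with the Schubitope inequalities defining $\alpha \in \mathcal{S}_D$.
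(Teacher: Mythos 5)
Your $(\Leftarrow)$ direction is correct and the same as the paper's. For $(\Rightarrow)$, however, the paper does something quite different from what you sketch: it runs an induction on $\sum_{(i,j)\in D} i$, splitting into (i) a box-lowering case when $D$ has boxes below row $m=\max\{i:\alpha_i>0\}$, (ii) a box-deletion case when all nontrivial Schubitope inequalities are strict, and (iii) a diagram-decomposition case $D=D^{(1)}\sqcup D^{(2)}$ along a tight inequality, using Lemma~\ref{lemma:pi_union}. Your extremal/augmenting-path route is genuinely different, but as written it has a gap.

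The problem with the main line of your argument is that $\tau':=\pi_{D,S^*}$ is not perfect: it labels only $\pi_{D,S^*}^{-1}(S^*)$ and leaves other boxes as $\nolabel$. In the symmetric-difference graph you build, a box $b$ is $\tau'$-unmatched whenever $\tau'(b)=\nolabel$, so an alternating path can terminate at such a box. Pushing the swap through that path would replace $\tau(b)$ with $\nolabel$, destroying perfectness. You flag that ``verifying that such a terminating path always exists'' is delicate, but the real concern is stronger: you have not established that any alternating path avoids $\tau'$-unlabeled boxes, nor that whatever path is used preserves all three of flaggedness, column-injectivity, and perfectness simultaneously while strictly decreasing $\Phi$. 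As it stands, this is a missing idea, not merely an omitted verification.

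Your backup via the flow network is much closer to a complete proof, and it is the more natural formalization of what your exchange argument is implicitly trying to do. With source $\to$ label $i$ of capacity $\alpha_i$, label $i\to(c,i)$ of capacity $1$, $(c,i)\to(r,c)$ for $(r,c)\in D$, $i\le r$ of capacity $1$, and box $\to$ sink of capacity $1$, an integral max flow of value $\#D$ is exactly a perfect tableau of content $\alpha$, and integrality of max flow in networks with integer capacities is automatic. The one substantive step you have not carried out is the identification of the min-cut with the Schubitope. Concretely, if $S$ is the set of labels on the source side of a cut, one must show that the remaining cut cost decomposes by columns and that the minimum column-$c$ cost equals $\theta^c_D(S)$; by K\"onig's theorem this reduces to showing that $\theta^c_D(S)$ is the maximum matching in the bipartite graph with left side $S$, right side $\{r:(r,c)\in D\}$, and edges $i\!-\!r$ iff $i\leq r$. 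That is precisely the content of Propositions~\ref{prop:theta_equivalence}, \ref{prop:pi_characterization} and \ref{prop:pi_optimality}, which you implicitly have available (you cite Remark~\ref{rmk:tau=pi}), so the backup can be made rigorous. It is worth noting, though, that this route essentially merges the paper's Theorems~\ref{thm:independent_characterization} and~\ref{thm:relaxation_equivalence} into a single max-flow argument, whereas the paper's structural induction for Theorem~\ref{thm:independent_characterization} is self-contained and does not appeal to LP or flow machinery; the paper then gives the LP argument (total unimodularity / ellipsoid) separately for computational purposes.
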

	
	The proof will require a lemma regarding tableaux of the form $\tau = \pi_{D,S}$.
	
	\begin{lemma}\label{lemma:pi_union}
		Let $D \subseteq [n]^2$, and $S,T \subseteq [n]$ be disjoint. Set 
		\[\tilde{D} = D \smallsetminus \pi_{D,S}^{-1}(S) \text{\ and \ $U = S \cup T$.}\] 
		Then
		\begin{align*}
		\pi_{D,U}^{-1}(U) = \pi_{D,S}^{-1}(S) \cup \pi_{\tilde{D},T}^{-1}(T).
		\end{align*}
	\end{lemma}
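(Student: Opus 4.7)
The lemma is inherently column-local, since $\pi_{D,S}$ is defined via the word in each column independently. My plan is to fix an arbitrary column $c$ and prove, by induction on the row $r$, that the box $(r,c)$ lies on the left-hand side if and only if it lies on the right-hand side.

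To set up the induction cleanly, I would introduce the cumulative counts (within column $c$)
\[
a(r) = \#\{(i,c) \in \pi_{D,S}^{-1}(S) : i \le r\}, \quad
b(r) = \#\{(i,c) \in \pi_{\tilde{D},T}^{-1}(T) : i \le r\}, \quad
u(r) = \#\{(i,c) \in \pi_{D,U}^{-1}(U) : i \le r\},
\]
together with $s(r) = \#\{i \in S : i \le r\}$ and $t(r) = \#\{i \in T : i \le r\}$, so that $\#\{i \in U : i \le r\} = s(r) + t(r)$ because $S$ and $T$ are disjoint. The inductive claim is the single numerical identity $u(r) = a(r) + b(r)$, combined with the corresponding set equality up to row $r$. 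By Proposition~\ref{prop:pi_characterization}, membership of $(r,c)$ in each of the three sets is equivalent to a strict inequality between the relevant ``less than $r$'' count and $s(r)$, $t(r)$, or $s(r)+t(r)$; so passing from $r-1$ to $r$ becomes pure arithmetic once the inductive hypothesis is available.

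The inductive step naturally splits into cases. If $(r,c) \notin D$, nothing changes and there is nothing to check. If $(r,c) \in D \cap \pi_{D,S}^{-1}(S)$, then $(r,c) \notin \tilde{D}$, so $b$ does not increment; using $a(r-1) < s(r)$ and the bound $b(r-1) \le t(r-1) \le t(r)$ coming from Proposition~\ref{prop:FCI_consequence} applied to $\pi_{\tilde D, T}$, one gets $u(r-1) < s(r)+t(r)$, forcing $(r,c) \in \pi_{D,U}^{-1}(U)$ and $u(r) = a(r)+b(r)$. If $(r,c) \in D \setminus \pi_{D,S}^{-1}(S)$, then $(r,c) \in \tilde{D}$, and Proposition~\ref{prop:pi_characterization} applied to $(D,S)$ gives $a(r-1) \ge s(r)$; one then handles separately whether $(r,c) \in \pi_{\tilde{D},T}^{-1}(T)$ (in which case $b(r-1) < t(r)$, and combined with $a(r-1) \le s(r)$ via Proposition~\ref{prop:FCI_consequence} one concludes $(r,c) \in \pi_{D,U}^{-1}(U)$) or $(r,c) \notin \pi_{\tilde{D},T}^{-1}(T)$ (in which case $b(r-1) \ge t(r)$, yielding $u(r-1) \ge s(r)+t(r)$ and hence $(r,c) \notin \pi_{D,U}^{-1}(U)$).

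The main obstacle is bookkeeping: one must track the correct direction of each inequality ($\le$ from Proposition~\ref{prop:FCI_consequence} versus strict $<$ from Proposition~\ref{prop:pi_characterization}) and marry them so that the sum $u(r-1)$ crosses or fails to cross the threshold $s(r)+t(r)$ exactly when it should. Packaging the argument around the single invariant $u(r) = a(r) + b(r)$, rather than directly comparing the sets, keeps the case analysis from bloating and makes it transparent that disjointness of $S$ and $T$ is precisely what allows the thresholds to decouple additively.
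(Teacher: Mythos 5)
Your proposal is correct and follows essentially the same strategy as the paper's proof: column-by-column induction on the row index, with Proposition~\ref{prop:pi_characterization} converting membership in each of the three sets into a threshold inequality and Proposition~\ref{prop:FCI_consequence} supplying the non-strict bounds that let disjointness decouple the thresholds. The paper packages the case split a bit more tersely (observing that $a(r-1)+b(r-1) < s(r)+t(r)$ iff at least one of $a(r-1)<s(r)$, $b(r-1)<t(r)$ holds, then peeling off the $S$-case first), whereas you make the invariant $u(r)=a(r)+b(r)$ explicit and enumerate the cases, but the underlying argument is the same.
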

	
	\begin{proof}
		Let $(r,c) \in D$, and assume by induction on $r$ that
		\begin{align}\label{eqn:partial_characterization}
		(i,c) \in \pi_{D,U}^{-1}(U) \iff (i,c) \in \pi_{D,S}^{-1}(S) \cup \pi_{\tilde{D},T}^{-1}(T) 
		\end{align}
		whenever $i < r$. This clearly holds in the base case $r=1$. By Proposition \ref{prop:pi_characterization}, $(r,c) \in \pi_{D,U}^{-1}(U)$ if and only if
		\begin{align}\label{eqn:union_characterization}
		\#\{(i,c) \in \pi_{D,U}^{-1}(U) : i < r\} < \#\{i \in U : i \leq r\}.
		\end{align}
		By \eqref{eqn:partial_characterization} and the fact that 
		\[\pi_{D,S}^{-1}(S) \cap \tilde{D} = \emptyset = S \cap T,\] 
		\eqref{eqn:union_characterization} is equivalent to
		\begin{align*}
		\#\{(i,c) \in \pi_{D,S}^{-1}(S) : i < r\} + \#\{(i,c) \in \pi_{\tilde{D},T}^{-1}(T) : i < r\} < \#\{i \in S : i \leq r\} + \#\{i \in T : i \leq r\}.
		\end{align*}
		By applying Proposition \ref{prop:FCI_consequence} twice, we see that this holds if and only if at least one of (i) and (ii) below hold.
		\begin{enumerate}
			\item[(i)] $\#\{(i,c) \in \pi_{D,S}^{-1}(S) : i < r\} < \#\{i \in S : i \leq r\}$
			\item[(ii)] $\#\{(i,c) \in \pi_{\tilde{D},T}^{-1}(T) : i < r\} < \#\{i \in T : i \leq r\}$
		\end{enumerate}
		By Proposition \ref{prop:pi_characterization}, (i) is equivalent to $(r,c) \in \pi_{D,S}^{-1}(S)$.
If indeed $(r,c) \in \pi_{D,S}^{-1}(S)$ holds, then our induction step is complete. Otherwise, 
$(r,c) \not\in \pi_{D,S}^{-1}(S)$, so by definition, $(r,c)\in \tilde{D}$. Thus, applying Proposition~\ref{prop:pi_characterization}
to $\tilde{D}$, $T\subseteq [n]$ and $(r,c)\in \tilde{D}$, 
(ii) is equivalent to $(r,c) \in \pi_{\tilde{D},T}^{-1}(T)$. Hence, \eqref{eqn:partial_characterization} holds for all $i \leq r$.
	\end{proof}
	
	\begin{corollary}\label{cor:pi_subset}
		Let $D \subseteq [n]^2$ and $S \subseteq U \subseteq [n]$. Then $\pi_{D,S}^{-1}(S) \subseteq \pi_{D,U}^{-1}(U)$.
	\end{corollary}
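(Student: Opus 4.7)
The plan is to derive this as an immediate consequence of Lemma \ref{lemma:pi_union}. Given $S \subseteq U \subseteq [n]$, I would set $T := U \smallsetminus S$, so that $S$ and $T$ are disjoint with $S \cup T = U$. Applying Lemma \ref{lemma:pi_union} with this choice of $S$ and $T$ (and with $\tilde{D} := D \smallsetminus \pi_{D,S}^{-1}(S)$) yields
\[
\pi_{D,U}^{-1}(U) \;=\; \pi_{D,S}^{-1}(S) \cup \pi_{\tilde{D},T}^{-1}(T),
\]
and the right-hand side manifestly contains $\pi_{D,S}^{-1}(S)$, giving the desired inclusion.

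There is no substantive obstacle here: the whole content of the corollary is packaged into Lemma \ref{lemma:pi_union}, and the corollary just extracts one containment from the union decomposition. The only minor point to verify is that the hypotheses of the lemma are met, namely that $S$ and $T = U \smallsetminus S$ are disjoint, which is immediate, and that they are both subsets of $[n]$, which follows from $U \subseteq [n]$. I would therefore present the proof as a two-line deduction, merely naming $T$ and invoking the lemma.
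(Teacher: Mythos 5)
Your proof is correct and matches the paper's proof exactly: the paper also takes $T = U \smallsetminus S$ and invokes Lemma \ref{lemma:pi_union}. You have merely spelled out the (trivial) verification of the lemma's hypotheses, which the paper leaves implicit.
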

	
	\begin{proof}
		Take $T = U \smallsetminus S$ in Lemma \ref{lemma:pi_union}.
	\end{proof}
	
	Finally, we are ready to prove Theorem \ref{thm:independent_characterization}.
	
	\begin{proof}[Proof of Theorem \ref{thm:independent_characterization}]
		($\Leftarrow$) Let $\tau_D \in {\sf PerfectTab}(D,\alpha)$. Then $\alpha_1 + \dots + \alpha_n = \#D$ by Proposition \ref{prop:proper_content}. Also, for each $S \subseteq [n]$,
		\begin{align*}
		\sum_{i \in S} \alpha_i = \sum_{i \in S} \#\tau_D^{-1}(\{i\}) = \#\tau_D^{-1}(S),
		\end{align*}
		so $\tau_D$ exhausts $\alpha$ over $S$. Thus, $\alpha \in \mathcal{S}_D$ by Theorem \ref{thm:dependent_characterization}.
		
		\noindent
		($\Rightarrow$) We induct on the sum of the row indices of each box in $D$, i.e., $\sum_{(i,j) \in D} i$. The base case of an empty diagram is trivial, so we may assume $D \neq \emptyset$. Then since $\alpha \in \mathcal{S}_D$, \eqref{eqn:ogschubitope} implies $\alpha_1 + \dots + \alpha_n = \#D > 0$, so we can choose $m$ maximal such that $\alpha_m > 0$.
		
		\noindent
		{\sf Case 1:} ($D$ contains boxes below row $m$). Pick $(r,c) \in D$ below row $m$ (so $r > m$).
		
		\begin{claim}\label{claim:diagram_gap}
			There exists $r_1 < r$ such that $(r_1,c) \not\in D$.
		\end{claim}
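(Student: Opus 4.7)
The plan is to apply the Schubitope inequality with the test set $S = [r-1]$ and derive a contradiction from assuming $(i,c) \in D$ for all $i < r$.

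First, since $r > m$ and $\alpha_i = 0$ for every $i > m$, we have $\sum_{i=1}^{r-1} \alpha_i = \alpha_1 + \cdots + \alpha_n = \#D$. Combined with $\alpha \in \mathcal{S}_D$, the defining inequality $\sum_{i \in S} \alpha_i \leq \theta_D(S)$ applied to $S = [r-1]$ forces $\#D \leq \theta_D([r-1])$.

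Second, I would bound $\theta_D^{c'}([r-1])$ column-by-column. For $S = [r-1]$, reading column $c'$ from top to bottom produces a $\star$ for each $D$-box in rows $1,\dots,r-1$, a ``$($'' for each empty cell in those same rows, and a ``$)$'' for each $D$-box in rows $\geq r$. Every ``$($'' therefore precedes every ``$)$'' in this word, so the number of paired parentheses equals $\min(b_{c'},d_{c'})$, where $b_{c'}$ counts the gaps in column $c'$ among rows $1,\dots,r-1$ and $d_{c'}$ counts the $D$-boxes of column $c'$ in rows $\geq r$. Writing $a_{c'}$ for the number of $D$-boxes of column $c'$ in rows $1,\dots,r-1$, this yields
\[
\theta_D^{c'}([r-1]) = a_{c'} + \min(b_{c'},d_{c'}) \leq a_{c'} + d_{c'} = \#\{i : (i,c') \in D\},
\]
with equality precisely when $d_{c'} \leq b_{c'}$.

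Finally, I would suppose for contradiction that $(i, c) \in D$ for every $i < r$, so $b_c = 0$; since $(r,c) \in D$ gives $d_c \geq 1 > b_c$, the column $c$ inequality above is strict. Summing over all columns yields $\theta_D([r-1]) < \#D$, contradicting $\#D \leq \theta_D([r-1])$. The only nontrivial ingredient is the choice $S = [r-1]$: the hypothesis $r > m$ saturates the $\alpha$-sum at $\#D$, while the initial-segment structure of $S$ gives each column's word the clean shape in which all ``$($''s precede all ``$)$''s, so any column that lacks a gap above row $r$ yet contains a $D$-box in row $\geq r$ immediately obstructs the bound.
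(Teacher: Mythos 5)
Your proof is correct and takes a genuinely different route from the paper's. The paper invokes Theorem~\ref{thm:dependent_characterization} with $S=[m]$ to produce a flagged column-injective tableau $\tau_{D,[m]}$ whose image is in $[m]$; column-injectivity then forces at most $m$ boxes per column of $D$, and the box $(r,c)$ with $r>m$ immediately yields a gap above it. You instead work directly from the Schubitope defining inequality with $S=[r-1]$, saturating $\sum_{i\in S}\alpha_i=\#D$ (valid because $r>m$ and all positive $\alpha_i$ sit in $[m]\subseteq[r-1]$), and then analyze ${\sf word}_{c',[r-1]}(D)$ column by column: with an initial-segment $S$, every $($ appears above row $r$ and every $)$ at or below row $r$, so the word has the clean form $\{\star,(\}^{*}\,\{)\}^{*}$ and $\theta_D^{c'}([r-1])=a_{c'}+\min(b_{c'},d_{c'})\leq a_{c'}+d_{c'}$; a gap-free column $c$ with a box at row $r$ forces $b_c=0<d_c$, making the inequality strict there, and summing over columns contradicts $\#D\leq\theta_D([r-1])$. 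Your argument is more self-contained (it needs only the halfspace description \eqref{eqn:ogschubitope}, not the tableau characterization), at the cost of a by-hand count of paired parentheses; the paper's version leans on the already-established machinery and is shorter in context.
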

		
		\begin{proof}[Proof of Claim \ref{claim:diagram_gap}.]
			By Theorem \ref{thm:dependent_characterization}, there exists $\tau_{D,[m]} \in {\sf FCITab}(D)$ such that
			\begin{align}\label{eqn:image_in_[m]}
			\#\tau_{D,[m]}^{-1}([m]) \geq \alpha_1 + \dots + \alpha_m = \alpha_1 + \dots + \alpha_n = \#D.
			\end{align}
			Thus, $\tau_{D,[m]}(D) \subseteq [m]$. Consequently, by column-injectivity of $\tau_{D,[m]}$,
there can be at most $m$ boxes in each column of $D$. Since $(r,c) \in D$ with $r > m$, there are more than $m$ boxes in column $c$ 
if $(r_1,c)\in D$ for all $r_1 < r$. Hence there must be some $r_1 < r$ for which $(r_1,c) \not\in D$, as asserted.
		\end{proof}
		
		By Claim \ref{claim:diagram_gap}, we can choose $r_1 < r$ maximal such that $(r_1,c) \not\in D$. Let
		\begin{align*}
		\tilde{D} = (D \smallsetminus \{(r,c)\}) \cup \{(r_1,c)\}.
		\end{align*}
		
		\begin{claim}\label{claim:shifted_box}
			$\alpha \in \mathcal{S}_{\tilde{D}}$.
		\end{claim}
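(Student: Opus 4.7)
The plan is to prove $\alpha \in \mathcal{S}_{\tilde{D}}$ via Theorem~\ref{thm:dependent_characterization}: for each $S \subseteq [n]$ I will produce a tableau $\tau_{\tilde{D},S} \in {\sf FCITab}(\tilde{D})$ exhausting $\alpha$ over $S$, built by modifying a greedy tableau on $D$ supplied by Remark~\ref{rmk:tau=pi}. Since $\#\tilde{D} = \#D$, the normalization $\alpha_1 + \cdots + \alpha_n = \#\tilde{D}$ is automatic from $\alpha \in \mathcal{S}_D$. Since $\tilde{D}$ differs from $D$ only by removing $(r,c)$ and inserting $(r_1,c)$, the natural attempt is to retain every entry of the starting tableau at boxes of $D \smallsetminus \{(r,c)\}$ and place a carefully chosen label at the new box $(r_1,c)$.

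The main obstacle is the case $\pi_{D,S}(r,c) = r$: a direct ``transfer'' of that label down to $(r_1,c)$ would violate the flag condition since $r > r_1$. To sidestep this, I will exploit the maximality of $m$: because $r > m$ we have $\alpha_r = 0$, and hence $\sum_{i \in S}\alpha_i = \sum_{i \in S \smallsetminus \{r\}}\alpha_i$, so it suffices to exhaust $\alpha$ over $S \smallsetminus \{r\}$. Accordingly I replace $\pi_{D,S}$ throughout by $\pi := \pi_{D, S \smallsetminus \{r\}}$, whose non-$\nolabel$ entries all lie in $S \smallsetminus \{r\} \subseteq S$.

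The key observation making the construction safe is that $\pi(r,c)$ is always compatible with the target box $(r_1,c)$. Because $r \notin S \smallsetminus \{r\}$, definition~\eqref{eqn:pi_definition} forces either $\pi(r,c) = \nolabel$, or $\pi(r,c) = s$ for some $s \in S \smallsetminus \{r\}$ with $s < r$ and $(s,c)\not\in D$ (the paired case). In the latter case the maximality of $r_1$ (the largest index $<r$ with $(r_1,c)\not\in D$, established in Claim~\ref{claim:diagram_gap}) forces $s \leq r_1$, exactly the inequality needed to flag the new box at row $r_1$. This is where the particular choice of $r_1$ pays off.

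Finally I will set $\tau_{\tilde{D},S}$ to agree with $\pi$ on $D \smallsetminus \{(r,c)\}$ and put $\tau_{\tilde{D},S}(r_1,c) := \pi(r,c)$. Flaggedness at $(r_1,c)$ follows from the previous paragraph, and column-injectivity survives because we have merely relocated the unique occurrence of the label $\pi(r,c)$ within column $c$. The count is preserved:
\[
\#\tau_{\tilde{D},S}^{-1}(S) = \#\pi^{-1}(S) = \#\pi^{-1}(S \smallsetminus \{r\}) = \theta_D(S \smallsetminus \{r\}) \geq \sum_{i \in S \smallsetminus \{r\}}\alpha_i = \sum_{i \in S}\alpha_i,
\]
using Proposition~\ref{prop:theta_equivalence} and \eqref{eqn:ogschubitope} in the middle. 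Thus $\tau_{\tilde{D},S}$ exhausts $\alpha$ over $S$, and Theorem~\ref{thm:dependent_characterization} yields $\alpha \in \mathcal{S}_{\tilde{D}}$.
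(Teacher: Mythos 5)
Your proposal is correct and follows essentially the same strategy as the paper: shrink $S$ to a subset $T$ avoiding $r$, take the greedy tableau $\pi_{D,T}$ from Remark~\ref{rmk:tau=pi}, transfer its label at $(r,c)$ down to $(r_1,c)$, and use the maximality of $r_1$ (from Claim~\ref{claim:diagram_gap}) to see the transferred label $s$ satisfies $s \leq r_1$, preserving flaggedness. The only difference is that you take $T = S \smallsetminus \{r\}$, removing only the single problematic index, whereas the paper takes $T = S \cap [m]$, discarding all indices above $m$ at once. Both are valid since the three properties actually used are $r \notin T$, $T \subseteq S$, and $\sum_{i\in T}\alpha_i = \sum_{i\in S}\alpha_i$; you get the last from $\alpha_r = 0$, the paper from $\alpha_i = 0$ for all $i > m$. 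Your choice is the minimal truncation, the paper's is slightly more uniform, but there is no substantive divergence.
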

		
		\begin{proof}[Proof of Claim \ref{claim:shifted_box}.]
			Since $\alpha \in \mathcal{S}_D$, $(r,c) \in D$, and $(r_1,c) \not\in D$, we have
			\begin{align*}
			\alpha_1 + \dots + \alpha_n = \#D = \#\tilde{D}.
			\end{align*}
			Let $S \subseteq [n]$ and $T = S \cap [m]$. Then define $\tau_{\tilde{D},S} \in {\sf Tab}(\tilde{D})$ by
			\begin{align*}
			\tau_{\tilde{D},S}(i,j) =
			\begin{cases}
			\pi_{D,T}(r,c) &\text{ if } (i,j) = (r_1,c), \\
			\pi_{D,T}(i,j) &\text{ otherwise}.
			\end{cases}
			\end{align*}

			If $\pi_{D,T}(r,c) = \nolabel$, then certainly $\tau_{\tilde{D},S} \in {\sf FCITab}(\tilde{D})$. 
Otherwise, let $s = \pi_{D,T}(r,c)$. Since $(r,c) \in D$ but $r \not\in T$, $(r,c)$ contributes a ``$)$'' to ${\sf word}_{c,S}(D)$. Thus, by \eqref{eqn:pi_definition}, $(s,c)$ contributes an 
``$($'', so in particular $(s,c) \not\in D$. From our choice of $r_1$, we must therefore have $s \leq r_1$, so $\tau_{\tilde{D},S}$ is flagged. Hence, $\tau_{\tilde{D},S} \in {\sf FCITab}(\tilde{D})$. 

By construction, 
\[\#\tau_{\tilde{D},S}^{-1}(\{i\}) = \#\pi_{D,T}^{-1}(\{i\})\] 
for each $i \in [n]$, so $\tau_{\tilde{D},S}$ exhausts $\alpha$ over $T$ by Theorem \ref{thm:dependent_characterization} and in particular Remark \ref{rmk:tau=pi}. Since $\alpha_i = 0$ for all $i > m$, we can write
			\begin{align*}
			\sum_{i \in S} \alpha_i = \sum_{i \in T} \alpha_i \leq \#\tau_{\tilde{D},S}^{-1}(T) \leq \#\tau_{\tilde{D},S}^{-1}(S).
			\end{align*}
			Therefore, $\tau_{\tilde{D},S}\in  {\sf FCITab}(\tilde{D})$ exhausts $\alpha$ over $S$, so $\alpha \in \mathcal{S}_{\tilde{D}}$ by Theorem \ref{thm:dependent_characterization}.
		\end{proof}
		
		Since $r_1 < r$,
		\begin{align*}
		\sum_{(i,j) \in \tilde{D}} i < \sum_{(i,j) \in D} i.
		\end{align*}
		Thus, Claim \ref{claim:shifted_box} and induction yields $\tau_{\tilde{D}} \in {\sf PerfectTab}(\tilde{D},\alpha)$. Define $\tau_D \in {\sf Tab}(D)$ by
		\begin{align*}
		\tau_D(i,j) =
		\begin{cases}
		\tau_{\tilde{D}}(r_1,c) &\text{if } (i,j) = (r,c), \\
		\tau_{\tilde{D}}(i,j) &\text{otherwise}.
		\end{cases}
		\end{align*}
		Then it is easy to check that $\tau_D \in {\sf PerfectTab}(D,\alpha)$, so {\sf Case 1} is complete.
		
		\noindent
		{\sf Case 2:} ($D$ does not contain boxes below row $m$). We say an inequality
                $
                \sum_{i \in S}\alpha_i \leq \theta_D(S)
                $
                from \eqref{eqn:ogschubitope} is \emph{nontrivial} if
		\begin{align}\label{eqn:nontrivSchub}
		\sum_{i \in S} \alpha_i > 0
		\qquad \text{and} \qquad
		\theta_D(S) < \#D.
		\end{align}

		\noindent
		{\sf Case 2a:} (All nontrivial inequalities from \eqref{eqn:ogschubitope} are strict). 
		Thus if \eqref{eqn:nontrivSchub} holds, then
		\begin{align}\label{eqn:schubitope_slack}
		\sum_{i \in S} \alpha_i < \theta_D(S).
		\end{align}
		
		\begin{claim}\label{claim:box_in_row_m}
			There exists $c \in [n]$ such that $(m,c) \in D$.
		\end{claim}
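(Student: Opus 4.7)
My plan is to prove Claim~\ref{claim:box_in_row_m} by contradiction: if row $m$ of $D$ is empty, then together with the Case 2 hypothesis that $D$ has no boxes below row $m$, every box of $D$ lies strictly above row $m$. I will then test the defining inequality of $\mathcal{S}_D$ against the specific subset $S = \{m\}$ to derive a contradiction with $\alpha_m > 0$.

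Concretely, fix $S = \{m\}$ and consider ${\sf word}_{c,\{m\}}(D)$ for an arbitrary column $c \in [n]$. Under the contradiction hypothesis, for each row $r$:
\begin{itemize}
\item if $r < m$ and $(r,c) \in D$, then $r \notin S$, so $(r,c)$ contributes a ``$)$'';
\item at $r = m$, since $(m,c) \notin D$ and $m \in S$, the box contributes a ``$($'';
\item if $r > m$, then $(r,c) \notin D$ by the Case 2 assumption and $r \notin S$, so nothing is contributed.
\end{itemize}
Thus ${\sf word}_{c,\{m\}}(D)$ is a (possibly empty) string of ``$)$''s followed by a single unmatched ``$($''. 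In particular, there are no ``$\star$''s and no paired parentheses, so $\theta_D^c(\{m\}) = 0$ for every $c$ and hence $\theta_D(\{m\}) = 0$.

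Since $\alpha \in \mathcal{S}_D$, the inequality in \eqref{eqn:ogschubitope} corresponding to $S = \{m\}$ reads $\alpha_m \leq \theta_D(\{m\}) = 0$, contradicting the choice of $m$ with $\alpha_m > 0$. Therefore $(m,c) \in D$ for some $c \in [n]$. I expect no serious obstacle here: once the test set $S = \{m\}$ is chosen, the computation of $\theta_D(\{m\})$ is immediate from the definition, and the argument does not even require the Case~2a strictness hypothesis \eqref{eqn:schubitope_slack} (which will be invoked later in the proof of Case~2a, not in this claim).
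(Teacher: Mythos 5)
Your proof is correct, and it takes a genuinely different route from the paper's. The paper invokes Theorem~\ref{thm:dependent_characterization} to produce a flagged, column-injective tableau $\tau_{D,\{m\}}$ exhausting $\alpha$ over $\{m\}$; the inequality $\#\tau_{D,\{m\}}^{-1}(\{m\}) \geq \alpha_m > 0$ then forces a box filled with $m$ whose row index $r$ satisfies $r \geq m$ by flaggedness and $r \leq m$ by the Case~2 assumption. You instead bypass the tableau machinery entirely and compute $\theta_D(\{m\})$ directly from the parenthesis-word definition \eqref{eqn:ogschubitope}: under the contradiction hypothesis every ${\sf word}_{c,\{m\}}(D)$ is some number of ``$)$''s followed by a single terminal ``$($'', which has no ``$\star$''s and no matched pairs, so $\theta_D(\{m\}) = 0$ and the Schubitope inequality for $S=\{m\}$ forces $\alpha_m = 0$. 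One small thing worth making explicit is where the Case~2 hypothesis enters your argument: it is what guarantees that no ``$)$'' appears \emph{after} the ``$($'' at position $(m,c)$, for otherwise that ``$($'' could pair and the computation $\theta_D^c(\{m\})=0$ would fail; you do use this in your itemized analysis but it deserves emphasis since it is the crux. Your approach is shorter and more self-contained, relying only on the raw definition of $\mathcal{S}_D$ rather than the equivalent tableau characterization already established; the paper's approach is arguably more in the spirit of the surrounding proof, which works throughout with ${\sf FCITab}$ objects and Theorem~\ref{thm:dependent_characterization}.
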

		
		\begin{proof}[Proof of Claim \ref{claim:box_in_row_m}.]
			By Theorem \ref{thm:dependent_characterization}, there exists some $\tau_{D,\{m\}} \in {\sf FCITab}(D)$ which exhausts $\alpha$ over $\{m\}$. Then 
			\[\#\tau_{D,\{m\}}^{-1}(\{m\}) \geq \alpha_m > 0,\] 
			so $\tau_{D,\{m\}}(r,c) = m$ for some $(r,c) \in D$. Since $\tau_{D,\{m\}}$ is flagged, we must have $r \geq m$. But by the assumption of {\sf Case 2}, there are no boxes below row $m$, so $r = m$.
		\end{proof}
		
		Pick $c \in [n]$ as in Claim \ref{claim:box_in_row_m}. Then let $\tilde{D} = D \smallsetminus \{(m,c)\}$ and $\tilde{\alpha} = (\tilde{\alpha}_1,\dots,\tilde{\alpha}_n) := (\alpha_1,\dots,\alpha_{m-1},\alpha_m-1,0,\dots,0)$.
		
		\begin{claim}\label{claim:deleted_box}
			$\tilde{\alpha} \in \mathcal{S}_{\tilde{D}}$.
		\end{claim}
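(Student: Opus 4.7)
The plan is to verify the two Schubitope defining conditions for $\tilde{\alpha}$ in $\mathcal{S}_{\tilde{D}}$. The sum condition $\tilde{\alpha}_1 + \cdots + \tilde{\alpha}_n = \#\tilde{D}$ is immediate, since decrementing $\alpha_m$ by $1$ mirrors the deletion of the single box $(m,c)$ from $D$. For the inequality conditions, rather than directly recompute $\theta_{\tilde{D}}(S)$ for each $S$, I would use Theorem~\ref{thm:dependent_characterization}: it suffices to exhibit, for each $S \subseteq [n]$, a tableau $\tau_{\tilde{D},S} \in {\sf FCITab}(\tilde{D})$ which exhausts $\tilde{\alpha}$ over $S$.

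The natural candidate is $\tau_{\tilde{D},S} := \pi_{D,S}|_{\tilde{D}}$, obtained by restricting the greedy tableau from $D$ to the smaller shape $\tilde{D}$. This restriction inherits being flagged and column-injective, so the content lies in the exhaustion count $\#\tau_{\tilde{D},S}^{-1}(S) = \theta_D(S) - [\pi_{D,S}(m,c) \in S]$. The analysis then splits into three cases. When $m \in S$, we have $\pi_{D,S}(m,c) = m$ (the box contributes a ``$\star$'') and $\sum_{i \in S} \tilde{\alpha}_i = \sum_{i \in S} \alpha_i - 1$, so the exhaustion inequality reduces to $\theta_D(S) \geq \sum_{i \in S} \alpha_i$, which holds since $\alpha \in \mathcal{S}_D$. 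When $m \not\in S$ and $\pi_{D,S}(m,c) \not\in S$, restriction loses no $S$-labeled box and $\sum_{i \in S} \tilde{\alpha}_i = \sum_{i \in S} \alpha_i \leq \theta_D(S)$ suffices.

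The real obstacle is the remaining case: $m \not\in S$ but $\pi_{D,S}(m,c) = s \in S$, where restriction drops one $S$-labeled box and hence we need the stronger bound $\theta_D(S) - 1 \geq \sum_{i \in S} \alpha_i$. To handle this I would split on whether the defining inequality for $S$ is trivial or nontrivial in the sense of \eqref{eqn:nontrivSchub}. In the nontrivial case the Case~2a strictness hypothesis supplies exactly $\sum_{i \in S} \alpha_i \leq \theta_D(S) - 1$. When $\sum_{i \in S} \alpha_i = 0$, the needed bound reduces to $\theta_D(S) \geq 1$, which holds because the box $(m,c)$ itself contributes to $\pi_{D,S}^{-1}(S)$ by hypothesis. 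When $\theta_D(S) = \#D$, the bound $m \not\in S$ together with $\alpha_m \geq 1$ yields $\sum_{i \in S} \alpha_i \leq \#D - \alpha_m \leq \theta_D(S) - 1$. Combining all subcases and invoking Theorem~\ref{thm:dependent_characterization} establishes $\tilde{\alpha} \in \mathcal{S}_{\tilde{D}}$, completing the induction step needed for Case~2a.
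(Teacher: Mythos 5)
Your proof is correct and follows essentially the same route as the paper: both restrict the greedy tableau $\pi_{D,S}$ to $\tilde{D}$ and verify exhaustion via Theorem~\ref{thm:dependent_characterization}, splitting on whether the inequality for $S$ is trivial or nontrivial. The only difference is organizational — you track exactly when the deleted box $(m,c)$ loses an $S$-labeled contribution (i.e.\ when $\pi_{D,S}(m,c)\in S$), whereas the paper uniformly uses the one-sided bound $\#\tau_{\tilde{D},S}^{-1}(S)\geq\theta_D(S)-1$ and goes straight to the same three-way case split on $\sum_{i\in S}\alpha_i=0$, $\theta_D(S)=\#D$, or nontrivial.
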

		
		\begin{proof}[Proof of Claim \ref{claim:deleted_box}.]
			Since $\alpha_i = 0$ for all $i > m$, and $(m,c) \in D$, we have
			\begin{align}\label{eqn:deleted_content}
			\tilde{\alpha}_1 + \dots + \tilde{\alpha}_n = \alpha_1 + \dots + \alpha_n - 1 = \#D - 1 = \#\tilde{D}.
			\end{align}
			For each $S \subseteq [n]$, let 
			\[\tau_{\tilde{D},S} = \pi_{D,S}|_{\tilde{D}} \in {\sf FCITab}(\tilde{D})\] 
			be the restriction of $\pi_{D,S}$ to $\tilde{D}$. Then by Proposition \ref{prop:theta_equivalence},
			\begin{align}\label{eqn:theta_bound}
			\#\tau_{\tilde{D},S}^{-1}(S) \geq \#\pi_{D,S}^{-1}(S) - 1 = \theta_D(S) - 1.
			\end{align}
			If $\sum_{i \in S} \alpha_i = 0$, then
			\begin{align*}
			\sum_{i \in S} \tilde{\alpha}_i = 0 \leq \#\tau_{\tilde{D},S}^{-1}(S).
			\end{align*}
			If $\theta_D(S) = \#D$, then by \eqref{eqn:deleted_content} and \eqref{eqn:theta_bound},
			\begin{align*}
			\sum_{i \in S} \tilde{\alpha}_i \leq \tilde{\alpha}_1 + \dots + \tilde{\alpha}_n = \#D - 1 = \theta_D(S) - 1 \leq \#\tau_{\tilde{D},S}^{-1}(S).
			\end{align*}
			Finally, if $\sum_{i \in S} \alpha_i > 0$ and $\theta_D(S) < \#D$, then \eqref{eqn:schubitope_slack} must hold, so by \eqref{eqn:schubitope_slack} and \eqref{eqn:theta_bound},
			\begin{align*}
			\sum_{i \in S} \tilde{\alpha}_i \leq \sum_{i \in S} \alpha_i \leq \theta_D(S) - 1 \leq \#\tau_{\tilde{D},S}^{-1}(S).
			\end{align*}
			In all three cases, $\tau_{\tilde{D},S}$ exhausts $\tilde{\alpha}$ over $S$, so $\tilde{\alpha} \in \mathcal{S}_{\tilde{D}}$ by Theorem \ref{thm:dependent_characterization}.
		\end{proof}
		
		By construction,
		\begin{align*}
		\sum_{(i,j) \in \tilde{D}} i < \sum_{(i,j) \in D} i.
		\end{align*}
		Thus, Claim \ref{claim:deleted_box} and induction yield $\tau_{\tilde{D}} \in {\sf PerfectTab}(\tilde{D},\tilde{\alpha})$. Define $\tau_D \in {\sf Tab}(D)$ by
		\begin{align*}
		\tau_D(i,j) =
		\begin{cases}
		m &\text{if } (i,j) = (m,c), \\
		\tilde{\tau}(i,j) &\text{otherwise}.
		\end{cases}
		\end{align*}
		Clearly, $\tau_D$ is flagged, has content $\alpha$, and satisfies $\tau_D^{-1}(\{\nolabel\}) = \emptyset$. The only potential obstruction to column-injectivity is that there could be some $r \neq m$ for which $\tau_D(r,c) = m$. This is impossible, since $\tau_D$ is flagged, so such an $r$ must be greater than $m$, but by the assumption of {\sf Case 2} there are no boxes below row $m$. Thus, $\tau_D \in {\sf PerfectTab}(D,\alpha)$, so {\sf Case 2a} is complete.
		
		\noindent
		{\sf Case 2b:} (There exists a tight, nontrivial inequality in \eqref{eqn:ogschubitope}). Thus, there exists $A \subseteq [n]$ satisfying
		\begin{align}\label{eqn:tight_inequality}
		0 < \sum_{i \in A} \alpha_i = \theta_D(A) < \#D.
		\end{align}
		Let $D^{(1)} = \pi_{D,A}^{-1}(A)$ and $D^{(2)} = D \smallsetminus D^{(1)}$. Then for each $i \in [n]$, set
		\begin{align*}
		\alpha^{(1)}_i =
		\begin{cases}
		\alpha_i &\text{if } i \in A, \\
		0 &\text{if } i \not\in A
		\end{cases}
		\qquad \text{and} \qquad
		\alpha^{(2)}_i =
		\begin{cases}
		\alpha_i &\text{if } i \not\in A, \\
		0 &\text{if } i \in A.
		\end{cases}
		\end{align*}
		
		\begin{claim}\label{claim:decomposition1}
			$\alpha^{(1)} := (\alpha^{(1)}_1,\dots,\alpha^{(1)}_n) \in \mathcal{S}_{D^{(1)}}$.
		\end{claim}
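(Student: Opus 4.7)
The plan is to verify the hypotheses of Theorem \ref{thm:dependent_characterization} for the pair $(D^{(1)}, \alpha^{(1)})$, with the key candidate witness being $\pi_{D,T}$ restricted to $D^{(1)}$, where $T = S \cap A$. Everything should fall out of the tight inequality \eqref{eqn:tight_inequality} together with the monotonicity statement Corollary \ref{cor:pi_subset}.

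First I would check the content normalization. By Proposition \ref{prop:theta_equivalence} and the tightness hypothesis \eqref{eqn:tight_inequality},
\begin{align*}
\sum_{i=1}^n \alpha^{(1)}_i = \sum_{i \in A} \alpha_i = \theta_D(A) = \#\pi_{D,A}^{-1}(A) = \#D^{(1)}.
\end{align*}
This is the $\sum \alpha_i = \#D$ requirement of \eqref{eqn:ogschubitope} for the new data.

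Next, for each $S \subseteq [n]$, I would set $T = S \cap A$ and take $\tau_{D^{(1)}, S} := \pi_{D,T}|_{D^{(1)}}$. Since $\pi_{D,T} \in {\sf FCITab}(D)$ by Proposition \ref{prop:pi_in_FCITab}, and being flagged and column-injective is preserved by restriction to a subdiagram, $\tau_{D^{(1)}, S} \in {\sf FCITab}(D^{(1)})$. The crucial observation is that $T \subseteq A$, so Corollary \ref{cor:pi_subset} gives $\pi_{D,T}^{-1}(T) \subseteq \pi_{D,A}^{-1}(A) = D^{(1)}$; hence restricting $\pi_{D,T}$ to $D^{(1)}$ loses none of the boxes labelled in $T$. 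Then, using $\alpha^{(1)}_i = 0$ for $i \notin A$, the hypothesis $\alpha \in \mathcal{S}_D$, and Proposition \ref{prop:theta_equivalence}:
\begin{align*}
\sum_{i \in S} \alpha^{(1)}_i = \sum_{i \in T} \alpha_i \leq \theta_D(T) = \#\pi_{D,T}^{-1}(T) = \#\tau_{D^{(1)},S}^{-1}(T) \leq \#\tau_{D^{(1)},S}^{-1}(S).
\end{align*}
Thus $\tau_{D^{(1)}, S}$ exhausts $\alpha^{(1)}$ over $S$, and Theorem \ref{thm:dependent_characterization} gives $\alpha^{(1)} \in \mathcal{S}_{D^{(1)}}$.

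I do not foresee a substantive obstacle: the only subtle point is guaranteeing that the restriction $\pi_{D,T}|_{D^{(1)}}$ retains enough $T$-labels to exhaust $\alpha^{(1)}$ over $S$, and this is exactly what Corollary \ref{cor:pi_subset} supplies. The tightness \eqref{eqn:tight_inequality} is used only once, to match the content of $\alpha^{(1)}$ with $\#D^{(1)}$; the remaining inequalities of \eqref{eqn:ogschubitope} for $D^{(1)}$ are inherited from those of $D$ via this restriction argument.
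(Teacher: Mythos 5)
Your proof is correct and follows essentially the same route as the paper: same content normalization, same witness tableau $\pi_{D,T}|_{D^{(1)}}$ with $T=S\cap A$, same appeal to Corollary \ref{cor:pi_subset} to keep all $T$-labels inside $D^{(1)}$. The only cosmetic difference is that you unfold the step $\sum_{i\in T}\alpha_i\leq \#\pi_{D,T}^{-1}(T)$ via the Schubitope inequality and Proposition \ref{prop:theta_equivalence}, where the paper instead cites Remark \ref{rmk:tau=pi}; these are the same observation.
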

		
		\begin{proof}[Proof of Claim \ref{claim:decomposition1}.]
			By \eqref{eqn:tight_inequality} and Proposition \ref{prop:theta_equivalence}, we have
			\begin{align*}
			\alpha^{(1)}_1 + \dots + \alpha^{(1)}_n = \sum_{i \in A} \alpha_i = \theta_D(A) = \#\pi_{D,A}^{-1}(A) = \#D^{(1)}.
			\end{align*}
			Let $S \subseteq [n]$ and $T = S \cap A$. Then set 
			\[\tau_{D^{(1)},S} = \pi_{D,T}|_{D^{(1)}} \in {\sf FCITab}(D^{(1)}).\] 
			By Corollary \ref{cor:pi_subset}, 
			$\pi_{D,T}^{-1}(T) \subseteq D^{(1)}$, 
			so $\tau_{D^{(1)},S}^{-1}(T) = \pi_{D,T}^{-1}(T)$. 
			Thus, by 
Remark \ref{rmk:tau=pi}, $\tau_{D^{(1)},S}$ exhausts $\alpha$ over $T$. Hence,
			\begin{align*}
			\sum_{i \in S} \alpha^{(1)}_i = \sum_{i \in T} \alpha_i \leq \#\tau_{D^{(1)},S}^{-1}(T) \leq \#\tau_{D^{(1)},S}^{-1}(S),
			\end{align*}
			so $\tau_{D^{(1)},S}$ exhausts $\alpha^{(1)}$ over $S$, and consequently $\alpha^{(1)} \in \mathcal{S}_{D^{(1)}}$ by Theorem \ref{thm:dependent_characterization}.
		\end{proof}
		
		\begin{claim}\label{claim:decomposition2}
			$\alpha^{(2)} := (\alpha^{(2)}_1,\dots,\alpha^{(2)}_n) \in \mathcal{S}_{D^{(2)}}$.
		\end{claim}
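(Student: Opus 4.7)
The plan is to verify the Schubitope inequalities for $(D^{(2)},\alpha^{(2)})$ by using Theorem~\ref{thm:dependent_characterization}: for every $S\subseteq[n]$ I will exhibit a tableau $\tau_{D^{(2)},S}\in {\sf FCITab}(D^{(2)})$ that exhausts $\alpha^{(2)}$ over $S$. The natural candidate is built from the $\pi$-tableaux already in play, and Lemma~\ref{lemma:pi_union} is exactly the tool that lets me relate $\pi$ on the ambient diagram $D$ with $\pi$ on the residual diagram $D^{(2)}$.

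First I would dispatch the sum equality $\alpha^{(2)}_1+\cdots+\alpha^{(2)}_n=\#D^{(2)}$. By hypothesis $\sum_{i\in A}\alpha_i=\theta_D(A)$, and by Proposition~\ref{prop:theta_equivalence} $\theta_D(A)=\#\pi_{D,A}^{-1}(A)=\#D^{(1)}$, so $\sum_{i\notin A}\alpha_i=\#D-\#D^{(1)}=\#D^{(2)}$, which is exactly the required sum.

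Next, fix $S\subseteq[n]$ and set $T=S\smallsetminus A$ and $U=A\cup T=A\cup S$. Since $A$ and $T$ are disjoint, Lemma~\ref{lemma:pi_union} applies with the roles $(S,T)\mapsto(A,T)$, yielding
\[
\pi_{D,U}^{-1}(U)=\pi_{D,A}^{-1}(A)\cup\pi_{D^{(2)},T}^{-1}(T),
\]
and this union is disjoint because $\pi_{D^{(2)},T}^{-1}(T)\subseteq D^{(2)}=D\smallsetminus\pi_{D,A}^{-1}(A)$. Taking cardinalities and invoking Proposition~\ref{prop:theta_equivalence} twice gives $\#\pi_{D^{(2)},T}^{-1}(T)=\theta_D(U)-\theta_D(A)$.

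Now I take $\tau_{D^{(2)},S}:=\pi_{D^{(2)},T}\in{\sf FCITab}(D^{(2)})$ (this lies in ${\sf FCITab}$ by Proposition~\ref{prop:pi_in_FCITab}). Exhausting $\alpha^{(2)}$ over $S$ then follows from a short chain of inequalities. On the tableau side, $\#\tau_{D^{(2)},S}^{-1}(S)\geq\#\tau_{D^{(2)},S}^{-1}(T)=\theta_D(U)-\theta_D(A)$. On the content side, since $A\cap T=\emptyset$ and $\alpha\in\mathcal{S}_D$,
\[
\sum_{i\in S}\alpha^{(2)}_i=\sum_{i\in T}\alpha_i=\sum_{i\in U}\alpha_i-\sum_{i\in A}\alpha_i\leq\theta_D(U)-\theta_D(A),
\]
where in the last step I use the Schubitope inequality for $U$ and the tight equality $\sum_{i\in A}\alpha_i=\theta_D(A)$ from the case hypothesis. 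Combining these two bounds gives $\sum_{i\in S}\alpha^{(2)}_i\leq\#\tau_{D^{(2)},S}^{-1}(S)$, so $\tau_{D^{(2)},S}$ exhausts $\alpha^{(2)}$ over $S$. An appeal to Theorem~\ref{thm:dependent_characterization} then concludes $\alpha^{(2)}\in\mathcal{S}_{D^{(2)}}$.

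I don't anticipate a real obstacle: the proof is essentially symmetric to Claim~\ref{claim:decomposition1}, with Lemma~\ref{lemma:pi_union} doing the heavy lifting. The only subtle point is making sure to work with $T=S\smallsetminus A$ rather than $S$ itself, which lets the disjoint union decomposition of Lemma~\ref{lemma:pi_union} kick in cleanly and is also where the tightness hypothesis $\sum_{i\in A}\alpha_i=\theta_D(A)$ gets used to convert the Schubitope inequality for $U$ into the one needed for $(D^{(2)},S)$.
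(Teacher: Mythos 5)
Your proposal is correct and follows essentially the same route as the paper: the same sum-equality computation via Proposition~\ref{prop:theta_equivalence}, the same choice $T=S\smallsetminus A$, $U=A\cup T$, the same application of Lemma~\ref{lemma:pi_union} to identify $\pi_{D^{(2)},T}^{-1}(T)$ inside $\pi_{D,U}^{-1}(U)$, and the same witness $\tau_{D^{(2)},S}:=\pi_{D^{(2)},T}$ exhausting $\alpha^{(2)}$ over $S$ via Theorem~\ref{thm:dependent_characterization}. The only cosmetic difference is that you phrase the intermediate bound in terms of $\theta_D(U)-\theta_D(A)$ rather than $\#\pi_{D,U}^{-1}(U)-\#\pi_{D,A}^{-1}(A)$, which are equal by Proposition~\ref{prop:theta_equivalence}.
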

		
		\begin{proof}[Proof of Claim \ref{claim:decomposition2}.]
			By \eqref{eqn:tight_inequality} and Proposition \ref{prop:theta_equivalence},
			\begin{align*}
			\alpha^{(2)}_1 + \dots + \alpha^{(2)}_n = \alpha_1 + \dots + \alpha_n - \sum_{i \in A} \alpha_i = \#D - \theta_D(A) = \#D - \#\pi_{D,A}^{-1}(A) = \#D^{(2)}.
			\end{align*}
			Let $S \subseteq [n]$, $T = S \smallsetminus A$, and $U = A \cup T$. Then by Theorem \ref{thm:dependent_characterization}, Remark \ref{rmk:tau=pi}, \eqref{eqn:tight_inequality}, Proposition \ref{prop:theta_equivalence}, and Lemma \ref{lemma:pi_union}, we can write
			\begin{align*}
			\sum_{i \in S} \alpha^{(2)}_i = \sum_{i \in U} \alpha_i - \sum_{i \in A} \alpha_i &\leq \#\pi_{D,U}^{-1}(U) - \theta_D(A)
			\\&= \#\pi_{D,U}^{-1}(U) - \#\pi_{D,A}^{-1}(A) = \#\pi_{D^{(2)},T}^{-1}(T) \leq \#\pi_{D^{(2)},T}^{-1}(S).
			\end{align*}
			Thus, $\tau_{D^{(2)},S} := \pi_{D^{(2)},T}$ exhausts $\alpha^{(2)}$ over $S$, so $\alpha^{(2)} \in \mathcal{S}_{D^{(2)}}$ by Theorem \ref{thm:dependent_characterization}.
		\end{proof}
		
		By \eqref{eqn:tight_inequality} and Proposition \ref{prop:theta_equivalence}, we have 
		\[0 < \#\pi_{D,A}^{-1}(A) < \#D,\] 
		so $D^{(1)},D^{(2)} \subsetneq D$. Thus, by Claims \ref{claim:decomposition1} and \ref{claim:decomposition2} and induction, there exist 
		\[\tau_{D^{(1)}} \in {\sf PerfectTab}(D^{(1)},\alpha^{(1)}) \text{\ and \ $\tau_{D^{(2)}} \in {\sf PerfectTab}(D^{(2)},\alpha^{(2)})$}.\] 
		Define $\tau_D = \tau_{D^{(1)}} \cup \tau_{D^{(2)}} \in {\sf Tab}(D)$ by
		\begin{align*}
		\tau_D(i,j) =
		\begin{cases}
		\tau_{D^{(1)}}(i,j) &\text{if } (i,j) \in D^{(1)}, \\
		\tau_{D^{(2)}}(i,j) &\text{if } (i,j) \in D^{(2)}.
		\end{cases}
		\end{align*}
		Clearly $\tau_D$ is flagged and satisfies $\tau_D^{-1}(\{\nolabel\}) = \emptyset$. It has content $\alpha$ because $\alpha = \alpha^{(1)} + \alpha^{(2)}$, and it is column-injective because the images of $\tau_{D^{(1)}}$ and $\tau_{D^{(2)}}$ are disjoint. Therefore, $\tau_D \in {\sf PerfectTab}(D,\alpha)$ and {\sf Case 2b} is complete.
		
		This completes the proof of Theorem \ref{thm:independent_characterization}. 
	\end{proof}
	
	\subsection{Polytopal descriptions of perfect tableaux}\label{subsctn:polytopes}
	
	Given $D \subseteq [n]^2$ and $\alpha = (\alpha_1,\dots,\alpha_n) \in \mathbb{Z}_{\geq 0}^n$, define the \polyName
	\[\mathcal{P}(D,\alpha) \subseteq \mathbb{R}^{n^2}\] 
	to be the polytope with points of the form
	$
	(\alpha_{ij})_{i,j\in[n]} = (\alpha_{11},\dots,\alpha_{n1},\dots,\alpha_{1n},\dots,\alpha_{nn})
	$
	governed by the inequalities (A)-(C) below.
	\begin{enumerate}
		\item[(A)] Column-Injectivity Conditions: For all $i,j \in [n]$,
		\begin{align*}
		0 \leq \alpha_{ij} \leq 1.
		\end{align*}
		
		\item[(B)] Content Conditions: For all $i \in [n]$,
		\begin{align*}
		\sum_{j=1}^n \alpha_{ij} = \alpha_i.
		\end{align*}
		
		\item[(C)] Flag Conditions: For all $s,j \in [n]$,
		\begin{align*}
		\sum_{i=1}^s \alpha_{ij} \geq \#\{(i,j) \in D : i \leq s\}.
		\end{align*}
	\end{enumerate}
	
	\begin{proposition}\label{prop:column_content}
		Let $D \subseteq [n]^2$ and $\alpha = (\alpha_1,\dots,\alpha_n) \in \mathbb{Z}^n_{\geq 0}$ with $\alpha_1 + \dots + \alpha_n = \#D$. If $(\alpha_{ij}) \in \mathcal{P}(D,\alpha)$, then for each $j \in [n]$, we have
		\begin{align*}
		\sum_{i=1}^{n} \alpha_{ij} = \#\{(i,j) \in D : i \in [n]\}.
		\end{align*}
	\end{proposition}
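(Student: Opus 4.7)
The plan is a short double-counting argument that turns the Flag Conditions (C) into equalities by pinning down the total sum via the Content Conditions (B).

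First I would apply (C) with $s = n$, which gives, for every $j \in [n]$, the column inequality
\[
\sum_{i=1}^{n} \alpha_{ij} \;\geq\; \#\{(i,j) \in D : i \in [n]\}.
\]
Summing this over $j \in [n]$ produces
\[
\sum_{j=1}^n \sum_{i=1}^n \alpha_{ij} \;\geq\; \sum_{j=1}^n \#\{(i,j) \in D : i \in [n]\} \;=\; \#D,
\]
since the right-hand side just partitions $D$ by columns.

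Next I would compute the left-hand side via (B). Swapping the order of summation and using $\sum_{j=1}^n \alpha_{ij} = \alpha_i$ gives
\[
\sum_{j=1}^n \sum_{i=1}^n \alpha_{ij} \;=\; \sum_{i=1}^n \alpha_i \;=\; \#D,
\]
where the last equality is the hypothesis on $\alpha$.

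Thus the chain of inequalities collapses: the total of the column sums equals $\#D$, which in turn equals the total of the column sizes of $D$, while each individual column sum dominates the corresponding column size. The only way this can happen is if equality holds in every column, i.e., $\sum_{i=1}^n \alpha_{ij} = \#\{(i,j)\in D : i \in [n]\}$ for every $j \in [n]$. There is no real obstacle here; conditions (A) are not even needed, and the content identity $\sum_i \alpha_i = \#D$ is precisely what rules out slack in (C) at $s = n$.
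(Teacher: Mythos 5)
Your proof is correct and is essentially identical to the paper's argument: both apply the flag condition (C) with $s=n$ to get a columnwise inequality, then use the content conditions (B) together with $\sum_i \alpha_i = \#D$ to show the total column sums already equal $\#D$, forcing equality in every column. The paper phrases the final step as a contradiction while you present it directly, but the underlying double-counting is the same.
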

	
	\begin{proof}
		From the flag conditions (C) where $s=n$, we have that
		\begin{align*}
		\sum_{i=1}^{n} \alpha_{ij} \geq \#\{(i,j) \in D : i \in [n]\}.
		\end{align*}
		If this inequality is strict for any $j$, then using the content conditions (B), we can write
		\begin{align*}
		\#D = \alpha_1 + \dots + \alpha_n = \sum_{i=1}^{n} \sum_{j=1}^{n} \alpha_{ij} = \sum_{j=1}^{n} \sum_{i=1}^{n} \alpha_{ij} > \sum_{j=1}^{n} \#\{(i,j) \in D : i \in [n]\} = \#D,
		\end{align*}
		a contradiction.
	\end{proof}
	
	\begin{theorem}\label{thm:int_pt_characterization}
		Let $D \subseteq [n]^2$ and $\alpha = (\alpha_1,\dots,\alpha_n) \in \mathbb{Z}_{\geq 0}^n$. Then ${\sf PerfectTab}(D,\alpha) \neq \emptyset$ if and only if $\alpha_1 + \dots + \alpha_n = \#D$ and $\mathcal{P}(D,\alpha) \cap \mathbb{Z}^{n^2} \neq \emptyset$.
	\end{theorem}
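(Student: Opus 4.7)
The plan is to establish a bijection between ${\sf PerfectTab}(D,\alpha)$ and $\mathcal{P}(D,\alpha) \cap \mathbb{Z}^{n^2}$ (assuming $\alpha_1+\cdots+\alpha_n=\#D$), where integer points $(\alpha_{ij})$ correspond to the indicator data ``label $i$ appears in column $j$ of the tableau''. Under this reading, condition (A) encodes column-injectivity, condition (B) encodes content, and condition (C) encodes flaggedness.

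For the forward direction, assume $\tau \in {\sf PerfectTab}(D,\alpha)$. Proposition \ref{prop:proper_content} gives $\alpha_1+\cdots+\alpha_n=\#D$. Define $\alpha_{ij}=1$ if $\tau^{-1}(\{i\})$ meets column $j$, and $\alpha_{ij}=0$ otherwise. Column-injectivity of $\tau$ gives (A); the content condition $\#\tau^{-1}(\{i\})=\alpha_i$ gives (B); and for (C), because $\tau$ is flagged, every box $(i,j)\in D$ with $i\leq s$ carries a label $\leq s$, so $\sum_{i=1}^s \alpha_{ij}$, which counts the distinct labels $\leq s$ appearing in column $j$ of $\tau$, is at least $\#\{(i,j)\in D : i\leq s\}$.

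For the reverse direction, suppose $(\alpha_{ij})\in \mathcal{P}(D,\alpha)\cap \mathbb{Z}^{n^2}$ and $\alpha_1+\cdots+\alpha_n=\#D$. By (A), each $\alpha_{ij}\in\{0,1\}$. For each column $j$, let $r_1<\cdots<r_{k_j}$ be the rows of the boxes $(r,j)\in D$, and let $\ell_1<\cdots<\ell_{m_j}$ be the labels $\{i : \alpha_{ij}=1\}$. Proposition \ref{prop:column_content} (which uses the hypothesis $\sum \alpha_i=\#D$) gives $m_j=k_j$. Define $\tau(r_t,j)=\ell_t$. This $\tau$ is automatically column-injective and has content $\alpha$ by (B); the flag condition $\ell_t \leq r_t$ is exactly the content of (C) at $s=r_t$, which reads $\sum_{i=1}^{r_t}\alpha_{ij}\geq t$, i.e.\ at least $t$ labels are $\leq r_t$, forcing $\ell_t\leq r_t$. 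Hence $\tau\in {\sf PerfectTab}(D,\alpha)$.

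No step is a real obstacle; the main thing to be careful about is that condition (C) only governs the \emph{multiset} of labels in each column, not their placement, so one must invoke Proposition \ref{prop:column_content} to see that the number of labels matches the number of boxes, and then use the ``increasing placement'' from top to bottom to convert the sum-inequality in (C) into the pointwise flag inequality $\ell_t \leq r_t$. Finally, note that if $\alpha_1+\cdots+\alpha_n\neq \#D$, then by summing (B) over $i$ and applying (C) with $s=n$ one gets a contradiction, so the condition $\sum \alpha_i=\#D$ is not only sufficient but appears naturally; Proposition \ref{prop:proper_content} takes care of it on the tableau side.
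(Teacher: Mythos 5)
Your argument is correct and matches the paper's proof in all essential respects: you use the same identification of integer points with indicator data (label $i$ in column $j$), invoke Propositions \ref{prop:proper_content} and \ref{prop:column_content} in the same places, and your explicit increasing assignment $\tau(r_t,j)=\ell_t$ is precisely the paper's order-preserving bijection $\varphi_j$ made concrete. Nothing substantively different here.
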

	
	\begin{proof}
		($\Rightarrow$) By Proposition \ref{prop:proper_content}, we have $\alpha_1 + \dots + \alpha_n = \#D$.
		Let $\tau \in {\sf PerfectTab}(D,\alpha)$. Then for each $i,j \in [n]$, set
		\begin{align*}
		\alpha_{ij} = \#\{r \in [n] : \tau(r,j) = i\} =
		\begin{cases}
		1 &\text{if } \tau(r,j) = i \text{ for some } r \in [n], \\
		0 &\text{otherwise},
		\end{cases}
		\end{align*}
		where the second equality follows from the fact that $\tau$ is column-injective.
		
		\begin{claim}\label{claim:tableau_to_int_pt}
			$(\alpha_{ij}) \in \mathcal{P}(D,\alpha) \cap \mathbb{Z}^{n^2}$.
		\end{claim}
		
		\begin{proof}[Proof of Claim \ref{claim:tableau_to_int_pt}.]
			Clearly $(\alpha_{ij}) \in \mathbb{Z}^{n^2}$ and the column-injectivity conditions (A) hold. Since $\tau$ has content $\alpha$,
			\begin{align*}
			\sum_{j=1}^{n} \alpha_{ij} = \sum_{j=1}^{n} \#\{r \in [n] : \tau(r,j) = i\} = \#\tau^{-1}(\{i\}) = \alpha_i
			\end{align*}
			for each $i \in [n]$, so the content conditions (B) hold. Finally, for each $s,j \in [n]$, we have
			\begin{align*}
			\sum_{i=1}^{s} \alpha_{ij} = \#\{r \in [n] : \tau(r,j) \leq s\} \geq \#\{(r,j) \in D : r \leq s\}
			\end{align*}
			since $\tau$ is flagged. Thus, the flag conditions (C) also hold.
		\end{proof}
		
		\noindent
		($\Leftarrow$) Let $(\alpha_{ij}) \in \mathcal{P}(D,\alpha) \cap \mathbb{Z}^{n^2}$. By the column-injectivity conditions (A), $\alpha_{ij} \in \{0,1\}$. Thus, by Proposition \ref{prop:column_content}, there exists for each $j \in [n]$ a bijection
		\begin{align*}
		\varphi_j : \{i \in [n] : (i,j) \in D\} \rightarrow \{i \in [n] : \alpha_{ij} = 1\}
		\end{align*}
		that is order-preserving, i.e., $\varphi_j$ satisfies $\varphi_j(i) < \varphi_j(i')$ whenever $i < i'$. Define $\tau \in {\sf Tab}(D)$ by $\tau(i,j) = \varphi_j(i)$.
		
		\begin{claim}\label{claim:increasing_tableau}
			$\tau \in {\sf PerfectTab}(D,\alpha)$.
		\end{claim}
		
		\begin{proof}[Proof of Claim \ref{claim:increasing_tableau}]
			By construction, $\tau^{-1}(\{\nolabel\}) = \emptyset$. Since $\varphi_j$ is injective and order-preserving, $\tau$ is strictly increasing along columns, hence column-injective. For each $i \in [n]$, the content conditions (B) imply
			\begin{align*}
			\tau^{-1}(\{i\}) = \sum_{j=1}^{n} \#\varphi_j^{-1}(\{i\}) = \sum_{j=1}^{n} \alpha_{ij} = \alpha_i,
			\end{align*}
			so $\tau$ has content $\alpha$. Finally, the flag conditions (C) show that for each $s,j \in [n]$,
			\begin{align*}
			\#\{i \leq s : (i,j) \in D\} \leq \sum_{i=1}^{s} \alpha_{ij} = \#\{i \leq s : \alpha_{ij} = 1\},
			\end{align*}
			so $\varphi_j(i) \leq i$ for each $(i,j) \in D$ since $\varphi_j$ is order-preserving. Thus, 
			$\tau(i,j) = \varphi_j(i) \leq i$ 
			and $\tau$ is flagged. Hence, $\tau \in {\sf PerfectTab}(D,\alpha)$.
		\end{proof}
		
		This shows that ${\sf PerfectTab}(D,\alpha) \neq \emptyset$ and completes the proof of the theorem.
	\end{proof}
	
	\begin{remark}\label{rmk:injective_iff_increasing}
		The proof of Claim \ref{claim:increasing_tableau} shows that if ${\sf PerfectTab}(D,\alpha) \neq \emptyset$, then we can find $\tau \in {\sf PerfectTab}(D,\alpha)$ which is not only column-injective, but also strictly increasing along columns, so $\tau(i,j) < \tau(i',j)$ whenever $i < i'$. Thus ${\sf PerfectTab}(D,\alpha) \neq \emptyset$ if and only if ${\sf PerfectTab}(D,\alpha)_{\downarrow} \neq \emptyset$.
	\end{remark}
	
	Theorem \ref{thm:int_pt_characterization} formulates the problem of determining if
	${\sf PerfectTab}(D,\alpha)\neq \emptyset$ in terms of feasibility of an integer linear programming problem. 
	In general, integral feasibility is ${\sf NP}$-{\sf complete}. We now show that in our case,
	feasibility of the problem is equivalent to feasibility of its LP-relaxation:
	
	\begin{theorem}\label{thm:relaxation_equivalence}
		Let $D \subseteq [n]^2$ and $\alpha = (\alpha_1,\dots,\alpha_n) \in \mathbb{Z}^n$ with $\alpha_1 + \dots + \alpha_n = \#D$. Then $\mathcal{P}(D,\alpha) \cap \mathbb{Z}^{n^2} \neq \emptyset$ if and only if $\mathcal{P}(D,\alpha) \neq \emptyset$.
	\end{theorem}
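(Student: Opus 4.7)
The forward direction is trivial. For the reverse, my plan is to show the constraint matrix defining $\mathcal{P}(D,\alpha)$ is totally unimodular (TU). Combined with the integrality of the right-hand sides of (A), (B), (C), the Hoffman--Kruskal theorem then forces every vertex of $\mathcal{P}(D,\alpha)$ to be integral, and since $\mathcal{P}(D,\alpha) \subseteq [0,1]^{n^2}$ is bounded by (A), any nonempty $\mathcal{P}(D,\alpha)$ contains a vertex, hence a lattice point.

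Let $A$ be the coefficient matrix of (B) and (C); the box conditions (A) contribute single-entry rows that trivially preserve TU, so it suffices to handle $A$. I would index columns of $A$ by variables $(i,j) \in [n]^2$ and rows by content labels $C_i$ (with $A_{C_i,(i',j')} = 1$ iff $i'=i$, else $0$) and flag labels $F_{s,j}$ (with $A_{F_{s,j},(i',j')} = 1$ iff $j'=j$ and $i' \leq s$, else $0$). I then plan to verify TU via Ghouila-Houri's criterion (see \cite{Schrijver}): for any subset $R$ of rows, exhibit a partition $R = R_1 \cup R_2$ such that for every variable $(i,j)$, the signed difference $\sum_{r \in R_1} A_{r,(i,j)} - \sum_{r \in R_2} A_{r,(i,j)}$ lies in $\{-1,0,1\}$.

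The key observation exploits the ``prefix'' structure of flag rows within each fixed column $j$: I would order the flag rows $F_{s,j} \in R$ by decreasing $s$ and assign them alternating signs $+,-,+,\dots$; then the signed flag contribution at a variable $(i,j)$ is a prefix sum of alternating $\pm 1$ (since the indices with $s \geq i$ form a prefix in this ordering), which telescopes to $0$ or $+1$. Placing every content row $C_i \in R$ in $R_2$ contributes $0$ or $-1$ to column $(i,j)$, and summing the two pieces yields a total in $\{-1,0,1\}$, as required. The main obstacle is identifying this signing; I expect it works because each flag row belongs to a unique column $j$ (so the per-column alternations are globally consistent), while content row contributions decouple across columns. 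Once TU is established, the argument concludes by the standard polyhedral reasoning in the first paragraph.
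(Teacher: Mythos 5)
Your proof is correct and takes essentially the same route as the paper's first proof: both establish total unimodularity of the constraint matrix and conclude via integrality of the vertices of a bounded, nonempty polyhedron defined by integral data. The only difference is in how TU is verified---the paper row-reduces the flag block to a matrix with at most one $+1$ and one $-1$ per column and invokes the corresponding textbook criterion, whereas you apply the Ghouila--Houri criterion with alternating signs on the flag rows within each fixed column; both exploit the same telescoping structure of the nested prefix-sum flag constraints, so they are two phrasings of the same observation.
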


	For reasons given in the Introduction, we provide two proofs of this fact. 
		
	\begin{proof}[Proof 1 of Theorem \ref{thm:relaxation_equivalence}.]

We write the constraints (A)-(C) in the form $M\vec{x} \leq \vec{b}$ where $M$ is
a $(3n^2+n)\times n^2$ block matrix and $\vec{b}$ is a vector of length $3n^2+n$ of the form
\[M=\left(\begin{matrix}
M_{A_1} \\
M_{A_2} \\
M_{B} \\
M_{C}
\end{matrix}
\right) \mbox{ and } \vec{b}=(b_i)_{i=1}^{3n^2+n}.\] Let $\vec{b}_I$ denote the subvector of $\vec{b}$ containing those $b_i$ with $i\in I\subseteq[3n^2+n]$. 
Also, we use the following coordinatization: 
\[\vec{x}=(\alpha_{11},\ldots,\alpha_{n1},\alpha_{12},\ldots,\alpha_{n2},\ldots,\alpha_{nn})^T.\]  

\begin{itemize}
\item  $M_{A_1}$ is the $n^2\times n^2$ block corresponding to the condition $0\leq\alpha_{ij}$ from (A). Thus, $M_{A_1}=-I_{n^2}$ and $b_r=0$ for $r\in[1,n^2]$. 
\item $M_{A_2}$ is  the $n^2\times n^2$ block corresponding to $\alpha_{ij}\leq 1$ from (A). Hence, 
$M_{A_2}=I_{n^2}$ and $b_r=1$ for $r\in[n^2+1,2n^2]$. 
\item $M_{C}$ is the $n^2\times n^2$ matrix for (C). Thus, 
\[M_{C}=\left(\begin{matrix}
M_{C_T} & 0 & \ldots & 0\\
0 & M_{C_T} & \ldots & 0\\
\vdots & \vdots & \ddots &\vdots \\
0 & 0 & \ldots & M_{C_T} 
\end{matrix}
\right)\] where $M_{C_T}=(c_{ij})_{1\leq i,j\leq n}$ is lower triangular such that $c_{ij}=-1$ for $i\geq j$. 
Also, \[b_{(2n^2+n)+n(j-1)+s}=-\#\{(i,j) \in D : i \leq s\}, 
\text{ \ for $s,j\in[n]$.}\]
\item $M_{B}$ is the $n\times n^2$ block encoding (B). Take $M_{B}=\left(\begin{matrix}
I_n & I_n & \ldots & I_n
\end{matrix}
\right)$ and $\vec{b}_{[2n^2+1,2n^2+n]}=(\alpha_i)_{i\in[n]}$. 
Clearly $M_{B}\vec{x}\leq (\alpha_i)_{i\in[n]}$ encodes the inequalities $\sum_{j=1}^n \alpha_{ij}\leq \alpha_i$.
Now, (B) requires $\sum_{j=1}^n \alpha_{ij}= \alpha_i$. However,
$\alpha_1 + \dots + \alpha_n = \#D$ ensures that \[\left(\begin{matrix}
M_{B} \\
M_{C}
\end{matrix}
\right)\vec{x} \leq  \vec{b}_{[2n^2+1,3n^2+n]}
\mbox{  \ only if \  }
M_{B}\vec{x}= (\alpha_i)_{i\in[n]}.\]
\end{itemize}
 Summarizing, $M\vec{x}\leq\vec{b}$ indeed encodes (A)-(C).

\begin{example}
For $n=2$ consider $\vec{x}=(\alpha_{11},\alpha_{21},\alpha_{12},\alpha_{22})^T$ with $D=\{(1,1),(1,2),(2,2)\}\subset [2]\times[2]$ and $\alpha=(2,1)$.

We have
\[M_{A_1}\vec{x}=\left(\begin{matrix}
-1 & 0 & 0 & 0\\
0 & -1 & 0 & 0\\
0 & 0 & -1 & 0\\
0 & 0 & 0 & -1
\end{matrix} 
\right)\left(\begin{matrix}
\alpha_{11} \\
\alpha_{21} \\
\alpha_{12} \\
\alpha_{22} 
\end{matrix} 
\right)\leq \left(\begin{matrix}
0 \\
0 \\
0 \\
0 
\end{matrix} 
\right)\]
\[M_{A_2}\vec{x}=\left(\begin{matrix}
1 & 0 & 0 & 0\\
0 & 1 & 0 & 0\\
0 & 0 & 1 & 0\\
0 & 0 & 0 & 1
\end{matrix} 
\right)\left(\begin{matrix}
\alpha_{11} \\
\alpha_{21} \\
\alpha_{12} \\
\alpha_{22} 
\end{matrix} 
\right)\leq \left(\begin{matrix}
1 \\
1 \\
1 \\
1 
\end{matrix} 
\right)\]
\[M_{B}\vec{x}=\left(\begin{matrix}
1 & 0 & 1 & 0\\
0 & 1 & 0 & 1
\end{matrix} 
\right)\left(\begin{matrix}
\alpha_{11} \\
\alpha_{21} \\
\alpha_{12} \\
\alpha_{22} 
\end{matrix} 
\right)\leq \left(\begin{matrix}
\alpha_1 \\
\alpha_2 
\end{matrix} 
\right)
= \left(\begin{matrix}
2 \\
1 
\end{matrix} 
\right)\]
\[M_{C}\vec{x}=\left(\begin{matrix}
-1 & 0 & 0 & 0\\
-1 & -1 & 0 & 0\\
0 & 0 & -1 & 0\\
0 & 0 & -1 & -1
\end{matrix} 
\right)\left(\begin{matrix}
\alpha_{11} \\
\alpha_{21} \\
\alpha_{12} \\
\alpha_{22} 
\end{matrix} 
\right)\leq \left(\begin{matrix}
-\#\{(i,1) \in D : i \leq 1\}\\
-\#\{(i,1) \in D : i \leq 2\}\\
-\#\{(i,2) \in D : i \leq 1\}\\
-\#\{(i,2) \in D : i \leq 2\}
\end{matrix} 
\right)
= \left(\begin{matrix}
-1 \\
-1 \\
-1 \\
-2 
\end{matrix} 
\right)\]
\end{example}

\begin{theorem}\label{thm:unimod}
$M$ is a totally unimodular matrix; that is, every minor of $M$ equals $0,1$, or $-1$.
\end{theorem}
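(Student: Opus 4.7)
The plan is strong induction on the size $k$ of a square submatrix $N$ of $M$, proving $\det N \in \{-1,0,1\}$. The base case $k=1$ is immediate since every entry of $M$ lies in $\{-1,0,1\}$. For the inductive step, I first dispense with any row of $N$ that comes from $M_{A_1}$ or $M_{A_2}$: such a row of $M$ has a single nonzero entry equal to $\pm 1$, so its restriction to the columns of $N$ is either the zero row (giving $\det N = 0$) or has a single $\pm 1$ entry, in which case Laplace expansion along it yields $\det N = \pm \det N'$ for a $(k-1)\times(k-1)$ submatrix $N'$ of $M$, and the inductive hypothesis applies.

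In the remaining case, every row of $N$ comes from $M_B \cup M_C$. The key structural observation is that within a fixed column-block $j$, the $M_C$-row indexed by $(j,s)$ has $-1$'s precisely in positions $(1,j),(2,j),\dots,(s,j)$, so the $M_C$-rows within block $j$ have totally nested supports. If $(j,s_1),\dots,(j,s_r)$ with $s_1<\dots<s_r$ are the $M_C$-rows of $N$ in block $j$, I keep $(j,s_1)$ and replace each $(j,s_k)$ by the difference $(j,s_k)-(j,s_{k-1})$ of the \emph{original} rows for $k\geq 2$. This is a single unimodular row operation (left multiplication by a lower bidiagonal matrix with $1$'s on the diagonal and determinant $1$), so $\det N$ is preserved. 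After applying this in every block, the transformed $M_C$-rows still have entries in $\{-1,0\}$ and now have pairwise disjoint supports, so each column $(i,j)$ of the transformed matrix $\tilde N$ carries at most one $+1$ (from the $M_B$-row indexed by $i$, if it is in $N$) and at most one $-1$ (from the unique transformed $M_C$-row in block $j$ whose support contains $i$, if any).

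A matrix with entries in $\{-1,0,1\}$ and at most one $+1$ and at most one $-1$ per column is totally unimodular---by adjoining extra rows one recovers the signed vertex-edge incidence matrix of a directed multigraph, which is TU, and deleting rows preserves TU. Hence $\det \tilde N \in \{-1,0,1\}$ and thus $\det N = \det \tilde N \in \{-1,0,1\}$, closing the induction. The main bookkeeping hurdle I expect is verifying that the telescoping step produces the claimed disjoint-support, $\{-1,0\}$-valued rows; this requires using the original (not already-modified) rows in the definition, which can be arranged by processing each block from largest threshold downward, or equivalently by exhibiting the unimodular transformation $T$ directly and checking its effect on the nested-support rows.
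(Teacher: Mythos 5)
Your proposal is correct and takes essentially the same approach as the paper: you peel off the $M_{A_1}$/$M_{A_2}$ rows by cofactor expansion and reduce the remaining case to a $(0,\pm 1)$-matrix having at most one $+1$ and at most one $-1$ per column, obtained by telescoping the nested $M_C$-rows inside each column block and then invoking the standard incidence-matrix TU criterion. The only cosmetic difference is that you induct on the overall size of the submatrix while the paper inducts on the number of $A$-rows present; since your ``no $A$-rows'' branch does not actually invoke the inductive hypothesis, the two organizations are interchangeable.
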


\begin{proof}
Suppose $M'$ is a square submatrix of $M$ with $k$ rows from $M_{A_1}$ or $M_{A_2}$. We show by induction on $k$ that $\det(M')\in\{0,\pm 1\}$.

For the base case $k=0$, consider $M'$ an $\ell\times \ell$ submatrix of $M$ with only rows from $M_{B}$ and $M_{C}$. Let $M'_B,M'_C$ be the corresponding blocks of $M'$, i.e. $M'=\left(\begin{matrix}
M'_{B} \\
M'_{C}
\end{matrix}
\right)$ where $M'_B$, or $M'_C$, is the submatrix of $M_B$, or $M_C$ respectively, using the rows and columns of $M'$. 
Since ${M}_B$ has one $1$ per column, $M'_B$ has at most one $1$ per column. 
By the form of ${M}_C$, it is straightforward to row reduce $M'_C$ to obtain a $(0,-1)$-matrix $M''_{C}$ 
with at most one $-1$ in each column.
Let $M''=\left(\begin{matrix}
M'_{B} \\
M''_{C}
\end{matrix}
\right)$, an $\ell\times \ell$ matrix. 
It is textbook (see \cite[Theorem~13.3]{Combopt}) that if a $(0,\pm1)$-matrix $N$
has at most one $1$ and at most one $-1$ in each column, $N$ is totally unimodular; hence  
$\det(M')=\pm\det(M'')\in \{0,-1,1\}$ as desired. Thus the base case holds.

Now suppose $M'$ is a square submatrix of $M$ that contains $k\geq1$ rows from $M_{A_1}$ or $M_{A_2}$. Let $R$ be such a row from $M_{A_1}$ or $M_{A_2}$. If $R$ contains all $0$'s, $\det(M')=0$, and we are done.
Otherwise $R$ contains a single $\pm1$. 
Hence the cofactor expansion for $\det(M')$ along $R$ gives $\det(M')=\pm \det(M'')$ where $M''$ is a submatrix of $M$ with $k-1$ rows from $M_{A_1}$ or $M_{A_2}$. So by induction, $\det(M')\in\{0,\pm 1\}$, as required.
\end{proof} 

Since $M$ is totally unimodular then any vertices of $M\vec{x}\leq \vec{b}$ are integral
\cite[Theorem~13.2]{Combopt}.
Thus, if ${\mathcal P}(D,\alpha)\neq \emptyset$ then its vertices are integral, i.e.,
${\mathcal P}(D,\alpha)\cap {\mathbb Z}^{n^2}\neq \emptyset$.
\end{proof}
	
	\begin{proof}[Proof 2 of Theorem \ref{thm:relaxation_equivalence}.]
		Given a point $(\alpha_{ij}) \in \mathcal{P}(D,\alpha)$, we say a pair of sequences
		\begin{align*}
		(r_1,\dots,r_{m+1};c_1,\dots,c_m) \in [n]^{m+1} \times [n]^m,
		\end{align*}
		for some $m \in \mathbb{Z}_{>0}$, is \emph{stable} at $(\alpha_{ij})$ if the properties (i)-(iv) below hold. The purpose of each property will become clear later.
		\begin{enumerate}
			\item[(i)] $r_{m+1} = r_1$.
			
			\item[(ii)] For all $k \in [m]$, $\alpha_{r_kc_k},\alpha_{r_{k+1}c_k} \not\in \mathbb{Z}$.
			
			\item[(iii)] For all $k \in [m]$, if $i > r_{k+1}$ and $\alpha_{ic_k} \not\in \mathbb{Z}$, then $i = r_k$.
			
			\item[(iv)] There exists $(r,c) \in [n]^2$ such that
			\begin{align*}
			\#\{k \in [m] : (r,c) = (r_k,c_k)\} \neq \#\{k \in [m] : (r,c) = (r_{k+1},c_k)\}.
			\end{align*}
		\end{enumerate}
		
		\begin{claim}\label{claim:stable_exists}
			For any $(\alpha_{ij}) \in \mathcal{P}(D,\alpha) \smallsetminus \mathbb{Z}^{n^2}$, there exists $(r_1,\dots,r_{m+1};c_1,\dots,c_m)$ stable at $(\alpha_{ij})$.
		\end{claim}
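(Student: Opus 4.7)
The plan is to construct the stable sequence via a simple cycle in an auxiliary multigraph $\widehat{B}$ on the row set $[n]$. Let $N = \{(r,c) : \alpha_{rc} \notin \mathbb{Z}\}$, which is nonempty by hypothesis. The content conditions~(B) together with Proposition~\ref{prop:column_content} imply that all row sums $\sum_c \alpha_{rc}$ and all column sums $\sum_r \alpha_{rc}$ are integers, so every row and every column that has one non-integer entry has at least two. For each column $c$ with $N_c := \{r : (r,c) \in N\} \neq \emptyset$, write $t_c := \max N_c$. A key observation is that given $(r_k, c_k) \in N$, conditions (ii) and (iii) together uniquely specify $r_{k+1}$: if $r_k \neq t_{c_k}$ then $r_{k+1} = t_{c_k}$, and if $r_k = t_{c_k}$ then $r_{k+1}$ must be the second-largest element of $N_{c_k}$. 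In particular $\{r_k, r_{k+1}\}$ always contains $t_{c_k}$, so every column visited by a stable sequence uses its top non-integer edge $(t_{c_k}, c_k)$.

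I would encode this structural constraint by forming $\widehat{B}$ with one edge $\{r, t_c\}$ labeled $c$ for each $(r,c) \in N$ with $r \neq t_c$. A direct count using $|N_c| \geq 2$ shows every vertex $r$ of $\widehat B$ has degree at least $\deg_B(r) \geq 2$, where $B$ is the bipartite graph of $N$; hence $\widehat B$ is not a forest and contains a simple cycle $C$. Expanding each edge of $C$ into a pair of edges of $B$---appending at each column $c$ encountered the top edge $(t_c, c)$ to the chosen non-top edge---produces a closed walk in $B$ alternating non-top and top edges. Reading off the non-integer entries yields the candidate sequence $(r_1, \ldots, r_{m+1}; c_1, \ldots, c_m)$ with $r_{m+1} = r_1$; (i) holds and (ii) is immediate since every entry encountered lies in $N$.

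Conditions (iii) and (iv) are the subtle part. For (iii), I would orient $C$ so that every column move either exits via the top row $t_{c_k}$ (automatic under (iii)) or, when entering at $t_{c_k}$, exits at the second-max $s_{c_k}$. At a cycle vertex of $\widehat B$ both of whose incident cycle-edges are ``top-side'' (both edges have that vertex equal to $t_c$ for their labels), one is forced to make such a reversed traversal, which (iii) allows only when the non-top partner is the sub-top; handling this likely requires restricting certain edges of $\widehat B$ to sub-top pairs or a local parity argument at each such vertex. For (iv), simplicity of $C$ implies that at each column visited, the start state and the intermediate state fall on opposite sides (top vs.\ chosen non-top) of the column; when all column labels along $C$ are distinct this immediately makes the start and intermediate sets disjoint, giving (iv) with room to spare. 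In the remaining case of repeated column labels along $C$, one analyzes each repeated column separately and argues that because $C$ is a non-trivial cycle of $\widehat B$ (rather than a back-and-forth traversal), some entry must occur with different start and intermediate counts. I expect the principal obstacle to be formalizing this orientation argument and checking (iv) in the presence of repeated column labels; once that is done, (i)--(iv) all follow.
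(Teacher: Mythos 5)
Your $\widehat B$ construction captures the right structural fact (every stable column move touches $t_{c_k}$), and the minimum-degree argument showing $\widehat B$ contains a simple cycle is sound. But the two problems you flag yourself---orienting a chosen cycle so that (iii) holds, and proving (iv) when column labels repeat---are genuine gaps, not bookkeeping. For (iii): not every simple cycle of $\widehat B$ can be oriented. Take $N_{c_1}=\{1,2,5\}$, $N_{c_2}=\{3,4,5\}$, $N_{c_3}=\{1,3\}$, $N_{c_4}=\{2,4\}$; every row and every column has at least two non-integer positions, as required, and $\widehat B$ contains the triangle $1$--$5$--$3$--$1$ with edge labels $c_1,c_2,c_3$. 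Traversing $1\to5\to3$ violates (iii) at the $c_2$-step, since $4\in N_{c_2}$ lies strictly between $3$ and $5=t_{c_2}$; the reverse direction fails symmetrically at the $c_1$-step, since $2\in N_{c_1}$ lies between $1$ and $5=t_{c_1}$. A good cycle ($5$--$2$--$4$--$5$) does exist here, but your proposal gives no mechanism for finding one: restricting $\widehat B$ to sub-top edges leaves only one edge per column, so the degree bound---and with it the guaranteed cycle---disappears, while a ``local parity'' repair has to contend with the fact that \emph{both} orientations through a bad vertex fail.

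The paper avoids this entirely by constructing a good cycle greedily. Start at any $(r_1,c_1)$ with $\alpha_{r_1c_1}\notin\mathbb Z$. Given $(r_k,c_k)$, set $r_{k+1}=\max\{i\neq r_k:\alpha_{ic_k}\notin\mathbb Z\}$, which is well-defined because column sums are integers (Proposition~\ref{prop:column_content}); then choose any $c_{k+1}\neq c_k$ with $\alpha_{r_{k+1}c_{k+1}}\notin\mathbb Z$, available because row sums are integers. Stop at the first repeated row index $r_{m+1}=r_\ell$ and discard the first $\ell-1$ steps. After trimming, $r_1,\dots,r_m$ are distinct, so (i)--(iii) are built in by construction, and (iv) is immediate at $(r,c):=(r_2,c_2)$: it occurs exactly once among the $(r_k,c_k)$ and never among the $(r_{k+1},c_k)$, because $r_2\neq r_k$ for $k\neq 2$ and $c_1\neq c_2$. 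The greedy choice of $r_{k+1}$ as the largest available row means the trail never passes over an unused non-integer entry, so there is no orientation to choose and no bad vertex to repair; it produces, directly, exactly the kind of simple cycle your approach was hunting for.
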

		
		\begin{proof}[Proof of Claim \ref{claim:stable_exists}.]
			Choose $r_1,c_1$ such that $\alpha_{r_1c_1} \not\in \mathbb{Z}$, and assume that we have fixed $r_k,c_k$ such that $\alpha_{r_kc_k} \not\in \mathbb{Z}$. By Proposition \ref{prop:column_content}, we have
			\begin{align*}
			\sum_{i=1}^{n} \alpha_{ic_k} = \#\{(i,c_k) \in D : i \in [n]\} \in \mathbb{Z}.
			\end{align*}
			Thus, as $\alpha_{r_kc_k} \not\in \mathbb{Z}$, it makes sense to set
			\begin{align}\label{eqn:next_stable_row}
			r_{k+1} = \max\{i \neq r_k : \alpha_{ic_k} \not\in \mathbb{Z}\}.
			\end{align}
			If $r_{k+1} = r_{\ell}$ for some $\ell \in [k]$, then end the construction of these sequences. Otherwise, the content conditions (B) say that
			\begin{align*}
			\sum_{j=1}^{n} \alpha_{r_{k+1}j} = \alpha_{r_{k+1}} \in \mathbb{Z},
			\end{align*}
			and since $\alpha_{r_{k+1}c_k} \not\in \mathbb{Z}$, we can choose $c_{k+1} \neq c_k$ such that $\alpha_{r_{k+1}c_{k+1}} \not\in \mathbb{Z}$, completing the recursive definition. By the pigeonhole principle, this process must halt, yielding sequences $r_1,\dots,r_{\ell},\dots,r_{m+1}$ and $c_1,\dots,c_{\ell},\dots,c_m$ with $r_{m+1} = r_{\ell}$.
			
			By disregarding the first $\ell-1$ terms of each sequence, we may assume $\ell = 1$ without loss of generality. Then we assert that $(r_1,\dots,r_{m+1};c_1,\dots,c_m)$ is stable at $(\alpha_{ij})$. Indeed, (i) and (ii) are immediate from the construction, (iii) follows from \eqref{eqn:next_stable_row}, and (iv) holds because $(r,c) := (r_2,c_2)$ exists and satisfies
			\begin{align*}
			\#\{k \in [m] : (r,c) = (r_k,c_k)\} = 1
			\qquad \text{and} \qquad
			\#\{k \in [m] : (r,c) = (r_{k+1},c_k)\} = 0,
			\end{align*}
			since $c_2 \neq c_1$ and $r_2 \neq r_k$ for all $k \neq 2$.
		\end{proof}
		
		We now fix a pair of sequences $(r_1,\dots,r_{m+1};c_1,\dots,c_m)$. Given $(\alpha_{ij})$ and $\delta > 0$, set
		\begin{align}
                  \label{eqn:alphadeltadef}
		\alpha^{\delta}_{ij} = \alpha_{ij} + \delta[\#\{k \in [m] : (i,j) = (r_k,c_k)\} - \#\{k \in [m] : (i,j) = (r_{k+1},c_k)\}].
		\end{align}
		
		\begin{claim}\label{claim:is_stable}
			If $(r_1,\dots,r_{m+1};c_1,\dots,c_m)$ is stable at $(\alpha_{ij}) \in \mathcal{P}(D,\alpha)$, then $(\alpha^{\delta}_{ij}) \in \mathcal{P}(D,\alpha)$ for some $\delta > 0$.
		\end{claim}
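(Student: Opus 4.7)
The plan is to view $(\alpha^{\delta}_{ij})$ as a small perturbation of $(\alpha_{ij})$ in a feasible direction of $\mathcal{P}(D,\alpha)$, and to verify that each of the constraints (A), (B), (C) persists for all sufficiently small $\delta > 0$. Since there are finitely many constraints, it suffices to show that (B) is preserved exactly by the perturbation, and that (A) and (C) admit enough slack at $(\alpha_{ij})$ in the directions where they could be violated.

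For (B), the net change in row $i$ is $\delta \cdot (\#\{k \in [m] : r_k = i\} - \#\{k \in [m] : r_{k+1} = i\})$, which vanishes because property (i) forces $r_{m+1} = r_1$, making the multisets $\{r_1,\ldots,r_m\}$ and $\{r_2,\ldots,r_{m+1}\}$ coincide. For (A), the only entries changed are those at positions $(r_k,c_k)$ or $(r_{k+1},c_k)$ for some $k \in [m]$, and property (ii) guarantees each such $\alpha_{ij}$ lies in the open interval $(0,1)$, so they remain in $[0,1]$ once $\delta$ is small.

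The main obstacle is (C). Fix $c,s \in [n]$; the change in $\sum_{i=1}^{s} \alpha_{ic}$ equals $\delta(|I|-|J|)$ with $I = \{k \in [m] : c_k = c,\ r_k \leq s < r_{k+1}\}$ and $J = \{k \in [m] : c_k = c,\ r_{k+1} \leq s < r_k\}$. When $|I| \geq |J|$ the constraint is strengthened and nothing needs checking. When $|J| > |I|$, I would pick any $k_0 \in J$ and apply property (iii): since $s \geq r_{k_0+1}$, every row $i > s$ satisfies $i > r_{k_0+1}$, and so (iii) forces $\alpha_{r_{k_0}c}$ to be the only potentially non-integer entry among $\alpha_{s+1,c},\ldots,\alpha_{nc}$; by (ii) it is in fact non-integer. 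Together with the integrality of $\sum_{i=1}^n \alpha_{ic}$ from Proposition~\ref{prop:column_content}, this forces $\sum_{i=1}^{s} \alpha_{ic}$ to be a non-integer, and since it is at least the integer $\#\{(i,c) \in D : i \leq s\}$, the inequality must be strict, producing positive slack $\epsilon_{c,s}>0$.

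To conclude, I would take $\delta > 0$ smaller than each of the finitely many such slacks $\epsilon_{c,s}$ and smaller than the distance from each modified $\alpha_{ij}$ to $\{0,1\}$. With this choice, $(\alpha^{\delta}_{ij}) \in \mathcal{P}(D,\alpha)$, as required.
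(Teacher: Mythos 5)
Your argument is correct and follows essentially the same route as the paper's: you preserve (B) exactly via (i), preserve (A) via (ii) since only entries strictly between $0$ and $1$ are perturbed, and for (C) you use (ii)--(iii) together with Proposition~\ref{prop:column_content} to show $\sum_{i=1}^{s}\alpha_{ic}$ is non-integral whenever the flag sum could decrease, giving strict slack. The only cosmetic difference is that you track the exact net change $\delta(|I|-|J|)$ while the paper bounds it by dropping the favorable term; also, when calibrating $\delta$ it should be taken smaller than each slack divided by the relevant multiplicity (bounded by $m$), a triviality that doesn't affect correctness.
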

		
		\begin{proof}[Proof of Claim \ref{claim:is_stable}.]
			First, note that the content conditions (B) are preserved regardless of our choice of $\delta$. Indeed, for each $i \in [n]$,
			\begin{align*}
			\sum_{j=1}^{n} \alpha^{\delta}_{ij} &= \sum_{j=1}^{n} [\alpha_{ij} + \delta[\#\{k \in [m] : (i,j) = (r_k,c_k)\} - \#\{k \in [m] : (i,j) = (r_{k+1},c_k)\}]]
			\\&= \alpha_i + \delta[\#\{k \in [m] : i = r_k\} - \#\{k \in [m] : i = r_{k+1}\}],
			\end{align*}
			and the term in brackets vanishes by (i). 
			
			We next check the flag conditions (C). For each $s,j \in [n]$, we can write
			\begin{align}\label{eqn:delta_flag_bound}
			\nonumber\sum_{i=1}^s \alpha^{\delta}_{ij} &= \sum_{i=1}^{s} [\alpha_{ij} + \delta[\#\{k \in [m] : (i,j) = (r_k,c_k)\} - \#\{k \in [m] : (i,j) = (r_{k+1},c_k)\}]]
			\\\nonumber&= \sum_{i=1}^{s} \alpha_{ij} + \delta[\#\{k \in [m] : s \geq r_k \text{ and } j = c_k\} - \#\{k \in [m] : s \geq r_{k+1} \text{ and } j = c_k\}]
			\\&\geq \sum_{i=1}^{s} \alpha_{ij} - \delta[\#\{k \in [m] : r_{k+1} \leq s < r_k \text{ and } j = c_k\}].
			\end{align}
			Thus, if $\#\{k \in [m] : r_{k+1} \leq s < r_k \text{ and } j = c_k\} = 0$, then the flag condition (C) for these $s,j$ is preserved. 

Otherwise, $r_{k+1} \leq s < r_k$ and $j = c_k$ for some $k \in [m]$, so (ii) and (iii) tell us that there is exactly one $i > s$ for which $\alpha_{ij} \not\in \mathbb{Z}$, namely $i = r_k$. This, combined with Proposition \ref{prop:column_content}, shows that
			\begin{align}\label{eqn:delta_flag_strict}
			\sum_{i=1}^{s} \alpha_{ij} = \sum_{i=1}^{n} \alpha_{ij} - \sum_{i=s+1}^{n} \alpha_{ij} = \#\{(i,j) \in D : i \in [n]\} - \sum_{i=s+1}^{n} \alpha_{ij} \not\in \mathbb{Z}.
			\end{align}
			By the nonintegrality from (\ref{eqn:delta_flag_strict}),
the flag inequalities (C) for $(\alpha_{ij}) \in \mathcal{P}(D,\alpha)$ are strict:
			\begin{align}\label{eqn:oct18abc}
			\sum_{i=1}^{s} \alpha_{ij} > \#\{(i,j) \in D : i \leq s\}.
			\end{align}
			Hence, by taking $\delta$ sufficiently small and applying \eqref{eqn:delta_flag_bound} and \eqref{eqn:oct18abc}, we can ensure
			\begin{align*}
						\sum_{i=1}^s \alpha^{\delta}_{ij} \geq \sum_{i=1}^{s} \alpha_{ij} - \delta[\#\{k \in [m] : r_{k+1} \leq s < r_k \text{ and } j = c_k\}] \geq \#\{(i,j) \in D : i \leq s\}
			\end{align*}
			for all $s,j \in [n]$, so the flag conditions (C) will be preserved. If $\alpha_{ij} \neq \alpha^{\delta}_{ij}$ 
then by (\ref{eqn:alphadeltadef}) we must have $(i,j)=(r_k,c_k)$ or $(i,j)=(r_{k+1},c_k)$ for some $k$, which by (ii) implies 
$0 < \alpha_{ij} < 1$. So we can require in addition that $\delta$ be small enough that $0 \leq \alpha^{\delta}_{ij} \leq 1$ for all $i,j \in [n]$. For such $\delta$, the conditions (A)-(C) all hold, so $(\alpha^{\delta}_{ij}) \in \mathcal{P}(D,\alpha)$.
		\end{proof}
		
		Finally, choose a point $(\alpha_{ij}) \in \mathcal{P}(D,\alpha)$ with the maximum number of integer coordinates. If $(\alpha_{ij}) \in \mathbb{Z}^{n^2}$, then we are done. Otherwise, there exists $(r_1,\dots,r_{m+1};c_1,\dots,c_m)$ that is stable at $(\alpha_{ij})$ by Claim \ref{claim:stable_exists}. By (iv), there exists $(r,c) \in [n]^2$ such that $|\alpha^{\delta}_{rc}| \rightarrow \infty$ as $\delta \rightarrow \infty$, so $\alpha_{rc}^{\delta}$ violates the column-injectivity conditions (A) for large $\delta$. This, combined with Claim \ref{claim:is_stable}, shows that the set $S = \{\delta > 0 : (\alpha^{\delta}_{ij}) \in \mathcal{P}(D,\alpha)\}$ is nonempty and bounded above. Thus, we can define $\eta = \sup S$ and set $(\tilde{\alpha}_{ij}) = (\alpha^{\eta}_{ij})$. Since $\mathcal{P}(D,\alpha)$ is closed and the map $\delta \mapsto (\alpha_{ij}^{\delta})$ from $S$ to $\mathcal{P}(D,\alpha)$ is continuous, this supremum is in fact a maximum, and $(\tilde{\alpha}_{ij}) \in \mathcal{P}(D,\alpha)$. By our choice of $(\alpha_{ij})$, we cannot have $\tilde{\alpha}_{r_kc_k} \in \mathbb{Z}$ or $\tilde{\alpha}_{r_{k+1}c_k} \in \mathbb{Z}$ for any $k \in [m]$, since then $(\tilde{\alpha}_{ij})$ has more integer coordinates than $(\alpha_{ij})$. Thus, $(r_1,\dots,r_{m+1};c_1,\dots,c_m)$ is stable at $(\tilde{\alpha}_{ij})$, so by Claim \ref{claim:is_stable}, there exists $\delta > 0$ for which $(\tilde{\alpha}^{\delta}_{ij}) \in \mathcal{P}(D,\alpha)$. But then $(\alpha^{\eta+\delta}_{ij}) = (\tilde{\alpha}^{\delta}_{ij}) \in \mathcal{P}(D,\alpha)$, contradicting the maximality of $\eta$.
	\end{proof}
	
In summary, applying the results of this section to $D=D(w)$,
\begin{equation}\label{eq:mainEquiv}
c_{\alpha,w}>0\hspace*{-0.08cm}
\overset{\text{\cite{Fink}}}{{\iff}}\hspace*{-0.08cm}
\alpha \in \mathcal{S}_{D}\hspace*{-0.08cm}
\overset{\text{\stackanchor[3pt]{Thm}{\ref{thm:independent_characterization}}} }{{\iff}}\hspace*{-0.08cm}
{\sf PerfectTab}(D,\alpha)\neq\emptyset \hspace*{-0.08cm}
\overset{\text{\stackanchor[3pt]{Thm}{\ref{thm:int_pt_characterization}}}}{{\iff}}\hspace*{-0.08cm}
\mathcal{P}(D,\alpha) \cap \mathbb{Z}^{n^2} \neq \emptyset\hspace*{-0.08cm}
\overset{\text{\stackanchor[3pt]{Thm}{\ref{thm:relaxation_equivalence}}} }{{\iff}}\hspace*{-0.08cm}
\mathcal{P}(D,\alpha) \neq \emptyset.
\end{equation}
	
	If $D\subseteq [n]^2$ has many identical columns, then many of the flag conditions (C) will look essentially the same. Thus, for efficiency of computation, we construct a ``compressed" version of $\mathcal{P}(D,\alpha)$ that removes some of the repetitive inequalities.
				
		A tuple ${\mathcal C}=(m,\{P_k\}_{k=1}^\ell,\{p_k\}_{k=1}^\ell,\{\lambda_k\}_{k=1}^\ell)$ is a \emph{compression} of $D \subseteq [n]^2$ if:
	\begin{itemize}
		\item $m \leq n$ is a nonnegative integer such that $(r,p) \not\in D$ whenever $r > m$ and $p \in [n]$,
		
		\item $P=P_1\dot{\cup}\cdots\dot{\cup}P_{\ell} \subseteq [n]$ such that if  $p,p' \in P_k$ then
	\[\{r \in [n] : (r,p) \in D\} = \{r \in [n] : (r,p') \in D\},\]
and moreover if $D$ is nonempty in column $p$ then $p\in P_k$ for some $k\in [\ell]$.
	
		\item $p_k \in P_k$ a representative for each $k \in [\ell]$, and
		
		\item $\lambda_k = \#P_k$ for each $k \in {\ell}$.
	\end{itemize}

	For $D\subseteq [n]^2$, a compression ${\mathcal C}$ of $D$, and 
$\tilde{\alpha} = (\tilde{\alpha}_1,\dots,\tilde{\alpha}_m) \in \mathbb{Z}_{\geq 0}^m$ define
	\begin{align}\label{eqn:compressed_arguments}
		\mathcal{Q}(D,{\mathcal C},\tilde{\alpha}) \subseteq \mathbb{R}^{m\ell}
	\end{align}
	to be the polytope with points of the form $(\tilde{\alpha}_{ik})_{i\in[m],k\in[\ell]}$ satisfying (A')-(C') below.
	
	\begin{enumerate}
		\item[(A')] Column-Injectivity Conditions: For all $i \in [m], k \in [\ell]$,
		\begin{align*}
		0 \leq \tilde{\alpha}_{ik} \leq 1.
		\end{align*}
		
		\item[(B')] Content Conditions: For all $i \in [m]$,
		\begin{align*}
		\sum_{k=1}^{\ell} \lambda_k\tilde{\alpha}_{ik} = \alpha_i.
		\end{align*}
		
		\item[(C')] Flag Conditions: For all $s \in [m], k \in [\ell]$,
		\begin{align*}
		\sum_{i=1}^s \tilde{\alpha}_{ik} \geq \#\{(i,p_k) \in D : i \leq s\}.
		\end{align*}
	\end{enumerate}
	
	\begin{remark}\label{rmk:trivial_compression}
		We can always take $m = \ell = n$ and $P_k = \{k\}$ for each $k \in [\ell]$, in which case 
$\mathcal{Q}(D,{\mathcal C},\tilde{\alpha}) = \mathcal{P}(D,\alpha) \subseteq \mathbb{R}^{n^2}$.
	\end{remark}
	
	\begin{theorem}\label{thm:compression_equivalence} Let $\alpha = (\alpha_1,\dots,\alpha_n) \in \mathbb{Z}_{\geq 0}^n$ and $\tilde{\alpha} = (\tilde{\alpha}_1,\dots,\tilde{\alpha}_m) := (\alpha_1,\dots,\alpha_m)$. Then
		$\alpha_1 + \dots + \alpha_n = \#D$ and $\mathcal{P}(D,\alpha) \neq \emptyset$ if and only if $\alpha_1 + \dots + \alpha_m = \#D$, $\alpha_{m+1} = \dots = \alpha_n = 0$, and $\mathcal{Q}(D,{\mathcal C},\tilde{\alpha}) \neq \emptyset$.
	\end{theorem}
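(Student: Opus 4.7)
My plan is to prove both directions by explicitly converting between a point $(\alpha_{ij}) \in \mathcal{P}(D,\alpha)$ and a point $(\tilde{\alpha}_{ik}) \in \mathcal{Q}(D,\mathcal{C},\tilde{\alpha})$, using Proposition \ref{prop:column_content} repeatedly. The intuition is that conditions (A')--(C') are just the restriction of (A)--(C) to one representative column per equivalence class $P_k$, and the columns within each $P_k$ are interchangeable because they have the same $D$-content.

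For ($\Rightarrow$), suppose $(\alpha_{ij}) \in \mathcal{P}(D,\alpha)$. First I would show $\alpha_i = 0$ for $i > m$: Proposition \ref{prop:column_content} says $\sum_{i=1}^n \alpha_{ij} = \#\{(i,j) \in D\}$, while the flag condition with $s=m$ gives $\sum_{i=1}^m \alpha_{ij} \geq \#\{(i,j) \in D : i \leq m\}$, which equals $\#\{(i,j)\in D\}$ by the choice of $m$; combined with $\alpha_{ij}\geq 0$ this forces $\alpha_{ij} = 0$ for $i > m$, hence $\alpha_i = \sum_j \alpha_{ij} = 0$. Then define $\tilde{\alpha}_{ik} = \frac{1}{\lambda_k}\sum_{p \in P_k} \alpha_{ip}$ for $i \in [m], k \in [\ell]$. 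Conditions (A') and (C') follow by averaging the corresponding inequalities over $p \in P_k$ (using that all columns in $P_k$ have identical $D$-content), and (B') holds because $\sum_{k=1}^{\ell}\lambda_k \tilde{\alpha}_{ik} = \sum_{p\in P}\alpha_{ip} = \sum_{j=1}^n \alpha_{ij} = \alpha_i$, where the middle equality uses Proposition \ref{prop:column_content} to kill $\alpha_{ij}$ for $j \notin P$ (empty columns of $D$).

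For ($\Leftarrow$), suppose $\alpha_1 + \dots + \alpha_m = \#D$ with $\alpha_{m+1}=\dots=\alpha_n=0$, and $(\tilde{\alpha}_{ik}) \in \mathcal{Q}(D,\mathcal{C},\tilde{\alpha})$. The key preliminary observation is that, for each $k$, $\sum_{i=1}^m \tilde{\alpha}_{ik} = \#\{(i,p_k)\in D\}$. Indeed, (C') with $s = m$ yields ``$\geq$'' (since $D$ has no boxes below row $m$), while summing (B') over $i \in [m]$ with weights and comparing with $\#D = \sum_{k=1}^{\ell} \lambda_k \#\{(i,p_k)\in D\}$ (using that $D$ lives in columns $P$) shows the weighted sum of the two sides agree, so termwise equality holds because each $\lambda_k > 0$. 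I would then construct $(\alpha_{ij})$ by setting $\alpha_{ij} = \tilde{\alpha}_{ik}$ for $j \in P_k$, $i \in [m]$, and $\alpha_{ij} = 0$ otherwise. Verifying (A) is immediate from (A'); for (B), $\sum_j \alpha_{ij} = \sum_k \lambda_k \tilde{\alpha}_{ik} = \alpha_i$ when $i\leq m$ and $= 0 = \alpha_i$ otherwise; for (C), the case $j \notin P$ is trivial, and for $j \in P_k$ it splits into $s < m$ (direct from (C')) and $s \geq m$ (use the preliminary equality plus monotonicity).

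The main obstacle is the preliminary equality $\sum_{i=1}^m \tilde{\alpha}_{ik} = \#\{(i,p_k)\in D\}$ in the backward direction; it cannot come from (A')--(C') alone column-by-column, but only by globally matching the total content $\#D$ across all classes, which is exactly where the hypothesis $\alpha_1 + \dots + \alpha_m = \#D$ enters. Everything else is bookkeeping: the compression is designed so that averaging in one direction and copying in the other are inverse operations on the feasible sets.
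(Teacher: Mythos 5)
Your proposal is correct and follows essentially the same strategy as the paper: convert between $(\alpha_{ij})$ and $(\tilde\alpha_{ik})$ by averaging over each class $P_k$ in one direction and copying the representative value to every column of $P_k$ in the other. Two small remarks. First, a stylistic plus: you explicitly deal with columns $j\notin P$ (empty columns of $D$ not placed in any $P_k$), invoking Proposition~\ref{prop:column_content} to show $\alpha_{ij}=0$ there in the forward direction and setting $\alpha_{ij}:=0$ there in the backward direction; the paper's case split in the definition of $\alpha_{ij}$ silently omits this case, so your version is cleaner. Second, your ``preliminary equality'' $\sum_{i=1}^m\tilde\alpha_{ik}=\#\{(i,p_k)\in D\}$ is correct as derived (and your observation that it uses the global hypothesis $\alpha_1+\cdots+\alpha_m=\#D$ through summing (B$'$) is right), but it is not actually needed: for the flag condition (C) at $j\in P_k$ and $s\ge m$ you only need the inequality, and (C$'$) at $s'=m$ already gives $\sum_{i=1}^m\tilde\alpha_{ik}\ge\#\{(i,p_k)\in D:i\le m\}=\#\{(i,j)\in D: i\le s\}$, which suffices. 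The paper takes this shorter route; your version is correct but does extra work in the backward direction.
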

	
	\begin{proof}
		($\Rightarrow$) Let $(\alpha_{ij}) \in \mathcal{P}(D,\alpha)$. Then by the content and flag conditions (B) and (C),
		\begin{align*}
		\#D = \alpha_1 + \dots + \alpha_n &\geq \alpha_1 + \dots + \alpha_m = \sum_{i=1}^{m} \sum_{j=1}^{n} \alpha_{ij}
		\\&= \sum_{j=1}^{n} \sum_{i=1}^{m} \alpha_{ij} \geq \sum_{j=1}^{n} \#\{(i,j) \in D : i \leq m\} = \#D.
		\end{align*}
		Thus, $\alpha_1 + \dots + \alpha_m = \#D$ and $\alpha_{m+1} = \dots = \alpha_n = 0$. Now, for each $i \in [m]$ and $k \in [\ell]$, set
		\begin{align*}
		\tilde{\alpha}_{ik} = \frac{1}{\lambda_k} \sum_{j \in P_k} \alpha_{ij}.
		\end{align*}
		We claim that $(\tilde{\alpha}_{ik}) \in \mathcal{Q}(D,{\mathcal C},\alpha)$. First, for each $i \in [m]$ and $k \in [\ell]$, we have
		\begin{align*}
		0 \leq \tilde{\alpha}_{ik} = \frac{1}{\lambda_k} \sum_{j \in P_k} \alpha_{ij} \leq \frac{1}{\lambda_k} \sum_{j \in P_k} 1 = 1,
		\end{align*}
		so the column-injectivity conditions (A') are satisfied. Next, for each $i \in [m]$, (B) implies
		\begin{align*}
		\sum_{k=1}^{\ell} \lambda_k \tilde{\alpha}_{ik} = \sum_{k=1}^{\ell} \sum_{j \in P_k} \alpha_{ij} = \sum_{j=1}^{n} \alpha_{ij} = \alpha_i,
		\end{align*}
		so the content conditions (B') are satisfied. Finally, for each $s \in [m]$ and $k \in [\ell]$, (C) implies
		\begin{align*}
		\sum_{i=1}^{s} \tilde{\alpha}_{ik} = \frac{1}{\lambda_k} \sum_{j \in P_k} \sum_{i=1}^{s} \alpha_{ij} \geq \frac{1}{\lambda_k} \sum_{j \in P_k} \#\{(i,j) \in D : i \leq s\} = \#\{(i,p_k) \in D : i \leq s\},
		\end{align*}
		so the flag conditions (C') are satisfied.
		
		\noindent
		($\Leftarrow$) Clearly $\alpha_1 + \dots + \alpha_n = \#D$. Let $(\tilde{\alpha}_{ik}) \in 
\mathcal{Q}(D,{\mathcal C},\tilde{\alpha})$. For each $i,j \in [n]$, set
		\begin{align*}
		\alpha_{ij} =
		\begin{cases}
		0 &\text{if } i > m, \\
		\tilde{\alpha}_{ik} &\text{if } i \leq m \text{ and } j \in P_k.
		\end{cases}
		\end{align*}
		We claim that $(\alpha_{ij}) \in \mathcal{P}(D,\alpha)$. The column-injectivity conditions (A) are clear. If $i > m$,
		\begin{align*}
		\sum_{j=1}^{n} \alpha_{ij} = 0 = \alpha_i.
		\end{align*}
		Otherwise $i \leq m$, and (B') implies
		\begin{align*}
		\sum_{j=1}^{n} \alpha_{ij} = \sum_{k=1}^{\ell} \sum_{j \in P_k} \tilde{\alpha}_{ik} = \sum_{k=1}^{\ell} \lambda_k\tilde{\alpha}_{ik} = \alpha_i.
		\end{align*}
		Thus, the content conditions (B) hold. Finally, if $s \in [n]$ and $j \in P_k$, then (C') implies
		\begin{align*}
		\sum_{i=1}^{s} \alpha_{ij} = \sum_{i=1}^{\min\{s,m\}} \tilde{\alpha}_{ik} \geq \#\{(i,p_k) \in D: i \leq \min\{s,m\}\} = \#\{(i,j) \in D: i \leq s\}.
		\end{align*}
		Hence, the flag conditions (C) hold as well.
	\end{proof}
	
	\subsection{Deciding membership in the Schubitope}
	
	We use the above results of this section to give a polynomial time algorithm to check if a lattice point is in the Schubitope.
	
		Let $D \subseteq [n]^2$, and fix a compression ${\mathcal C}=(m,\{P_k\}_{k=1}^{\ell},\{p_k\}_{k=1}^{\ell},
\{\lambda_k\}_{k=1}^{\ell})$ of $D$ (as in Section \ref{subsctn:polytopes}).

	\begin{theorem}\label{thm:lp_characterization}
		Let $\alpha = (\alpha_1,\dots,\alpha_n) \in \mathbb{Z}_{\geq 0}^n$. Then $\alpha \in \mathcal{S}_D$ if and only if $\alpha_1 + \dots + \alpha_m = \#D$, $\alpha_{m+1} = \dots = \alpha_n = 0$, and $\mathcal{Q}(D,{\mathcal C},\tilde{\alpha}) \neq \emptyset$, where $\tilde{\alpha} = (\tilde{\alpha}_1,\dots,\tilde{\alpha}_m) := (\alpha_1,\dots,\alpha_m)$.
	\end{theorem}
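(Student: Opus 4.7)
The plan is to obtain Theorem~\ref{thm:lp_characterization} as a direct corollary by chaining together the equivalences already established in the section, with only one small verification needed at the endpoint involving $\alpha_{m+1} = \cdots = \alpha_n = 0$. Concretely, I would assemble the following chain of biconditionals:
\[
\alpha \in \mathcal{S}_D \overset{\text{Thm \ref{thm:independent_characterization}}}{\iff} \mathsf{PerfectTab}(D,\alpha) \neq \emptyset \overset{\text{Thm \ref{thm:int_pt_characterization}}}{\iff} \mathcal{P}(D,\alpha) \cap \mathbb{Z}^{n^2} \neq \emptyset \overset{\text{Thm \ref{thm:relaxation_equivalence}}}{\iff} \mathcal{P}(D,\alpha) \neq \emptyset,
\]
where the middle two equivalences carry along the hypothesis $\alpha_1 + \cdots + \alpha_n = \#D$. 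Theorem~\ref{thm:compression_equivalence} then converts the last statement, together with $\alpha_1 + \cdots + \alpha_n = \#D$, into the conjunction of $\alpha_1 + \cdots + \alpha_m = \#D$, $\alpha_{m+1} = \cdots = \alpha_n = 0$, and $\mathcal{Q}(D,\mathcal{C},\tilde{\alpha}) \neq \emptyset$. So the structural skeleton of the proof is immediate.

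For the $(\Rightarrow)$ direction, I would start from $\alpha \in \mathcal{S}_D$ and apply Theorem~\ref{thm:independent_characterization} to obtain $\tau \in \mathsf{PerfectTab}(D,\alpha)$. The only subtle point is deducing that $\alpha_{m+1} = \cdots = \alpha_n = 0$; this is the one step not already packaged in the cited theorems. I would argue as follows: by the compression definition, every box $(r,c) \in D$ satisfies $r \le m$. Since $\tau$ is flagged, $\tau(r,c) \le r \le m$ on all of $D$, so the content of $\tau$ is supported on $[m]$. Combined with Proposition~\ref{prop:proper_content}, which yields $\alpha_1 + \cdots + \alpha_n = \#D$, this gives $\alpha_1 + \cdots + \alpha_m = \#D$ and the vanishing of the tail. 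Now Theorems~\ref{thm:int_pt_characterization}, \ref{thm:relaxation_equivalence}, and \ref{thm:compression_equivalence} together deliver $\mathcal{Q}(D,\mathcal{C},\tilde{\alpha}) \neq \emptyset$.

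For the $(\Leftarrow)$ direction, I would simply run the chain in reverse. The hypotheses $\alpha_1 + \cdots + \alpha_m = \#D$ and $\alpha_{m+1} = \cdots = \alpha_n = 0$ ensure that $\alpha_1 + \cdots + \alpha_n = \#D$, and $\mathcal{Q}(D,\mathcal{C},\tilde{\alpha}) \neq \emptyset$ yields $\mathcal{P}(D,\alpha) \neq \emptyset$ via Theorem~\ref{thm:compression_equivalence}. Theorem~\ref{thm:relaxation_equivalence} then gives an integral point, Theorem~\ref{thm:int_pt_characterization} produces a perfect tableau, and Theorem~\ref{thm:independent_characterization} concludes $\alpha \in \mathcal{S}_D$.

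The only ``obstacle'' is the bookkeeping around the extra coordinates $\alpha_{m+1},\dots,\alpha_n$: Theorem~\ref{thm:compression_equivalence} is stated with both $\alpha_1 + \cdots + \alpha_m = \#D$ and the vanishing of the tail, but Theorem~\ref{thm:independent_characterization} does not directly say the tail must vanish. The flagging argument above fills this small gap and makes the chain of equivalences close properly. Apart from that, the proof is a routine concatenation of previously established results.
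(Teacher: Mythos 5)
Your proof is correct and follows exactly the paper's route, which is simply the citation ``This follows from Theorems~\ref{thm:independent_characterization}, \ref{thm:int_pt_characterization}, \ref{thm:relaxation_equivalence}, and \ref{thm:compression_equivalence}.'' The extra flagging argument you give for deducing $\alpha_{m+1}=\cdots=\alpha_n=0$ is valid but superfluous: the $(\Rightarrow)$ direction of Theorem~\ref{thm:compression_equivalence} already derives the tail vanishing from $\alpha_1+\cdots+\alpha_n=\#D$ and $\mathcal{P}(D,\alpha)\neq\emptyset$, so the chain closes without that detour.
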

	
	\begin{proof}
		This follows from Theorems \ref{thm:independent_characterization}, \ref{thm:int_pt_characterization}, \ref{thm:relaxation_equivalence}, and \ref{thm:compression_equivalence}.
	\end{proof}
	
	For each $k \in [\ell]$, let $R_k({\mathcal C})= \{r \in [n] : (r,p_k) \in D\} \subseteq [m]$.

	\begin{theorem}\label{thm:poly_time_schubitope}
		Given as input $\{R_k({\mathcal C})\}_{k=1}^{\ell}$, $\{\lambda_k\}_{k=1}^{\ell}$, and $\tilde{\alpha} = (\tilde{\alpha}_1,\dots,\tilde{\alpha}_m) \in \mathbb{Z}_{\geq 0}^m$ satisfying $\tilde{\alpha}_1 + \dots + \tilde{\alpha}_m = \#D$, one can decide if $\alpha := (\tilde{\alpha}_1,\dots,\tilde{\alpha}_m,0,\dots,0) \in \mathbb{Z}_{\geq 0}^n$ lies in $\mathcal{S}_D$ in polynomial time in $m$ and $\ell$.
	\end{theorem}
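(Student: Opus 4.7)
The plan is to invoke Theorem \ref{thm:lp_characterization}, which reduces the question ``$\alpha \in \mathcal{S}_D$?'' to checking (i) the arithmetic conditions $\tilde{\alpha}_1 + \cdots + \tilde{\alpha}_m = \#D$ and $\alpha_{m+1} = \cdots = \alpha_n = 0$, and (ii) the nonemptiness of the compressed polytope $\mathcal{Q}(D,\mathcal{C},\tilde{\alpha}) \subseteq \mathbb{R}^{m\ell}$. Part (i) is immediate: the sum condition is assumed as input, and the tail vanishing holds by the construction $\alpha := (\tilde{\alpha}_1,\dots,\tilde{\alpha}_m,0,\dots,0)$. Thus the real task is to decide, in time polynomial in $m$ and $\ell$, whether $\mathcal{Q}(D,\mathcal{C},\tilde{\alpha}) \neq \emptyset$.

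Next, I would explicitly set up the linear programming feasibility instance cut out by (A'), (B'), (C'). It has $m\ell$ real variables $\tilde{\alpha}_{ik}$ and $O(m\ell)$ linear constraints: $2m\ell$ box constraints from (A'), $m$ equalities from (B'), and $m\ell$ inequalities from (C'). Crucially, every coefficient and right-hand side is constructible from the given input in polynomial time: the $\lambda_k$ appear directly in (B') with right-hand sides $\alpha_i = \tilde{\alpha}_i$, and the right-hand sides of (C') are $\#\{(i,p_k) \in D : i \leq s\} = \#\{i \in R_k(\mathcal{C}) : i \leq s\}$, which are read off from $R_k(\mathcal{C})$ in $O(m)$ time per constraint. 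Every integer entry appearing in the LP has magnitude at most $\max\bigl(\max_k \lambda_k, \max_i \tilde{\alpha}_i, m\bigr)$, hence bit length polynomial in the input.

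To finish, apply L.~Khachiyan's ellipsoid method \cite{Schrijver} (or any other polynomial-time LP algorithm), which decides feasibility in time polynomial in the number of variables, the number of constraints, and the total bit length of the coefficients. Since all three quantities are polynomial in $m$, $\ell$, and the input bit length, this yields the claimed polynomial-time algorithm. There is no real combinatorial obstacle here; the theorem is essentially a formal consequence of Theorem \ref{thm:lp_characterization} combined with standard polynomial-time LP solvability, and the only thing to verify carefully is that passing from $\mathcal{P}(D,\alpha) \subseteq \mathbb{R}^{n^2}$ to the compressed $\mathcal{Q}(D,\mathcal{C},\tilde{\alpha}) \subseteq \mathbb{R}^{m\ell}$ has genuinely reduced the LP size from being polynomial in $n$ to being polynomial in $m$ and $\ell$, which is the whole point of having introduced the compression in Section \ref{subsctn:polytopes}.
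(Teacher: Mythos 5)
Your proposal is correct and matches the paper's proof: both reduce via Theorem~\ref{thm:lp_characterization} to deciding $\mathcal{Q}(D,\mathcal{C},\tilde{\alpha}) \neq \emptyset$, observe that the compressed LP has $O(m\ell)$ variables and constraints constructible in polynomial time from $\{R_k(\mathcal{C})\}$, $\{\lambda_k\}$, and $\tilde{\alpha}$, and apply Khachiyan's ellipsoid method. Your write-up simply spells out slightly more of the bookkeeping (constraint counts, bit lengths) than the paper does.
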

	
	\begin{remark}\label{rmk:natural_input}
		In view of Theorem \ref{thm:lp_characterization}, this input is most natural, because the conditions $\alpha_1 + \dots + \alpha_m = \#D$ and $\alpha_{m+1} = \dots = \alpha_n = 0$ are clearly necessary, and it contains the minimum amount of information we need to compute $\mathcal{Q}(D,{\mathcal C},\tilde{\alpha})$.
	\end{remark}
	
	\begin{remark}\label{rmk:poly_in_n}
		As in Remark \ref{rmk:trivial_compression}, we can take $m = \ell = n$ and $P_k = \{k\}$ for each $k \in [\ell]$, so we can check if $\alpha$ is in $\mathcal{S}_D$ in polynomial time in $n$ regardless of the structure of $D$.
	\end{remark}
	
	\begin{proof}[Proof of Theorem \ref{thm:poly_time_schubitope}]
		Since  $R_k({\mathcal C})$ takes $m$ bits to encode for each $k \in [\ell]$, and $\mathcal{Q}(D,{\mathcal C},\tilde{\alpha}) \subseteq \mathbb{R}^{m\ell}$ is governed by $O(m\ell)$ constraints, $\mathcal{Q}(D,{\mathcal C},\tilde{\alpha})$ can be constructed in polynomial time in $m$ and $\ell$. By Theorem \ref{thm:lp_characterization}, we are done using L.~Khachiyan's ellipsoid method \cite{Schrijver}.
	\end{proof}
	
\section{Computing Rothe diagrams}
We will repeatedly use the following to establish the complexity of computing preliminary data of $D(w)$ given ${\sf code}(w)$.

\begin{proposition}\label{prop:wPoly}
There exists an $O(L^{2})$-time algorithm to compute $(w(1),\ldots,w(L))$ from the input ${\sf code}(w)=(c_1,\ldots,c_L)$.
\end{proposition}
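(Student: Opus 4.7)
The plan is to compute $w(1), w(2), \ldots, w(L)$ iteratively, exploiting the fact that $w(i)$ is uniquely determined by $c_i$ together with the previously computed values $w(1), \ldots, w(i-1)$.

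The first step I would carry out is to unpack the definition of the Rothe diagram to obtain a usable description of $w(i)$. By definition, row $i$ of $D(w)$ consists of the boxes $(i,j)$ with $j < w(i)$ and $w^{-1}(j) > i$, and the latter condition is equivalent to $j \notin \{w(1), \ldots, w(i-1)\}$ (using $j \neq w(i)$ since $j < w(i)$). Hence $c_i$ counts exactly the positive integers less than $w(i)$ that do not already appear among $w(1), \ldots, w(i-1)$, so
\[
w(i) \ = \ \text{the } (c_i+1)\text{-th smallest positive integer not in } \{w(1), \ldots, w(i-1)\}.
\]

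With this characterization in hand, the algorithm is essentially rank-selection followed by insertion. I would maintain a sorted array $U_{i-1} = (u_1 < u_2 < \cdots < u_{i-1})$ of the previously produced values, with the conventions $u_0 = 0$ and $u_i = +\infty$. To compute $w(i)$, scan $k = 0, 1, 2, \ldots, i-1$ and stop at the first $k$ satisfying $u_{k+1} > c_i + 1 + k$; then set $w(i) := c_i + 1 + k$. Correctness: at termination, $u_1, \ldots, u_k \leq c_i + k < c_i + 1 + k < u_{k+1}$, so exactly $k$ elements of $U_{i-1}$ lie strictly below $c_i + 1 + k$ and $c_i + 1 + k$ itself is absent from $U_{i-1}$; hence $c_i + 1 + k$ is the $(c_i+1)$-th positive integer not in $U_{i-1}$, matching the characterization above. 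After this, insert $w(i)$ into $U_{i-1}$ in its sorted position via a single pass to produce $U_i$.

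Under the standing assumption that arithmetic operations are constant time, the scan and the insertion at step $i$ each cost $O(i)$ operations. Summing gives $\sum_{i=1}^{L} O(i) = O(L^2)$. I do not anticipate a substantive obstacle: the only mildly delicate point is correctly extracting the characterization of $w(i)$ from the conditions defining $D(w)$, after which both the algorithm and its complexity analysis are routine.
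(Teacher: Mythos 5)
Your proof is correct and follows essentially the same approach as the paper: both maintain a sorted array of the previously computed values $w(1),\dots,w(i-1)$ and determine $w(i)$ as the $(c_i+1)$-th smallest unused positive integer via a linear scan costing $O(i)$, giving $O(L^2)$ total. Your stopping index $k$ (first $k$ with $u_{k+1}>c_i+1+k$) coincides with the paper's $T=\max\{t:c_i\geq V_t\}$, since $V_t=u_t-t$ is weakly increasing; your presentation is just a cleaner phrasing of the same rank-selection step without the auxiliary gap array and partial sums.
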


\begin{proof}
Clearly $w(1)=c_1+1$. After determining $w(1),\ldots, w(i-1)$, we determine (in $O(L)$-time) $\pi:=\pi^{(i)}\in S_{i-1}$ such that
$(w(\pi(1))<w(\pi(2))<\ldots<w(\pi(i-1)))$. 
Next, set
\[B:=(w(\pi(1)), w(\pi(2))-w(\pi(1)), w(\pi(3))-w(\pi(2)), \ldots, w(\pi(i-1))-w(\pi(i-2)))).\]
Let 
\[V_t:=\sum_{j=1}^{t} (B_j-1), \text{ \ for $0\leq t\leq i-1$}.\] 
Set $w(i):=c_i+T+1$ where
 $T:=\max_{t\in[0,i-1]}\{t \ : \ c_i\geq V_t\}$. By construction, $w(1),\ldots,w(i)$ is a partial permutation with code $(c_1,\ldots,c_{i-1},c_i)$.
Each stage $1\leq i\leq L$ takes $O(i)$-time. 
\end{proof}

The \emph{essential set} of $w$ 
consists of the maximally southeast boxes of each
connected component of $D(w)$, i.e.,
\begin{equation}\label{eqn:August2abc}
{\sf Ess}(w)=\{(i,j)\in D(w): (i+1,j),(i,j+1)\not\in D(w)\}.
\end{equation}
If it exists, we call the connected component of $D(w)$ involving $(1,1)$ the \emph{dominant component} and denote it by ${\sf Dom}(w)$. For instance, in Example \ref{ex:Rothe}, ${\sf Dom}(w)$ has shape $(4,2,2,2)$.
Further, if it exists, the \emph{accessible box} $\mathbf{z}_w$ is the southmost then eastmost box in ${\sf Ess}(w)\smallsetminus{\sf Dom}(w)$. In Example \ref{ex:Rothe},
\[{\sf Ess}(w)=\{(1,4),(3,4),(3,7),(4,2)\} 
\text{\ and $\mathbf{z}_w=(3,7)$.}\] 
(Although $(4,2)$ is the southmost box of ${\sf Ess}(w)$,
it is in ${\sf Dom}(w)$, and hence not the accessible.) 

We will need the following in Section~\ref{sec:schubinsharpP}:

\begin{proposition}\label{prop:accBoxPoly}
Given ${\sf code}(w)$, there exists an $O(L^{2})$-time algorithm to compute $\mathbf{z}_w=(r,c)$ or determine it does not exist.
\end{proposition}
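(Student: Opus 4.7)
The plan is to first apply Proposition~\ref{prop:wPoly} to compute $(w(1),\dots,w(L))$ in $O(L^2)$ time, and then prepare a few $O(1)$-lookup tables in $O(L)$ extra time: set $w(L+1)$ to be the smallest positive integer missing from $w([L])$ (found by scanning $1,2,\dots$ against a hash set, or by sorting), and precompute $W^{-1}[v]:=w^{-1}(v)$ for $v\in w([L])$, the running minimum $\mu_i:=\min\{w(1),\dots,w(i)\}$, and $R(v):=W^{-1}[v-1]$ for $v\in w([L])$ (with $R(v):=+\infty$ if $v-1\notin w([L])$).

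The crux is the structural claim that when $(1,1)\in D(w)$, ${\sf Dom}(w)$ equals the Young diagram of the partition $\lambda$ with $\lambda_i=\mu_i-1$. The inclusion ``$\supseteq$'' is immediate, since such a Young diagram is a connected sub-diagram of $D(w)$ containing $(1,1)$. For ``$\subseteq$'', pick $k\in[i]$ with $w(k)=\mu_i$, so that in particular $w(k)=\min\{w(1),\dots,w(k)\}$. Direct checks then show row $k$ of $D(w)$ equals $\{(k,1),\dots,(k,w(k)-1)\}$ and column $w(k)$ of $D(w)$ lies entirely in rows $<k$. Setting $A:=\{(r,c)\in D(w):r\ge k,\ c\ge w(k)\}$, a case analysis on the four possible boundary adjacencies uses these two facts to show no cell of $A$ is adjacent to any cell of $D(w)\smallsetminus A$; hence $A$ is disconnected from $(1,1)\notin A$, and every $(i,j)\in D(w)$ with $j\ge\mu_i$ lies in $A$, hence outside ${\sf Dom}(w)$. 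This recognizes membership in ${\sf Dom}(w)$ in $O(1)$ from the precomputed $\mu_i$.

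Finally, translating $(i,j)\in D(w)$, $(i{+}1,j)\notin D(w)$, $(i,j{+}1)\notin D(w)$ into permutation language yields $j<w(i)$, $w^{-1}(j)>i$, $j\ge w(i{+}1)$, and $w^{-1}(j{+}1)\le i$; the last condition forces $j+1=w(k)$ for some $k\in[i]$. Combining this with the ${\sf Dom}(w)$ description above, the map $k\mapsto(i,w(k)-1)$ is a bijection between those $k\in[i]$ satisfying
\[
w(k)\le w(i),\quad w(k)>w(i+1),\quad R(w(k))>i,\quad w(k)>\mu_i,
\]
and the cells of ${\sf Ess}(w)\smallsetminus{\sf Dom}(w)$ in row $i$; the eastmost such cell corresponds to the $k$ maximizing $w(k)$. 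The algorithm then iterates $i=L,L-1,\dots,1$, scans $k\in[i]$ in $O(1)$ per $k$, and returns $\mathbf{z}_w=(i,w(k^*)-1)$ as soon as some row $i$ admits a valid $k^*$; if no $i$ does, it reports that $\mathbf{z}_w$ does not exist. The total work is $\sum_{i=1}^{L}O(i)=O(L^2)$. The main obstacle will be the Young diagram description of ${\sf Dom}(w)$, which rests on the ``column-wall'' disconnection argument above; once that is in hand, the rest reduces to a routine translation of diagram-level conditions into $O(1)$-time permutation-level checks.
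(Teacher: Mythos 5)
Your proof is correct and shares the paper's high-level strategy: recover $(w(1),\dots,w(L))$ from ${\sf code}(w)$ via Proposition~\ref{prop:wPoly}, then use the running minimum $\mu_i=\min_{j\le i}w(j)$ to recognize the dominant component and scan rows for the accessible box, all in $O(L^2)$ time. The execution differs in two substantive ways. First, you prove as a standalone lemma that ${\sf Dom}(w)$ is the Young diagram $\{(i,j):j\le\mu_i-1\}$ (via the disconnection argument along column $w(k)$ and row $k$ for $w(k)=\mu_i$); the paper never makes this explicit and instead records only the criterion ``$k_i>\min_{j<i}w(j)$'', where $k_i$ is the column of the eastmost box of row $i$. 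Second, you translate the full definition of ${\sf Ess}(w)$ into four permutation-level conditions (in particular checking $(i+1,j)\notin D(w)$ directly), whereas the paper never verifies essentiality of its candidate $(r,k_r)$ at all: it computes, per row, only the eastmost box and then takes the southmost row $r$ in which that box is non-dominant, relying on the fact that maximality of $r$ forces $(r+1,k_r)\notin D(w)$ (since otherwise row $r+1$ would also have a non-dominant box). So your route is a bit heavier --- it proves more than is needed to locate $\mathbf{z}_w$ --- but it is fully explicit about essentiality where the paper uses a structural shortcut, and both land at the same $O(L^2)$ bound.
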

\begin{proof}
Use Proposition \ref{prop:wPoly} to find $(w(1),\ldots,w(L))$ in $O(L^2)$-time. 
Next, compute \[w_{NW}(i):=\{w(j):w(j)\leq w(i), \ 
j\leq i\}.\]
Then take \[Y(i):=\{q-1 \ : \ q\in w_{NW}(i)\}\smallsetminus w_{NW}(i), \text{ \ for $i\in [L]$}.\]  

Compute $k_i:=\max Y(i)$ for $i\in [L]$ in $O(L^2)$-time
(if $k_i\geq 1$, then $k_i$ is the column index of the eastmost box of $D(w)$ in row $i$).  
In $O(L^2)$-time, calculate
 \[I:=\{i\in[2,\ldots,L] \ : \ k_i>\min_{j<i}w(j)\}.\]
Let $Y:=\{(i,k_i) \ : \  i\in I\}$. Hence, $Y\cap {\sf Dom}(w)=\emptyset$. Thus,
if $Y=\emptyset$, $\mathbf{z}_w$ does not exist. Otherwise, $\mathbf{z}_w\in Y$. Thus, in $O(L)$-time, determine
$r:=\max\{i:(i,k_i)\in Y\}$. Output $\mathbf{z}_w=(r,k_r)$. 
\end{proof}

The \emph{pivots} of $\mathbf{z}_w$ denoted ${\sf Piv}(\mathbf{z}_w)$ are the $\bullet$'s of $D(w)$ that are maximally southeast, among those northwest of $\mathbf{z}_w$.  In Example \ref{ex:Rothe},
${\sf Piv}((3,7))=\{(2,3),(1,5)\}$.

\section{Proofs of Theorems \ref{thm:SchubertinP} and \ref{thm:Advetableau}}
\subsection{Proof of Theorem \ref{thm:SchubertinP}}\label{sec:inPsec}
By (\ref{eq:schubSchub}) combined with Theorem \ref{thm:poly_time_schubitope}, it remains to establish the complexity of computing a compression of $D(w)$. For this, we need the following lemmas and propositions. Fix $w\in S_\infty$ with ${\sf code}(w)=(c_1,\ldots, c_L)$.
Let $\sigma\in S_L$ be such that 
$\{w(\sigma(1))<w(\sigma(2))<\ldots< w(\sigma(L))\}$. For convenience, set $w(\sigma(0)):=0$.

\begin{lemma}\label{prop:colTypeRothe}
For $1\leq h\leq L$, and for all 
\[j_1,j_2\in\{w(\sigma(h-1))+1,w(\sigma(h-1))+2,\ldots,w(\sigma(h))-1\},\]
we have $(i,j_1)\in D(w)$ if and only if $(i,j_2)\in D(w)$.
\end{lemma}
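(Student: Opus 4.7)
The plan is to unpack the definition $D(w) = \{(i,j) : j < w(i) \text{ and } i < w^{-1}(j)\}$ and check each of the two defining conditions separately for $j_1$ and $j_2$ in the specified gap. I expect no real obstacle: once the role of $\sigma$ is made explicit, the claim reduces to a direct inequality comparison on $w(i)$.

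First I would dispatch the easy range $i > L$. Because ${\sf code}(w)$ has length $L$, row $i$ of $D(w)$ is empty for $i > L$, so $(i,j_1),(i,j_2) \notin D(w)$ and the biconditional holds trivially. Hence I may assume $i \in [L]$.

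Second, I would exploit the crucial observation that $\{w(\sigma(1)),\ldots,w(\sigma(L))\}$ equals $\{w(1),\ldots,w(L)\}$ as a set, so the open interval $\{w(\sigma(h-1))+1,\ldots,w(\sigma(h))-1\}$ contains no value of $w$ at any position in $[L]$. Consequently, for each $j_k$ ($k=1,2$) in this gap, $w^{-1}(j_k) > L \geq i$, making the second Rothe condition $i < w^{-1}(j_k)$ automatic. Membership $(i,j_k) \in D(w)$ therefore reduces to the single inequality $j_k < w(i)$.

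Third, I would note that $w(i) = w(\sigma(t))$ for some $t \in [L]$, and split into two cases. If $t \leq h-1$, then $w(i) \leq w(\sigma(h-1)) < j_k$ for $k=1,2$, so $j_k \not< w(i)$ fails for both. If $t \geq h$, then $w(i) \geq w(\sigma(h)) > j_k$ for $k=1,2$, so $j_k < w(i)$ holds for both. In either case the answer is identical for $j_1$ and $j_2$, yielding $(i,j_1) \in D(w) \iff (i,j_2) \in D(w)$ and completing the proof.
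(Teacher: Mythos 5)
Your proof is correct, and it takes a genuinely different route from the paper's. The paper argues by induction on the row index $k$, comparing the columns in $u_{h-1}^{(k)}+1, \ldots, u_h^{(k)}-1$ as each new $\bullet$ and its eastward ray is added; the step is left as ``straightforward'' from the pictorial description of $D(w)$. You instead unwind the two defining inequalities $j < w(i)$ and $i < w^{-1}(j)$ directly: the gap $(w(\sigma(h-1)), w(\sigma(h)))$ contains no value of $w$ at a position in $[L]$, so the second condition is automatic for $i \le L$, and membership collapses to the single comparison $j_k < w(i)$, which is decided entirely by whether $\sigma^{-1}(i) \le h-1$ or $\ge h$ and so cannot distinguish $j_1$ from $j_2$. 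Your argument is more elementary and completely self-contained, since it never appeals to the pictorial ray description or to an inductive rebuild of the diagram; the paper's argument is more geometric and arguably more in line with how one visualizes Rothe diagrams, but it leaves the inductive step to the reader. Both are fine; yours has the virtue of making every step explicit.
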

\begin{proof}
For each $k$, let 
$u_1^{(k)}<\ldots<u_k^{(k)}$ 
be $w(1),w(2),\ldots,w(k)$ sorted in increasing order. Set $u_0^{(k)}:=0$. 
The lemma follows from the inductive claim 
that in the first $k$ rows of $D(w)$, the columns
$u_{h-1}^{(k)}+1,u_{h-1}^{(k)}+2,\ldots,u_{h}^{(k)}-1$
are the same. The base case $k=1$ is clear. The inductive step is straightforward by considering how, in row $k+1$ of $D(w)$, the $\bullet$ and 
its ray emanating east affects the columns.
\end{proof}

Define a collection of intervals in $[n]$ by
\[P'_{2k-1}:=[w(\sigma(k-1))+1,w(\sigma(k))-1]
\text{ \ and \ } P'_{2k}:=\{w(\sigma(k))\}, \mbox{ for } 1\leq k\leq L.\]

Let $1\leq h_1<h_2<\ldots<h_\ell\leq 2L$ be indices of the intervals $P_h'$ that are nonempty. Set $P_i:=P_{h_i}'$. 

\begin{lemma}\label{prop:colPartRothe}
If $j_1,j_2\in P_k$ for some $k$, then $(i,j_1)\in D(w) \iff (i,j_2)\in D(w)$. 
\end{lemma}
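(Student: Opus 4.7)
The plan is to dispatch this by the two possible shapes of $P_k$. Each $P_k$ equals some $P'_{h_k}$, and by construction $P'_{h_k}$ is either of the form $P'_{2t-1} = [w(\sigma(t-1))+1,\, w(\sigma(t))-1]$ (an open interval between two consecutive values of $w$ in sorted order) or of the form $P'_{2t} = \{w(\sigma(t))\}$ (a singleton). So I would begin by saying: fix $k$ and let $t$ be the index with $P_k = P'_{h_k}$.

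In the singleton case $P_k = P'_{2t} = \{w(\sigma(t))\}$, the claim $(i,j_1) \in D(w) \iff (i,j_2) \in D(w)$ is vacuous, since any $j_1, j_2 \in P_k$ must both equal $w(\sigma(t))$. In the interval case $P_k = P'_{2t-1} = [w(\sigma(t-1))+1,\, w(\sigma(t))-1]$, the claim is exactly the conclusion of Lemma~\ref{prop:colTypeRothe} applied with $h = t$: any two column indices $j_1, j_2$ drawn from this interval satisfy $(i,j_1) \in D(w) \iff (i,j_2) \in D(w)$ for every row $i$.

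There is essentially no obstacle here; the lemma is a bookkeeping consequence of how the $P_k$ are defined as the nonempty members of the $P'_h$ sequence. The only thing worth being careful about is confirming that these are the only two types of blocks in the partition, which follows directly from the defining formulas for $P'_{2k-1}$ and $P'_{2k}$, and that Lemma~\ref{prop:colTypeRothe} covers the interval blocks exactly. With this observation stated, the proof is a one-line appeal to Lemma~\ref{prop:colTypeRothe} together with the trivial singleton case.
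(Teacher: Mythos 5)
Your proposal is correct and follows the same route as the paper's own (one-line) proof: unwind the definition of $P_k$ as one of the nonempty $P'_h$, and then invoke Lemma~\ref{prop:colTypeRothe} for the interval blocks while observing the singleton blocks are trivial. You've simply made explicit what the paper leaves implicit.
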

\begin{proof}
This follows by the definition of $\{P_k\}_{k=1}^{\ell}$ together with Lemma \ref{prop:colTypeRothe}.
\end{proof}

Let $p_k:=\min\{p\in P_k\}$ for each $k\in[\ell]$. 

\begin{proposition}\label{prop:rothePartitionPoly}
There exists an $O(L^{2})$-time algorithm to compute $\{P_k\}_{k=1}^{\ell},\{p_k\}_{k=1}^\ell,$ and $\{\#P_k\}_{k=1}^\ell$ from the input ${\sf code}(w)=(c_1,\ldots,c_L)$.
\end{proposition}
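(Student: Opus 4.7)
The plan is to read off $\{P_k\}, \{p_k\}, \{\#P_k\}$ directly from the intervals $P'_h$ defined just before Lemma~\ref{prop:colPartRothe}, since that lemma already certifies these intervals satisfy the column-equivalence property required of a compression.

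First I would invoke Proposition~\ref{prop:wPoly} to obtain $(w(1),\ldots,w(L))$ from ${\sf code}(w)=(c_1,\ldots,c_L)$ in $O(L^2)$ time. Next I would sort this length-$L$ list to produce the permutation $\sigma\in S_L$ with $w(\sigma(1))<\cdots<w(\sigma(L))$; a simple insertion sort runs in $O(L^2)$, which is within budget. With $\sigma$ in hand, I would enumerate for $k=1,\ldots,L$ the intervals $P'_{2k-1}=[w(\sigma(k-1))+1,w(\sigma(k))-1]$ and $P'_{2k}=\{w(\sigma(k))\}$ in $O(L)$ total time, discard the empty ones to list $P_1,\ldots,P_\ell$ in order, and for each surviving interval record $p_k$ (its left endpoint, which is its minimum) and $\#P_k$ (its length). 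Since each $P_k$ is an interval, the output is encoded in $O(L)$ space, and the dominant cost is the $O(L^2)$ call to Proposition~\ref{prop:wPoly}.

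For correctness I would rely on Lemma~\ref{prop:colPartRothe} to get that columns within a common $P_k$ have identical patterns in $D(w)$. It only remains to justify that every column $j$ where $D(w)$ is nonempty actually lies in $\bigcup_k P_k=\bigcup_h P'_h=[1,w(\sigma(L))]$. For any $j>w(\sigma(L))=\max_{i\leq L}w(i)$ and any $i\leq L$, the defining conditions $j<w(i)$ and $i<w^{-1}(j)$ of $D(w)$ cannot both hold (the first already fails), and for $i>L$ row $i$ is empty in $D(w)$ since ${\sf code}(w)_i=0$. Thus no box of $D(w)$ sits in a column outside $[1,w(\sigma(L))]$, so the compression condition is met.

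There is no serious obstacle: the substantive combinatorial content was already packaged into Lemmas~\ref{prop:colTypeRothe} and~\ref{prop:colPartRothe} and Proposition~\ref{prop:wPoly}. The only thing to be careful about is bookkeeping — making sure that after removing empty intervals the indices $h_1<\cdots<h_\ell$ and the $p_k,\#P_k$ are reported consistently — and confirming that the sort and interval enumeration stay within $O(L^2)$, which they comfortably do.
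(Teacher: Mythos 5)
Your proposal matches the paper's argument: both compute $w$ via Proposition~\ref{prop:wPoly}, sort it to get $\sigma$, enumerate the at most $2L$ intervals $P'_h$, and discard the empty ones; the choice of $O(L^2)$ insertion sort over $O(L\log L)$ sorting is an immaterial implementation detail within budget. The extra paragraph you add verifying that every column meeting $D(w)$ lies in $\bigcup_h P'_h = [1, w(\sigma(L))]$ is a worthwhile check of the compression conditions that the paper leaves implicit (it is invoked later in the Conclusion of the proof of Theorem~\ref{thm:SchubertinP}), but it does not change the approach.
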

\begin{proof}
Proposition \ref{prop:wPoly} computes $(w(1),\ldots,w(L))$ in $O(L^{2})$-time. It takes $O(L\log(L))$-time to sort $(w(1),\ldots,w(L))$, i.e., to compute $\sigma\in S_L$. Computing
the endpoints, and thus cardinalities, of the $P'_k$ takes $O(L)$-time as there are at most $2L$ of them. Then we reindex $\{\#P'_k\}_{k=1}^{2L}$ to obtain $\{\#P_k\}_{k=1}^\ell$ in $O(L)$-time.
\end{proof}

For each $k \in [\ell]$, let \[R_k := \{r \in [L] : (r,p_k) \in D(w)\}.\]
	
\begin{proposition}\label{prop:rothePoly}
Computing $\{R_k\}_{k=1}^\ell$ from ${\sf code}(w)$ takes $O(L^{2})$-time.
\end{proposition}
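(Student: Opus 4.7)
The plan is to use the definition $D(w) = \{(i,j) : j < w(i),\ i < w^{-1}(j)\}$ directly, and to check membership box-by-box after precomputing $w$ and a partial inverse. Recall that since ${\sf code}(w) = (c_1,\dots,c_L)$ has $c_r = 0$ for $r > L$, every box of $D(w)$ lies in a row $r \in [L]$; thus $R_k \subseteq [L]$.

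First, invoke Proposition \ref{prop:wPoly} to compute $(w(1),\dots,w(L))$ in $O(L^2)$-time and Proposition \ref{prop:rothePartitionPoly} to compute the representatives $\{p_k\}_{k=1}^\ell$ in $O(L^2)$-time. In $O(L)$-time, build a lookup array $\mathrm{pos}$ by setting $\mathrm{pos}[w(i)] = i$ for each $i \in [L]$; on entries $j$ not of the form $w(i)$ for $i\in [L]$, treat $\mathrm{pos}[j]$ as $+\infty$ (or equivalently $L+1$, since for such $j$ the smallest row with a $\bullet$ in column $j$ must exceed $L$).

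Next, for each $k \in [\ell]$, compute $R_k$ as follows. Set $p = p_k$ and $T = \mathrm{pos}[p]$. Then
\[
R_k \;=\; \{\,r \in [L] : w(r) > p \text{ and } r < T\,\},
\]
which is precisely the set of rows $r$ such that $(r,p_k) \in D(w)$. Iterating over $r \in [L]$, each membership test is $O(1)$, so computing $R_k$ takes $O(L)$-time. Since $\ell \leq 2L$, the total cost over all $k$ is $O(L^2)$, which dominates the preprocessing and yields the claimed bound.

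The main (very mild) obstacle is correctly handling $p_k \notin \{w(1),\dots,w(L)\}$, which occurs exactly when $P_k$ is one of the nonempty open intervals $P'_{2k-1}$. In that case, column $p_k$ has no $\bullet$ in rows $[L]$, so the condition $r < w^{-1}(p_k)$ is vacuously satisfied for every $r \in [L]$; the convention $\mathrm{pos}[p_k] = L+1$ captures this uniformly. With this convention the algorithm above is correct, and the runtime analysis is immediate.
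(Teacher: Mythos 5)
Your proposal is correct and follows essentially the same approach as the paper: both recognize $r \in R_k$ via the defining condition of $D(w)$ (that $w(r) > p_k$ and there is no $\bullet$ above $(r,p_k)$ in column $p_k$), after computing $(w(1),\dots,w(L))$ and $\{p_k\}_{k=1}^\ell$ from Propositions~\ref{prop:wPoly} and~\ref{prop:rothePartitionPoly}, yielding $O(L)$ per $k$ and $O(L^2)$ total. One small caveat: a lookup array indexed by values $w(i)$ could be exponentially large in $L$ (since the $c_i$ may be huge), so you should instead locate each $w^{-1}(p_k)$ (or verify $p_k$ is not in the image) by an $O(L)$ scan of $(w(1),\ldots,w(L))$ per $k$ -- or simply read it off from the sorted data already computed when forming the $P'_h$ -- which preserves the $O(L^2)$ bound without the allocation issue.
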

\begin{proof}
By $D(w)$'s definition, $r\in R_k$ if and only if $w(r)> p_k$ and $p_k\not\in\{w(i)  :  i<r\}$.
Propositions~\ref{prop:rothePartitionPoly} and~\ref{prop:wPoly}
give $\{P_k\}_{k=1}^{\ell}$, $\{p_k\}_{k=1}^\ell$ and 
$\{w(1),\ldots,w(L)\}$ in $O(L^2)$-time.
\end{proof}

\noindent\emph{Conclusion of proof of Theorem \ref{thm:SchubertinP}:}
Proposition \ref{prop:rothePartitionPoly} computes $\{P_k\}_{k=1}^\ell, \{p_k\}_{k=1}^\ell,$ and $\{\#P_k\}_{k=1}^\ell$ in $O(L^2)$-time. 
Proposition \ref{prop:rothePoly} finds $\{R_k\}_{k=1}^\ell$ in $O(L^2)$-time. 
One checks, using Lemma~\ref{prop:colPartRothe}, that  ${\mathcal C}=(L,\{P_k\}_{k=1}^\ell,\{p_k\}_{k=1}^\ell,\{\#P_k\}_{k=1}^\ell)$ is a 
compression of $D(w)$. Hence we may apply Theorem \ref{thm:poly_time_schubitope} by taking $D:=D(w)$, $R_k({\mathcal C}):=R_k$, $\lambda_k:=\# P_k$ for $k\in[\ell]$ and $m:=L$. Thus the result follows by (\ref{eq:schubSchub}).
\qed

\subsection{Proof of Theorem \ref{thm:Advetableau}; an application}
Remark~\ref{rmk:injective_iff_increasing} combined with (\ref{eq:mainEquiv}) proves the theorem.
\qed

Let $n_{132}(w)$ be the number of $132$-patterns in $w\in S_n$, that is,
\[n_{132}(w)=\#\{(i,j,k):1\leq i<j<k\leq n, w(i)<w(k)<w(j)\}.\]

\begin{corollary}
\label{cor:atleast}
There are at least $n_{132}(w)+1$ distinct vectors $\alpha$ such that $c_{\alpha,w}>0$.
\end{corollary}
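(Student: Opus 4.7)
The plan is to appeal to Theorem~\ref{thm:Advetableau}, which reduces the corollary to producing $n_{132}(w)+1$ distinct content vectors of tableaux in $\tabFCI(D(w))$.

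I would start with the ``row-index'' tableau $\tau_0$ defined by $\tau_0(r,c):=r$ for each $(r,c)\in D(w)$. This is flagged and (trivially) column-injective, so it lies in $\tabFCI(D(w))$, and its content is ${\sf code}(w)$; that accounts for one content. Next, for each 132-pattern $(a,b,c)$ in $w$, the box $(b,w(c))$ belongs to $D(w)$ (since $b<c=w^{-1}(w(c))$ and $w(c)<w(b)$) and in $\tau_0$ it carries label $b$. Relabeling this box by $a$ preserves the flag (as $a<b$) and column-injectivity: the remaining labels in column $w(c)$ are $\{r<c:w(r)>w(c),\ r\ne b\}$, a set that excludes $a$ because $w(a)<w(c)$.

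More generally, for a pair $(a,b)$ let $k_{(a,b)}$ be the number of 132-patterns $(a,b,c_i)$ sharing this pair; the corresponding boxes $(b,w(c_i))$ lie in pairwise distinct columns. For each $j\in\{1,\dots,k_{(a,b)}\}$, choosing any $j$ of these boxes and simultaneously relabeling each from $b$ to $a$ in $\tau_0$ yields a tableau still in $\tabFCI(D(w))$: the box-by-box arguments above combine because modifications in distinct columns are independent. The resulting content is ${\sf code}(w)+j(e_a-e_b)$, giving $k_{(a,b)}$ additional contents per pair.

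The last step is verifying pairwise distinctness of the $1+\sum_{(a,b)}k_{(a,b)}=1+n_{132}(w)$ contents produced. The code is distinct from the rest since $e_a-e_b\ne 0$. For fixed $(a,b)$, different $j$'s give contents differing in the $a$- and $b$-coordinates. For different pairs $(a_1,b_1)\ne(a_2,b_2)$ with $j_1,j_2\ge 1$, a short case analysis on the intersection $\{a_1,b_1\}\cap\{a_2,b_2\}$ (using $a_i<b_i$ to rule out ``swap'' overlaps) shows that the $j_1(e_{a_1}-e_{b_1})$ and $j_2(e_{a_2}-e_{b_2})$ corrections cannot cancel, so the contents never coincide. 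The main obstacle I anticipate is this cross-pair distinctness check, though it reduces to coordinate-wise coefficient comparisons and is quite short.
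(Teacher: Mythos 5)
Your argument is correct and uses a genuinely different mechanism than the paper's proof, though both begin from the same row-label filling $\tau_0$ with content ${\sf code}(w)$. In the paper, the boxes $(j,w(k))$ attached to $132$-patterns are listed in reading order and one builds an iterated chain $F_0,\ldots,F_N\in\tabSort(D(w))$ by decrementing every entry along a carefully chosen vertical run $M_i$ by exactly $1$; distinctness of the resulting contents is read off because the total label sum strictly decreases along the chain. You instead partition the $132$-patterns by their first two indices $(a,b)$, relabel any $j$ of the corresponding boxes $(b,w(c))$ (which lie in pairwise distinct columns) directly from $b$ to $a$ in a single step, and record the contents ${\sf code}(w)+j(e_a-e_b)$ for $j=1,\ldots,k_{(a,b)}$, with distinctness deduced from the support and signs of the shift vectors $j(e_a-e_b)$. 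Your route asks only for column-injectivity rather than column-strictness (both suffice by Theorem~\ref{thm:Advetableau} and Remark~\ref{rmk:injective_iff_increasing}), makes the multiset of attainable exponent shifts completely explicit, and avoids the reading-order and connected-component bookkeeping needed to define the paper's sets $M_i$; it is, in particular, indifferent to whether two patterns $(a,b,c)$ and $(a',b,c)$ pick out the same box of $D(w)$. In exchange, the paper's iterative decrement scheme produces a nested chain of column-strict tableaux, a pleasant structural byproduct your argument does not supply.
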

\begin{proof}
Suppose $i<j<k$ index a $132$ pattern in $w$. There is a box ${\sf b}$ of $D(w)$ in row $j$ and column $w(k)$. There are $N:=n_{132}(w)$ many such boxes, ${\sf b}_1,\ldots, {\sf b}_{N}$ (all distinct),
listed in English language reading order. Let $M_i$ be boxes in the same column and connected component as ${\sf b}_i$
that are weakly north of ${\sf b}_i$ and strictly south of any ${\sf b}_j$, where $j<i$. Iteratively define fillings $F_0,F_1,F_2,\ldots,F_N$ of $D(w)$:
\begin{itemize}
\item[($F_0$)] Fill each box ${\sf c}$ of $D(w)$ with the row number of ${\sf c}$.
\item[($F_i$)] For $1\leq i\leq N$, $F_i$ is the same as $F_{i-1}$ except that 
$F_i({\sf c}):=F_{i-1}({\sf c})-1$ if ${\sf c}\in M_i$.
\end{itemize}

Clearly, $F_0\in \tabSort(D(w)):=\bigcup_{\alpha} \tabSort(D(w),\alpha)$.
 Inductively assume $F_{i-1}\in \tabSort(D(w))$. 
 Since labels only decrease, $F_i$ satisfies the row bound condition. 
 Next we check that each column is strictly increasing.
 Let ${\sf m}_i$ be the northmost box of $M_i$. If ${\sf m}_i$
is adjacent and directly below some ${\sf b}_j$ (for a $j<i$) then
\[F_{i}({\sf b}_j)=F_{0}({\sf b}_j)-1<F_0({\sf m}_i)-1=F_i({\sf m}_i),\] 
as needed. Otherwise suppose ${\sf m}_i$ is adjacent and south of  a non-diagram position. Let ${\sf d}_i$ (if it exists) be
the first diagram box directly north of ${\sf m}_i$. Then $F_0({\sf d}_i)< F_{0}({\sf m}_i)-1$.
Hence 
\[
F_i({\sf d}_i)\leq F_{0}({\sf d}_i)<F_{0}({\sf m}_i)-1=F_i({\sf m}_i),
\]
verifying column increasingness here as well. That $F_i$ is column increasing elsewhere is clear since
$F_{i-1}$ is column increasing (by induction) and only labels of $M_i$ are changed.

It remains to check that every label of $F_i$ is in ${\mathbb Z}_{>0}$. Since
each box of $D(w)$ is decremented at most once, the only concern is there is a box 
${\sf x}$ in the first row  that appears in some $M_i$, since then $F_0({\sf x})=1$
and $F_{i}({\sf x})=0$. However, in this case ${\sf b}_i$ must be in ${\sf Dom}(w)$, which implies that the ``$1$'' in the $132$-pattern associated to ${\sf b}_i$ could not exist, a contradiction. Thus $F_i\in \tabSort(D(w))$, completing the induction.

Finally, under Theorem~\ref{thm:Advetableau},
each $F_i$ corresponds to a distinct exponent vector since
the sum of the labels is strictly decreasing at each step $F_{i-1}\mapsto F_i$. 
\end{proof}

From Corollary~\ref{cor:atleast}, this result of A.~Weigandt \cite{Weigandt} is immediate:
\begin{corollary}[A.~Weigandt's $132$-bound]
\label{cor:Weigandt}
${\mathfrak S}_w(1,1,1,\ldots,1)\geq n_{132}(w)+1$.
\end{corollary}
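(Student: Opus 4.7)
The plan is to observe that the inequality follows immediately from Corollary~\ref{cor:atleast} together with the nonnegativity of the Schubert coefficients $c_{\alpha,w}$ recorded in the Introduction. Concretely, I would expand $\mathfrak{S}_w = \sum_{\alpha \in \mathbb{Z}_{\geq 0}^n} c_{\alpha,w}\, x^{\alpha}$ and specialize each $x_i$ to $1$, yielding
\[
\mathfrak{S}_w(1,1,\ldots,1) \;=\; \sum_{\alpha} c_{\alpha,w}.
\]
Since $c_{\alpha,w} \in \mathbb{Z}_{\geq 0}$, every $\alpha$ in the support contributes at least $1$ to this sum. Hence the specialization is bounded below by the size of the support $\{\alpha : c_{\alpha,w} > 0\}$.

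By Corollary~\ref{cor:atleast}, the support has cardinality at least $n_{132}(w)+1$, which gives the desired bound. There is essentially no obstacle: the entire content of Weigandt's inequality has already been pushed into Corollary~\ref{cor:atleast}, where the combinatorial work of producing distinct witness exponent vectors from $132$-patterns was done by explicit construction of fillings $F_0,\ldots,F_{n_{132}(w)}$ lying in different $\tabSort(D(w),\alpha)$. The only thing left to remark is the trivial passage from ``support has size at least $n_{132}(w)+1$'' to ``sum of positive integer coefficients is at least $n_{132}(w)+1$''.
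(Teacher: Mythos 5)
Your argument is exactly the paper's: the paper also deduces Corollary~\ref{cor:Weigandt} directly from Corollary~\ref{cor:atleast}, and you have simply spelled out the (elementary) step that $\mathfrak{S}_w(1,\ldots,1)=\sum_\alpha c_{\alpha,w}$ is bounded below by the number of nonzero terms since each $c_{\alpha,w}\in\mathbb{Z}_{\geq 0}$. Correct and complete.
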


As shown in  \cite{Weigandt}, Corollary~\ref{cor:Weigandt} in turn implies ${\mathfrak S}_w(1,1,\ldots,1)\geq 3$ if $n_{132}(w)\geq 2$,
a recent conjecture of R.~P.~Stanley \cite{Shanan}. 

\section{Counting $c_{\alpha,w}$ is in $\#{\sf P}$}
\label{sec:schubinsharpP}

\subsection{Vexillary permutations}\label{sec:vex}
A permutation $w\in S_n$ is \emph{vexillary} if there does not exist a \emph{$2143$ pattern}, i.e., indices
$i<j<k<l$ such that $w$ has the pattern $w(j)<w(i)<w(l)<w(k)$. 
For example, $w=\underline{5}\underline{3}\underline{8}412\underline{6}7$ is not vexillary; we underlined the positions of a $2143$ pattern. \emph{Fulton's criterion} states that $w$ is vexillary if and only if there do not exist $(a,b),(c,d)\in {\sf Ess}(w)$ such that $a<c$ and $b<d$. In Example \ref{ex:Rothe}, $w$ is not vexillary due to $(1,4)$ and $(3,7)$. 
Our main reference for this subsection is \cite[Chapter~2]{Manivel}.

We will also use this characterization of vexillary permutations:

\begin{theorem}\cite{LS:transition}\label{thm:vexCode}
Given ${\sf code}(w)=(c_1,\ldots,c_L)\in\mathbb{Z}_{\geq 0}^n$, $w$ vexillary if and only if
\begin{itemize}
\item[\emph{(i)}] if $i$ is such that $c_i>c_{i+1}$, then $c_i>c_j$ for any $j>i$, and
\item[\emph{(ii)}] if $i, h$ are such that $c_i\geq c_h$, then $\#\{j  \ : \ i<j<h,  \ c_j<c_h\}\leq c_i-c_h$.
\end{itemize}
\end{theorem}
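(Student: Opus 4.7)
My plan is to reduce both directions to \emph{Fulton's criterion}, already recalled in the excerpt: $w$ is vexillary iff no two essential boxes of $D(w)$ lie in strict NW/SE position. Conditions (i) and (ii) should translate into the geometry of ${\sf Ess}(w)$ via the code.

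As preparation I would record two basic facts. First, since $c_i = \#\{k > i : w(k) < w(i)\}$, a short case analysis shows $c_i > c_{i+1}$ iff $w(i) > w(i+1)$; thus strict descents of the code are exactly descents of $w$. Second, row $i$ of $D(w)$ consists of the $c_i$ boxes in columns $\{w(k) : k > i,\, w(k) < w(i)\}$, and the easternmost box of row $i$ is essential precisely when $i$ is a descent.

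Forward direction (vexillary $\Rightarrow$ (i) and (ii)). Suppose (i) fails, so some descent index $i$ admits $j > i$ with $c_j \geq c_i$. Choosing $j$ minimal forces $j$ itself to be a descent (otherwise $c_{j-1} \geq c_j$ contradicts minimality, using the descent characterization), and the easternmost essential boxes of rows $i$ and $j$ then give a Fulton-violating pair. If (ii) fails for some pair $i < h$, let $h'$ be the first descent at or after $h$; the pigeonhole bound $\#\{j : i < j < h,\, c_j < c_h\} > c_i - c_h$ can be repackaged as a statement that the easternmost essential boxes of rows $i$ and $h'$ are in SE/NW position, again contradicting Fulton.

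Backward direction ((i) and (ii) $\Rightarrow$ vexillary). Arguing contrapositively, suppose essential boxes $(a,b),(c,d) \in {\sf Ess}(w)$ satisfy $a < c$ and $b < d$. Both $a$ and $c$ are descents. The hypothesis $b < d$ forces row $c$ of $D(w)$ to extend east of column $b$, which controls how many intermediate rows can be ``short.'' If $c_c \geq c_a$, then condition (i) is immediately violated at $a$. Otherwise $c_a \geq c_c$, and a column-by-column accounting of which $j \in (a, c)$ have $c_j < c_c$ produces strictly more than $c_a - c_c$ such indices, contradicting (ii).

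The main obstacle is the bookkeeping needed to convert the column-geometry of ${\sf Ess}(w)$ into the precise counting in (ii). I expect the cleanest packaging is to introduce, for each row, its easternmost column, and to analyze how these columns evolve as $j$ ranges over an interval $(i,h)$; the key lemma should state that if the sequence of easternmost columns fails to be monotone in a quantitative way, then (ii) is violated. Once this lemma is stated correctly, both directions drop out of Fulton's criterion together with elementary descent counting.
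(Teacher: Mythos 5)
The paper does not prove Theorem~\ref{thm:vexCode}; it cites \cite{LS:transition}, so there is no internal argument to compare against. Your high-level strategy (translate the code into descent/essential-box data and invoke Fulton's criterion) is a sensible route, and your preliminary facts are correct: $c_i>c_{i+1}$ iff $w(i)>w(i+1)$, and the easternmost box of row $i$ of $D(w)$ is essential precisely when $i$ is a descent. But one step as written is wrong: you claim the \emph{minimal} $j>i$ with $c_j\geq c_i$ must be a descent, ``otherwise $c_{j-1}\geq c_j$ contradicts minimality.'' Failing to be a descent at $j$ means $c_j\leq c_{j+1}$, not $c_{j-1}\geq c_j$, and the minimal such $j$ genuinely can fail to be a descent; for example for ${\sf code}(w)=(1,0,1,1)$, i.e.\ $w=21453$, and $i=1$, the minimal choice is $j=3$, which is not a descent since $c_3=c_4$. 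The correct repair is to walk forward from $j$ through the run of non-descents (along which the code is nondecreasing) until a descent is reached, which must happen because $L$ itself is always a descent ($c_L>0=c_{L+1}$).

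Beyond that correction, the substantive work is still unwritten. Writing $m_i$ for the column of the easternmost box of row $i$, you assert that descents $i<j$ with $c_j\geq c_i$ yield a Fulton-violating pair $(i,m_i),(j,m_j)$; this requires $m_j>m_i$, which is true but not obvious and needs a bullet-counting argument comparing the rectangles $[i]\times[m_i]$ and $[j]\times[m_j]$. In the backward direction you must also handle the fact that a Fulton-violating pair $(a,b),(c,d)$ need not consist of easternmost boxes of their rows, since a single descent row can carry several essential boxes (cf.\ row~$3$ in Example~\ref{ex:Rothe}), so your dichotomy between $c_c\geq c_a$ and $c_c<c_a$ and the accompanying accounting for condition (ii) cannot be read off as directly as stated. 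As it stands this is a plausible outline rather than a proof.
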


The \emph{shape} of a vexillary permutation $v$ is the partition $\lambda(v)$ formed by sorting 
${\sf code}(v)=(c_1,c_2,\ldots)$ into decreasing order. Now, 
if $c_i\neq 0$, let $e_i$ be the greatest integer $j\geq i$ such that $c_j\geq c_i$.  The \emph{flag} 
\[\phi(v)=(\phi_1\leq \phi_2\leq\ldots\leq \phi_m)\] 
for $v$ is the sequence of $e_i$'s sorted into increasing order; see, e.g., \cite[Definition~2.2.9]{Manivel}. 

\begin{example}\label{ex:flag}  Consider ${\sf code}(v)=(5,1,3,1,2)$ for the vexillary $v=6253714$. Here
\[e=(1,5,3,5,5), \phi(v)=(1,3,5,5,5) \text{\ and $\lambda(v)=(5,3,2,1,1)$.}\]

\setlength{\unitlength}{.27mm}
\[
\begin{picture}(570,140)
\put(60,0){$D(v)$}
\put(0,15){\framebox(140,140)}
\thicklines
\put(110,145){\circle*{4}}
\put(110,145){\line(1,0){30}}
\put(110,145){\line(0,-1){130}}
\put(30,125){\circle*{4}}
\put(30,125){\line(1,0){110}}
\put(30,125){\line(0,-1){110}}
\put(90,105){\circle*{4}}
\put(90,105){\line(1,0){50}}
\put(90,105){\line(0,-1){90}}
\put(50,85){\circle*{4}}
\put(50,85){\line(1,0){90}}
\put(50,85){\line(0,-1){70}}
\put(130,65){\circle*{4}}
\put(130,65){\line(1,0){10}}
\put(130,65){\line(0,-1){50}}
\put(10,45){\circle*{4}}
\put(10,45){\line(1,0){130}}
\put(10,45){\line(0,-1){30}}
\put(70,25){\circle*{4}}
\put(70,25){\line(1,0){70}}
\put(70,25){\line(0,-1){10}}

\thinlines
\put(0,135){\framebox(100,20)}
\put(20,135){\line(0,1){20}}
\put(40,135){\line(0,1){20}}
\put(60,135){\line(0,1){20}}
\put(80,135){\line(0,1){20}}
\put(20,55){\line(0,1){100}}

\put(0,55){\line(1,0){20}}
\put(0,75){\line(1,0){20}}
\put(0,95){\line(1,0){20}}
\put(0,115){\line(1,0){20}}

\put(40,95){\line(0,1){20}}
\put(40,95){\line(1,0){40}}
\put(60,95){\line(0,1){20}}
\put(80,95){\line(0,1){20}}
\put(40,115){\line(1,0){40}}

\put(60,55){\line(1,0){20}}
\put(60,75){\line(1,0){20}}
\put(60,55){\line(0,1){20}}
\put(80,55){\line(0,1){20}}

\put(230,0){$\lambda(v)$ flagged by $\phi(v)$}

\put(230,130){
$\tableau{{\ }&{\ }&{ \  }&{ \ }&{\ }\\{\ }&{\ }&{\ }\\{ \ }&{\ }\\{ \ }\\{ \ }}$}
\put(360,140){$\leq 1$}
\put(310,115){$\leq 3$}
\put(285,90){$\leq 5$}
\put(260,65){$\leq 5$}
\put(260,40){$\leq 5$}

\put(450,130){
$\tableau{{1 }&{1}&{1}&{1}&{1}\\{2}&{2}&{3}\\{3}&{4}\\{4 }\\{5}}$}
\put(440,0){$T\in{\sf SSYT}(\lambda(v), \phi(v))$}
\end{picture}
\]
\end{example}

For a partition $\lambda=(\lambda_1\geq \lambda_2\geq\ldots\geq \lambda_m \geq 0)$ and a flag $\phi=(\phi_1\leq \phi_2\leq\ldots\leq \phi_m)$ of positive integers, 
define the \emph{flagged Schur function}  
\[S_{\lambda}(\phi)=\det|h_{\lambda_i-i+j}(\phi_i)|_{i,j=1,\ldots,m},\] where \[h_k(n)=\sum_{1\leq i_1\leq\ldots\leq i_k\leq n}x_{i_1}\cdots x_{i_k}\] is the complete homogeneous symmetric polynomial of degree $k$. 
Furthermore,
\begin{equation}
\label{eqn:Sept28abc}  
{\mathfrak S}_v=S_{\lambda(v)}(\phi(v)), \text{\ for $v$ vexillary}.
\end{equation}

A semistandard Young tableau of shape $\lambda$ is \emph{flagged} by $\phi$ if its 
entries in row $i$ are $\leq \phi_i$; see Example \ref{ex:flag}. Denote the set of such tableaux by ${\sf SSYT}(\lambda,\phi)$.
Then
\begin{equation}
\label{eqn:flaggen}
S_{\lambda}(\phi)=\sum_{T\in{\sf SSYT}(\lambda,\phi)}x^{{\sf content}(T)}.
\end{equation}
where ${\sf content}(T)=(\mu_1,\ldots,\mu_{\ell(\lambda)})$ such that $\mu_i$ is the number of $i$'s in $T$.

\subsection{Graphical transition}
 The transition recurrence for ${\mathfrak S}_w$ was found by A.~Lascoux and M.-P. Sch\"{u}tzenberger \cite{LS:transition}. This is transition for the case discussed in \cite{Knutson.Yong}:

\begin{theorem}[\cite{LS:transition}, cf.~\cite{Knutson.Yong}]\label{thm:transition}
Let $\mathbf{z}_w=(r,c)$ and $w'=w\cdot (r\ k)$ where $k=w^{-1}(c)$.
Then 
\begin{equation}
\label{eqn:thetrans}
{\mathfrak S}_w=x_r {\mathfrak S}_{w'}+\sum_{w''=w'\cdot (i\ k)} {\mathfrak S}_{w''},
\end{equation}
where the summation is over $\{i:(i,w(i))\in {\sf Piv}(\mathbf{z}_w)\}$.
\end{theorem}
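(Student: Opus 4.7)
My plan is to derive this from Monk's rule, which is the most fundamental multiplication formula for Schubert polynomials. Writing $x_r = \mathfrak{S}_{s_r} - \mathfrak{S}_{s_{r-1}}$ and applying Monk to both factors yields a ``Chevalley-type'' expansion
\[
x_r\,\mathfrak{S}_{w'} \;=\; \sum_{\substack{b > r \\ \ell(w'\cdot (r\ b)) = \ell(w')+1}} \mathfrak{S}_{w'\cdot (r\ b)} \;-\; \sum_{\substack{a < r \\ \ell(w'\cdot (a\ r)) = \ell(w')+1}} \mathfrak{S}_{w'\cdot (a\ r)}.
\]
Because $(r,c) \in D(w)$ forces $r < k$ and $\ell(w) = \ell(w')+1$, the permutation $w = w'\cdot(r\ k)$ appears as the $b=k$ term of the first sum. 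Isolating this term, the claim reduces to showing that the remaining $b \neq k$ terms (with minus sign when moved to the other side) combine with the $a<r$ terms (with plus sign) to give exactly $\sum_{w''} \mathfrak{S}_{w''}$ indexed by the pivots.

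Next, I would construct a length-preserving involution that pairs off the non-pivot terms of the two ``correction'' sums for cancellation, with fixed points corresponding to the pivots. A natural candidate sends a transposition $(r\ b)$ with $b\ne k$ to an $(a\ r)$ by ``swapping through the dot at $(k,c)$'' in $G(w')$: given $b$, locate the dot in $w'$ forcing the length bookkeeping, and read off $a$ from the row index on the opposite side of $(r,c)$. The defining property of $\mathbf{z}_w$ being the southmost-then-eastmost essential box outside ${\sf Dom}(w)$ is exactly what prevents interference from rows and columns below or to the right, so the swap is well-defined. Pivots $(i,w(i)) \in {\sf Piv}(\mathbf{z}_w)$ are precisely those dots strictly northwest of $\mathbf{z}_w$ that are maximally southeast, which is the obstruction causing the corresponding $(i\ r)$ terms to have no partner in the first sum; these become the surviving $\mathfrak{S}_{w''}$ summands.

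The main obstacle is verifying this involution rigorously. The length conditions, the check that paired $(r\ b)$ and $(a\ r)$ really give the same Schubert polynomial after cancellation, and the identification of fixed points with pivots, are all delicate case analyses on $D(w)$ and $G(w)$. This is precisely the work done by the graphical reformulation in \cite{Knutson.Yong}: their diagrammatic language packages the Lascoux--Sch\"utzenberger permutation bookkeeping into manipulations of dots and boxes in the grid, and I would follow their argument rather than reconstruct the involution by hand, since the bookkeeping, while conceptually clear, is intricate in its details.
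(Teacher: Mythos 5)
The paper does not prove Theorem~\ref{thm:transition}: it is stated with attribution to \cite{LS:transition} (cf.~\cite{Knutson.Yong}) and used as a black box, so there is no in-paper proof to compare against. What can be assessed is whether your Monk-rule sketch could be completed in principle, and there is a concrete obstruction.

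Your central claim is that after writing
\[
x_r\,\mathfrak{S}_{w'} = \sum_{\substack{b>r\\ \ell(w't_{rb})=\ell(w')+1}} \mathfrak{S}_{w't_{rb}} - \sum_{\substack{a<r\\ \ell(w't_{ar})=\ell(w')+1}} \mathfrak{S}_{w't_{ar}},
\]
the surviving negative terms $w't_{ar}$ after cancellation are exactly the $w''$'s of the theorem (the pivot permutations). But the pivot permutations are $w'' = w'\cdot(i\ k)$, not $w'\cdot(a\ r)$. These are genuinely different: since $w'' = w'\cdot t_{ik}$ only permutes positions $i$ and $k$, we have $w''(r) = w'(r) = c$, whereas $w'\cdot t_{ar}(r) = w'(a) \neq c$ for $a \neq r$. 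So no pivot term appears in the $a<r$ correction sum at all, and the proposed involution pairing $(r,b)$ with $(a,r)$ through $(k,c)$ cannot possibly leave the pivots as its fixed points. In other words, transition is not a one-step cancellation inside a single application of Monk's rule; proving it requires substantially more (e.g., an induction along the transition tree, or a direct combinatorial argument as in \cite{LS:transition}). You also assert that $\ell(w)=\ell(w')+1$ ``follows from $(r,c)\in D(w)$'' --- that alone only gives $\ell(w')<\ell(w)$; you need that $(r,c)$ is an essential box (indeed the accessible box), so the sketch also hides a needed hypothesis there. Finally, the closing paragraph defers the entire cancellation argument to \cite{Knutson.Yong}, which is the whole content of the theorem; as written the proposal is a plan to cite rather than a proof.
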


We will use the \emph{graphical} transition tree $\TT(w)$ of \cite{Knutson.Yong}. This reformulates (\ref{eqn:thetrans}) in terms of
Rothe diagrams and certain moves on these diagrams.
By definition,
$D(w)$ (equivalently $w$) will label the root of $\TT(w)$. If $w$ is vexillary, stop. Otherwise, there exists an accessible box
$\mathbf{z}_w=(r,c)\in D(w)$
(if not, $D(w)={\sf Dom}(w)$, contradicting $w$ is not vexillary). 

The children of $D(w)$ are Rothe diagrams resulting from two types of moves:

\begin{itemize}
\item[(T.1)] \emph{Deletion moves}: remove $\mathbf{z}_w$ from $D(w)$. The resulting diagram is $D(w')$. Add an edge $D(w)\stackrel{x_r}{\longrightarrow} D(w')$.

\item[(T.2)] 
\emph{March moves}: There is a move for each
$\mathbf{x}^{(i)}=(i,w(i))\in {\sf Piv}(\mathbf{z}_w)$. 
Let ${\mathcal R}$ be the rectangle with corners $\mathbf{z}_w$ and $\mathbf{x}^{(i)}$. Remove $\mathbf{x}^{(i)}$ and its rays from 
$G(w)$ to form $G^{(i)}(w)$. Order the boxes $\{{\sf b}_i\}_{i=1}^r$ in ${\mathcal R}$ in English reading order. Move ${\sf b}_1$ strictly north and strictly west to the closest position not occupied by other boxes of $D(w)$ or rays from $G^{(i)}(w)$. 
Repeat with ${\sf b}_2,{\sf b}_3,\ldots$ where ${\sf b}_j$ may move into a square left unoccupied by earlier moves.
The resulting diagram will be $D(w'')$ where $w''=w'\cdot (i \ k)$. 
Add an edge $D(w)\stackrel{i}{\longrightarrow} D(w'')$.
\end{itemize}

Repeat for each child $D(u)$. Stop when $u$ vexillary; these permutations are the leaves
$\LL(w)$ of $\TT(w)$. (Multiple leaves may be labelled by the same permutation.)

\begin{example}\label{ex:marchMove} Let $w=53841267$. We compute the march move $2$ for the pivot $(2,3)$:
\begin{center}

\begin{tikzpicture}
\node (root) {\begin{tikzpicture}[scale=.5]

\draw (0,0) rectangle (8,8);

\draw (0,7) rectangle (1,8);
\draw (1,7) rectangle (2,8);
\draw (2,7) rectangle (3,8);
\draw (3,7) rectangle (4,8);

\draw (0,6) rectangle (1,7);
\draw (1,6) rectangle (2,7);

\draw (0,5) rectangle (1,6);
\draw (1,5) rectangle (2,6);

\draw (0,4) rectangle (1,5);
\draw (1,4) rectangle (2,5);

\draw (3,5) rectangle (4,6);

\draw (5,5) rectangle (6,6);

\draw (6,5) rectangle (7,6);

\node at (4,-.75) {$w=53841267$};
\node at (6.6,5.5) {$\mathbf{z}_w$};

\filldraw (4.5,7.5) circle (.5ex);
\draw[line width = .2ex] (4.5,0) -- (4.5,7.5) -- (8,7.5);
\filldraw (2.5,6.5) circle (.5ex);
\draw[line width = .2ex] (2.5,0) -- (2.5,6.5) -- (8,6.5);
\filldraw (7.5,5.5) circle (.5ex);
\draw[line width = .2ex] (7.5,0) -- (7.5,5.5) -- (8,5.5);
\filldraw (3.5,4.5) circle (.5ex);
\draw[line width = .2ex] (3.5,0) -- (3.5,4.5) -- (8,4.5);
\filldraw (0.5,3.5) circle (.5ex);
\draw[line width = .2ex] (0.5,0) -- (0.5,3.5) -- (8,3.5);
\filldraw (1.5,2.5) circle (.5ex);
\draw[line width = .2ex] (1.5,0) -- (1.5,2.5) -- (8,2.5);
\filldraw (5.5,1.5) circle (.5ex);
\draw[line width = .2ex] (5.5,0) -- (5.5,1.5) -- (8,1.5);
\filldraw (6.5,0.5) circle (.5ex);
\draw[line width = .2ex] (6.5,0) -- (6.5,0.5) -- (8,0.5);

\end{tikzpicture}};

\node[right=1 of root] (temp) {\begin{tikzpicture}[scale=.5]

\draw (0,0) rectangle (8,8);

\draw (0,7) rectangle (1,8);
\draw (1,7) rectangle (2,8);
\draw (2,7) rectangle (3,8);
\draw (3,7) rectangle (4,8);

\draw (0,6) rectangle (1,7);
\draw (1,6) rectangle (2,7);

\draw (0,5) rectangle (1,6);
\draw (1,5) rectangle (2,6);

\draw (0,4) rectangle (1,5);
\draw (1,4) rectangle (2,5);

\draw (3,5) rectangle (4,6);

\draw (5,5) rectangle (6,6);

\draw (6,5) rectangle (7,6);

\node at (5.8,5.5) {$\mathbf{z}_w$};
\node at (-.2,-.75) {remove hook at $(2,3)$};

\filldraw (4.5,7.5) circle (.5ex);
\draw[line width = .2ex] (4.5,0) -- (4.5,7.5) -- (8,7.5);
\filldraw (7.5,5.5) circle (.5ex);
\draw[line width = .2ex] (7.5,0) -- (7.5,5.5) -- (8,5.5);
\filldraw (3.5,4.5) circle (.5ex);
\draw[line width = .2ex] (3.5,0) -- (3.5,4.5) -- (8,4.5);
\filldraw (0.5,3.5) circle (.5ex);
\draw[line width = .2ex] (0.5,0) -- (0.5,3.5) -- (8,3.5);
\filldraw (1.5,2.5) circle (.5ex);
\draw[line width = .2ex] (1.5,0) -- (1.5,2.5) -- (8,2.5);
\filldraw (5.5,1.5) circle (.5ex);
\draw[line width = .2ex] (5.5,0) -- (5.5,1.5) -- (8,1.5);
\filldraw (6.5,0.5) circle (.5ex);
\draw[line width = .2ex] (6.5,0) -- (6.5,0.5) -- (8,0.5);

\end{tikzpicture}};

\node[right=1 of temp] (ans) {
\begin{tikzpicture}[scale=.5]

\draw (0,0) rectangle (8,8);

\draw (0,7) rectangle (1,8);
\draw (1,7) rectangle (2,8);
\draw (2,7) rectangle (3,8);
\draw (3,7) rectangle (4,8);

\draw (0,6) rectangle (1,7);
\draw (1,6) rectangle (2,7);

\draw (0,5) rectangle (1,6);
\draw (1,5) rectangle (2,6);

\draw (0,4) rectangle (1,5);
\draw (1,4) rectangle (2,5);

\draw[fill=lightgray] (2,6) rectangle (3,7);
\draw[fill=lightgray] (3,6) rectangle (4,7);
\draw[fill=lightgray] (5,6) rectangle (6,7);

\node at (1,-.75) {$w''= 57341268$};

\filldraw (4.5,7.5) circle (.5ex);
\draw[line width = .2ex] (4.5,0) -- (4.5,7.5) -- (8,7.5);
\filldraw (6.5,6.5) circle (.5ex);
\draw[line width = .2ex] (6.5,0) -- (6.5,6.5) -- (8,6.5);
\filldraw (2.5,5.5) circle (.5ex);
\draw[line width = .2ex] (2.5,0) -- (2.5,5.5) -- (8,5.5);
\filldraw (3.5,4.5) circle (.5ex);
\draw[line width = .2ex] (3.5,0) -- (3.5,4.5) -- (8,4.5);
\filldraw (0.5,3.5) circle (.5ex);
\draw[line width = .2ex] (0.5,0) -- (0.5,3.5) -- (8,3.5);
\filldraw (1.5,2.5) circle (.5ex);
\draw[line width = .2ex] (1.5,0) -- (1.5,2.5) -- (8,2.5);
\filldraw (5.5,1.5) circle (.5ex);
\draw[line width = .2ex] (5.5,0) -- (5.5,1.5) -- (8,1.5);
\filldraw (7.5,0.5) circle (.5ex);
\draw[line width = .2ex] (7.5,0) -- (7.5,0.5) -- (8,0.5);

\end{tikzpicture}};

\draw [->,
line join=round,
decorate, decoration={
    zigzag,
    segment length=9,
    amplitude=2,post=lineto,
    post length=2pt
}] (root) -- (temp);
\path (temp) edge[->] node[midway,above] {\ } (ans);

\end{tikzpicture}
\end{center}
The moved boxes during $D(w) \mapsto D(w'')$ are shaded gray. 
\end{example}

\begin{figure}[b]
\begin{tikzpicture}[sibling distance=2pt]
\node at (-2.6,-2.2) {$x_4$};

\node at (-6.5,-6.2) {$1$};
\node at (-4.8,-6.5) {$x_3$};
\node at (-2.5,-6.2) {$2$};

\node at (-6.6,-10.2) {$x_3$};
\node at (-4.7,-10.5) {$1$};
\node at (-2.4,-10.2) {$2$};

\node at (2.6,-2.2) {$2$};

\node at (6.5,-6.2) {$2$};
\node at (4.8,-6.5) {$x_3$};
\node at (2.5,-6.2) {$1$};

\node at (6.6,-10.2) {$2$};
\node at (4.7,-10.5) {$1$};
\node at (2.4,-10.2) {$x_3$};

\node (root) [scale=.35,sibling distance=2mm]{\begin{tikzpicture}
\draw (0,0) rectangle (8,8);

\draw (0,7) rectangle (1,8);
\draw (1,7) rectangle (2,8);
\draw (2,7) rectangle (3,8);
\draw (3,7) rectangle (4,8);

\draw (0,6) rectangle (1,7);
\draw (1,6) rectangle (2,7);

\draw (0,5) rectangle (1,6);
\draw (1,5) rectangle (2,6);

\draw (0,4) rectangle (1,5);
\draw (1,4) rectangle (2,5);
\draw (3,4) rectangle (4,5);

\draw (3,5) rectangle (4,6);

\draw (5,5) rectangle (6,6);

\draw (6,5) rectangle (7,6);

\filldraw (4.5,7.5) circle (.5ex);
\draw[line width = .2ex] (4.5,0) -- (4.5,7.5) -- (8,7.5);
\filldraw (2.5,6.5) circle (.5ex);
\draw[line width = .2ex] (2.5,0) -- (2.5,6.5) -- (8,6.5);
\filldraw (7.5,5.5) circle (.5ex);
\draw[line width = .2ex] (7.5,0) -- (7.5,5.5) -- (8,5.5);
\filldraw (5.5,4.5) circle (.5ex);
\draw[line width = .2ex] (5.5,0) -- (5.5,4.5) -- (8,4.5);
\filldraw (0.5,3.5) circle (.5ex);
\draw[line width = .2ex] (0.5,0) -- (0.5,3.5) -- (8,3.5);
\filldraw (1.5,2.5) circle (.5ex);
\draw[line width = .2ex] (1.5,0) -- (1.5,2.5) -- (8,2.5);
\filldraw (3.5,1.5) circle (.5ex);
\draw[line width = .2ex] (3.5,0) -- (3.5,1.5) -- (8,1.5);
\filldraw (6.5,0.5) circle (.5ex);
\draw[line width = .2ex] (6.5,0) -- (6.5,0.5) -- (8,0.5);

\node at (3.5,4.5) {\Huge $\mathbf{z}$};

\end{tikzpicture}};
\node[below left = 2 and 2 of root][scale=.25,sibling distance=5mm] (c1) {\begin{tikzpicture}

\draw (0,0) rectangle (8,8);

\draw (0,7) rectangle (1,8);
\draw (1,7) rectangle (2,8);
\draw (2,7) rectangle (3,8);
\draw (3,7) rectangle (4,8);

\draw (0,6) rectangle (1,7);
\draw (1,6) rectangle (2,7);

\draw (0,5) rectangle (1,6);
\draw (1,5) rectangle (2,6);

\draw (0,4) rectangle (1,5);
\draw (1,4) rectangle (2,5);

\draw (3,5) rectangle (4,6);

\draw (5,5) rectangle (6,6);

\draw (6,5) rectangle (7,6);

\node at (6.9,5.8) {\huge $\mathbf{z}$};

\filldraw (4.5,7.5) circle (.5ex);
\draw[line width = .2ex] (4.5,0) -- (4.5,7.5) -- (8,7.5);
\filldraw (2.5,6.5) circle (.5ex);
\draw[line width = .2ex] (2.5,0) -- (2.5,6.5) -- (8,6.5);
\filldraw (7.5,5.5) circle (.5ex);
\draw[line width = .2ex] (7.5,0) -- (7.5,5.5) -- (8,5.5);
\filldraw (3.5,4.5) circle (.5ex);
\draw[line width = .2ex] (3.5,0) -- (3.5,4.5) -- (8,4.5);
\filldraw (0.5,3.5) circle (.5ex);
\draw[line width = .2ex] (0.5,0) -- (0.5,3.5) -- (8,3.5);
\filldraw (1.5,2.5) circle (.5ex);
\draw[line width = .2ex] (1.5,0) -- (1.5,2.5) -- (8,2.5);
\filldraw (5.5,1.5) circle (.5ex);
\draw[line width = .2ex] (5.5,0) -- (5.5,1.5) -- (8,1.5);
\filldraw (6.5,0.5) circle (.5ex);
\draw[line width = .2ex] (6.5,0) -- (6.5,0.5) -- (8,0.5);

\end{tikzpicture}
};
\node[below left = 2  of c1] [scale=.22,sibling distance=2mm] (c2) {\begin{tikzpicture}

\draw (0,0) rectangle (8,8);

\draw (0,7) rectangle (1,8);
\draw (1,7) rectangle (2,8);
\draw (2,7) rectangle (3,8);
\draw (3,7) rectangle (4,8);
\draw[fill=lightgray]  (4,7) rectangle (5,8);
\draw[fill=lightgray]  (5,7) rectangle (6,8);

\draw (0,6) rectangle (1,7);
\draw (1,6) rectangle (2,7);

\draw (0,5) rectangle (1,6);
\draw (1,5) rectangle (2,6);

\draw (0,4) rectangle (1,5);
\draw (1,4) rectangle (2,5);

\draw (3,5) rectangle (4,6);

\node at (3.9,5.8) {\huge $\mathbf{z}$};

\filldraw (6.5,7.5) circle (.5ex);
\draw[line width = .2ex] (6.5,0) -- (6.5,7.5) -- (8,7.5);
\filldraw (2.5,6.5) circle (.5ex);
\draw[line width = .2ex] (2.5,0) -- (2.5,6.5) -- (8,6.5);
\filldraw (4.5,5.5) circle (.5ex);
\draw[line width = .2ex] (4.5,0) -- (4.5,5.5) -- (8,5.5);
\filldraw (3.5,4.5) circle (.5ex);
\draw[line width = .2ex] (3.5,0) -- (3.5,4.5) -- (8,4.5);
\filldraw (0.5,3.5) circle (.5ex);
\draw[line width = .2ex] (0.5,0) -- (0.5,3.5) -- (8,3.5);
\filldraw (1.5,2.5) circle (.5ex);
\draw[line width = .2ex] (1.5,0) -- (1.5,2.5) -- (8,2.5);
\filldraw (5.5,1.5) circle (.5ex);
\draw[line width = .2ex] (5.5,0) -- (5.5,1.5) -- (8,1.5);
\filldraw (7.5,0.5) circle (.5ex);
\draw[line width = .2ex] (7.5,0) -- (7.5,0.5) -- (8,0.5);

\end{tikzpicture}};
\node[below = 2  of c1][scale=.22,sibling distance=1mm](c3) {\begin{tikzpicture}

\draw (0,0) rectangle (8,8);

\draw (0,7) rectangle (1,8);
\draw (1,7) rectangle (2,8);
\draw (2,7) rectangle (3,8);
\draw (3,7) rectangle (4,8);

\draw (0,6) rectangle (1,7);
\draw (1,6) rectangle (2,7);

\draw (0,5) rectangle (1,6);
\draw (1,5) rectangle (2,6);

\draw (0,4) rectangle (1,5);
\draw (1,4) rectangle (2,5);

\draw (3,5) rectangle (4,6);

\draw (5,5) rectangle (6,6);

\node at (5.5,5.8) {\huge $\mathbf{z}$};

\filldraw (4.5,7.5) circle (.5ex);
\draw[line width = .2ex] (4.5,0) -- (4.5,7.5) -- (8,7.5);
\filldraw (2.5,6.5) circle (.5ex);
\draw[line width = .2ex] (2.5,0) -- (2.5,6.5) -- (8,6.5);
\filldraw (6.5,5.5) circle (.5ex);
\draw[line width = .2ex] (6.5,0) -- (6.5,5.5) -- (8,5.5);
\filldraw (3.5,4.5) circle (.5ex);
\draw[line width = .2ex] (3.5,0) -- (3.5,4.5) -- (8,4.5);
\filldraw (0.5,3.5) circle (.5ex);
\draw[line width = .2ex] (0.5,0) -- (0.5,3.5) -- (8,3.5);
\filldraw (1.5,2.5) circle (.5ex);
\draw[line width = .2ex] (1.5,0) -- (1.5,2.5) -- (8,2.5);
\filldraw (5.5,1.5) circle (.5ex);
\draw[line width = .2ex] (5.5,0) -- (5.5,1.5) -- (8,1.5);
\filldraw (7.5,0.5) circle (.5ex);
\draw[line width = .2ex] (7.5,0) -- (7.5,0.5) -- (8,0.5);
\end{tikzpicture}};
\node[below right= 2  of c1] [scale=.22,sibling distance=2mm] (c4) {\begin{tikzpicture}

\draw (0,0) rectangle (8,8);

\draw (0,7) rectangle (1,8);
\draw (1,7) rectangle (2,8);
\draw (2,7) rectangle (3,8);
\draw (3,7) rectangle (4,8);

\draw (0,6) rectangle (1,7);
\draw (1,6) rectangle (2,7);

\draw (0,5) rectangle (1,6);
\draw (1,5) rectangle (2,6);

\draw (0,4) rectangle (1,5);
\draw (1,4) rectangle (2,5);

\draw[fill=lightgray] (2,6) rectangle (3,7);

\draw[fill=lightgray] (3,6) rectangle (4,7);

\draw[fill=lightgray] (5,6) rectangle (6,7);

\node at (5.1,6.8) {\huge $\mathbf{z}$};

\filldraw (4.5,7.5) circle (.5ex);
\draw[line width = .2ex] (4.5,0) -- (4.5,7.5) -- (8,7.5);
\filldraw (6.5,6.5) circle (.5ex);
\draw[line width = .2ex] (6.5,0) -- (6.5,6.5) -- (8,6.5);
\filldraw (2.5,5.5) circle (.5ex);
\draw[line width = .2ex] (2.5,0) -- (2.5,5.5) -- (8,5.5);
\filldraw (3.5,4.5) circle (.5ex);
\draw[line width = .2ex] (3.5,0) -- (3.5,4.5) -- (8,4.5);
\filldraw (0.5,3.5) circle (.5ex);
\draw[line width = .2ex] (0.5,0) -- (0.5,3.5) -- (8,3.5);
\filldraw (1.5,2.5) circle (.5ex);
\draw[line width = .2ex] (1.5,0) -- (1.5,2.5) -- (8,2.5);
\filldraw (5.5,1.5) circle (.5ex);
\draw[line width = .2ex] (5.5,0) -- (5.5,1.5) -- (8,1.5);
\filldraw (7.5,0.5) circle (.5ex);
\draw[line width = .2ex] (7.5,0) -- (7.5,0.5) -- (8,0.5);
\end{tikzpicture}};
\node[below left = 2  of c3]  [scale=.22,sibling distance=2mm](c5) {\begin{tikzpicture}

\draw (0,0) rectangle (8,8);

\draw (0,7) rectangle (1,8);
\draw (1,7) rectangle (2,8);
\draw (2,7) rectangle (3,8);
\draw (3,7) rectangle (4,8);

\draw (0,6) rectangle (1,7);
\draw (1,6) rectangle (2,7);

\draw (0,5) rectangle (1,6);
\draw (1,5) rectangle (2,6);

\draw (0,4) rectangle (1,5);
\draw (1,4) rectangle (2,5);

\draw (3,5) rectangle (4,6);

\node at (3.9,5.8) {\huge $\mathbf{z}$};

\filldraw (4.5,7.5) circle (.5ex);
\draw[line width = .2ex] (4.5,0) -- (4.5,7.5) -- (8,7.5);
\filldraw (2.5,6.5) circle (.5ex);
\draw[line width = .2ex] (2.5,0) -- (2.5,6.5) -- (8,6.5);
\filldraw (5.5,5.5) circle (.5ex);
\draw[line width = .2ex] (5.5,0) -- (5.5,5.5) -- (8,5.5);
\filldraw (3.5,4.5) circle (.5ex);
\draw[line width = .2ex] (3.5,0) -- (3.5,4.5) -- (8,4.5);
\filldraw (0.5,3.5) circle (.5ex);
\draw[line width = .2ex] (0.5,0) -- (0.5,3.5) -- (8,3.5);
\filldraw (1.5,2.5) circle (.5ex);
\draw[line width = .2ex] (1.5,0) -- (1.5,2.5) -- (8,2.5);
\filldraw (6.5,1.5) circle (.5ex);
\draw[line width = .2ex] (6.5,0) -- (6.5,1.5) -- (8,1.5);
\filldraw (7.5,0.5) circle (.5ex);
\draw[line width = .2ex] (7.5,0) -- (7.5,0.5) -- (8,0.5);
\end{tikzpicture}};
\node[below = 2  of c3] [scale=.22,sibling distance=2mm] (c6) {\begin{tikzpicture}

\draw (0,0) rectangle (8,8);

\draw (0,7) rectangle (1,8);
\draw (1,7) rectangle (2,8);
\draw (2,7) rectangle (3,8);
\draw (3,7) rectangle (4,8);
\draw[fill=lightgray] (4,7) rectangle (5,8);

\draw (0,6) rectangle (1,7);
\draw (1,6) rectangle (2,7);

\draw (0,5) rectangle (1,6);
\draw (1,5) rectangle (2,6);

\draw (0,4) rectangle (1,5);
\draw (1,4) rectangle (2,5);

\draw (3,5) rectangle (4,6);

\node at (3.5,5.8) {\huge $\mathbf{z}$};

\filldraw (5.5,7.5) circle (.5ex);
\draw[line width = .2ex] (5.5,0) -- (5.5,7.5) -- (8,7.5);
\filldraw (2.5,6.5) circle (.5ex);
\draw[line width = .2ex] (2.5,0) -- (2.5,6.5) -- (8,6.5);
\filldraw (4.5,5.5) circle (.5ex);
\draw[line width = .2ex] (4.5,0) -- (4.5,5.5) -- (8,5.5);
\filldraw (3.5,4.5) circle (.5ex);
\draw[line width = .2ex] (3.5,0) -- (3.5,4.5) -- (8,4.5);
\filldraw (0.5,3.5) circle (.5ex);
\draw[line width = .2ex] (0.5,0) -- (0.5,3.5) -- (8,3.5);
\filldraw (1.5,2.5) circle (.5ex);
\draw[line width = .2ex] (1.5,0) -- (1.5,2.5) -- (8,2.5);
\filldraw (6.5,1.5) circle (.5ex);
\draw[line width = .2ex] (6.5,0) -- (6.5,1.5) -- (8,1.5);
\filldraw (7.5,0.5) circle (.5ex);
\draw[line width = .2ex] (7.5,0) -- (7.5,0.5) -- (8,0.5);
\end{tikzpicture}};
\node[below right= 2  of c3]  [scale=.22,sibling distance=2mm](c7) {\begin{tikzpicture}

\draw (0,0) rectangle (8,8);

\draw (0,7) rectangle (1,8);
\draw (1,7) rectangle (2,8);
\draw (2,7) rectangle (3,8);
\draw (3,7) rectangle (4,8);

\draw (0,6) rectangle (1,7);
\draw (1,6) rectangle (2,7);
\draw[fill=lightgray] (2,6) rectangle (3,7);
\draw[fill=lightgray] (3,6) rectangle (4,7);

\draw (0,5) rectangle (1,6);
\draw (1,5) rectangle (2,6);

\draw (0,4) rectangle (1,5);
\draw (1,4) rectangle (2,5);



\filldraw (4.5,7.5) circle (.5ex);
\draw[line width = .2ex] (4.5,0) -- (4.5,7.5) -- (8,7.5);
\filldraw (5.5,6.5) circle (.5ex);
\draw[line width = .2ex] (5.5,0) -- (5.5,6.5) -- (8,6.5);
\filldraw (2.5,5.5) circle (.5ex);
\draw[line width = .2ex] (2.5,0) -- (2.5,5.5) -- (8,5.5);
\filldraw (3.5,4.5) circle (.5ex);
\draw[line width = .2ex] (3.5,0) -- (3.5,4.5) -- (8,4.5);
\filldraw (0.5,3.5) circle (.5ex);
\draw[line width = .2ex] (0.5,0) -- (0.5,3.5) -- (8,3.5);
\filldraw (1.5,2.5) circle (.5ex);
\draw[line width = .2ex] (1.5,0) -- (1.5,2.5) -- (8,2.5);
\filldraw (6.5,1.5) circle (.5ex);
\draw[line width = .2ex] (6.5,0) -- (6.5,1.5) -- (8,1.5);
\filldraw (7.5,0.5) circle (.5ex);
\draw[line width = .2ex] (7.5,0) -- (7.5,0.5) -- (8,0.5);
\end{tikzpicture}};

\node[below right = 2 and 2 of root]  [scale=.25,sibling distance=2mm](d1) {\begin{tikzpicture}
\draw (0,0) rectangle (8,8);

\draw (0,7) rectangle (1,8);
\draw (1,7) rectangle (2,8);
\draw (2,7) rectangle (3,8);
\draw (3,7) rectangle (4,8);

\draw (0,6) rectangle (1,7);
\draw (1,6) rectangle (2,7);

\draw (0,5) rectangle (1,6);
\draw (1,5) rectangle (2,6);

\draw (0,4) rectangle (1,5);
\draw (1,4) rectangle (2,5);
\draw[fill=lightgray] (2,5) rectangle (3,6);

\draw[fill=lightgray] (2,6) rectangle (3,7);

\draw (5,5) rectangle (6,6);

\draw (6,5) rectangle (7,6);

\node at (6.1,5.8) {\huge $\mathbf{z}$};

\filldraw (4.5,7.5) circle (.5ex);
\draw[line width = .2ex] (4.5,0) -- (4.5,7.5) -- (8,7.5);
\filldraw (3.5,6.5) circle (.5ex);
\draw[line width = .2ex] (3.5,0) -- (3.5,6.5) -- (8,6.5);
\filldraw (7.5,5.5) circle (.5ex);
\draw[line width = .2ex] (7.5,0) -- (7.5,5.5) -- (8,5.5);
\filldraw (2.5,4.5) circle (.5ex);
\draw[line width = .2ex] (2.5,0) -- (2.5,4.5) -- (8,4.5);
\filldraw (0.5,3.5) circle (.5ex);
\draw[line width = .2ex] (0.5,0) -- (0.5,3.5) -- (8,3.5);
\filldraw (1.5,2.5) circle (.5ex);
\draw[line width = .2ex] (1.5,0) -- (1.5,2.5) -- (8,2.5);
\filldraw (5.5,1.5) circle (.5ex);
\draw[line width = .2ex] (5.5,0) -- (5.5,1.5) -- (8,1.5);
\filldraw (6.5,0.5) circle (.5ex);
\draw[line width = .2ex] (6.5,0) -- (6.5,0.5) -- (8,0.5);

\end{tikzpicture}};
\node[below left = 2  of d1] [scale=.22,sibling distance=2mm] (d2) {\begin{tikzpicture}
\draw (0,0) rectangle (8,8);

\draw (0,7) rectangle (1,8);
\draw (1,7) rectangle (2,8);
\draw (2,7) rectangle (3,8);
\draw (3,7) rectangle (4,8);
\draw[fill=lightgray] (4,7) rectangle (5,8);
\draw[fill=lightgray] (5,7) rectangle (6,8);

\draw (0,6) rectangle (1,7);
\draw (1,6) rectangle (2,7);

\draw (0,5) rectangle (1,6);
\draw (1,5) rectangle (2,6);

\draw (0,4) rectangle (1,5);
\draw (1,4) rectangle (2,5);

\draw (2,5) rectangle (3,6);
\draw (2,6) rectangle (3,7);

\filldraw (6.5,7.5) circle (.5ex);
\draw[line width = .2ex] (6.5,0) -- (6.5,7.5) -- (8,7.5);
\filldraw (3.5,6.5) circle (.5ex);
\draw[line width = .2ex] (3.5,0) -- (3.5,6.5) -- (8,6.5);
\filldraw (4.5,5.5) circle (.5ex);
\draw[line width = .2ex] (4.5,0) -- (4.5,5.5) -- (8,5.5);
\filldraw (2.5,4.5) circle (.5ex);
\draw[line width = .2ex] (2.5,0) -- (2.5,4.5) -- (8,4.5);
\filldraw (0.5,3.5) circle (.5ex);
\draw[line width = .2ex] (0.5,0) -- (0.5,3.5) -- (8,3.5);
\filldraw (1.5,2.5) circle (.5ex);
\draw[line width = .2ex] (1.5,0) -- (1.5,2.5) -- (8,2.5);
\filldraw (5.5,1.5) circle (.5ex);
\draw[line width = .2ex] (5.5,0) -- (5.5,1.5) -- (8,1.5);
\filldraw (7.5,0.5) circle (.5ex);
\draw[line width = .2ex] (7.5,0) -- (7.5,0.5) -- (8,0.5);
\end{tikzpicture}};
\node[below = 2  of d1]  [scale=.22,sibling distance=2mm](d3) {\begin{tikzpicture}
\draw (0,0) rectangle (8,8);

\draw (0,7) rectangle (1,8);
\draw (1,7) rectangle (2,8);
\draw (2,7) rectangle (3,8);
\draw (3,7) rectangle (4,8);

\draw (0,6) rectangle (1,7);
\draw (1,6) rectangle (2,7);

\draw (0,5) rectangle (1,6);
\draw (1,5) rectangle (2,6);

\draw (0,4) rectangle (1,5);
\draw (1,4) rectangle (2,5);
\draw (2,5) rectangle (3,6);

\draw (2,6) rectangle (3,7);

\draw (5,5) rectangle (6,6);
\node at (5.5,5.8) {\huge $\mathbf{z}$};

\filldraw (4.5,7.5) circle (.5ex);
\draw[line width = .2ex] (4.5,0) -- (4.5,7.5) -- (8,7.5);
\filldraw (3.5,6.5) circle (.5ex);
\draw[line width = .2ex] (3.5,0) -- (3.5,6.5) -- (8,6.5);
\filldraw (6.5,5.5) circle (.5ex);
\draw[line width = .2ex] (6.5,0) -- (6.5,5.5) -- (8,5.5);
\filldraw (2.5,4.5) circle (.5ex);
\draw[line width = .2ex] (2.5,0) -- (2.5,4.5) -- (8,4.5);
\filldraw (0.5,3.5) circle (.5ex);
\draw[line width = .2ex] (0.5,0) -- (0.5,3.5) -- (8,3.5);
\filldraw (1.5,2.5) circle (.5ex);
\draw[line width = .2ex] (1.5,0) -- (1.5,2.5) -- (8,2.5);
\filldraw (5.5,1.5) circle (.5ex);
\draw[line width = .2ex] (5.5,0) -- (5.5,1.5) -- (8,1.5);
\filldraw (7.5,0.5) circle (.5ex);
\draw[line width = .2ex] (7.5,0) -- (7.5,0.5) -- (8,0.5);
\end{tikzpicture}};
\node[below right= 2  of d1]  [scale=.22,sibling distance=2mm](d4) {\begin{tikzpicture}
\draw (0,0) rectangle (8,8);

\draw (0,7) rectangle (1,8);
\draw (1,7) rectangle (2,8);
\draw (2,7) rectangle (3,8);
\draw (3,7) rectangle (4,8);

\draw (0,6) rectangle (1,7);
\draw (1,6) rectangle (2,7);
\draw[fill=lightgray] (3,6) rectangle (4,7);
\draw[fill=lightgray] (5,6) rectangle (6,7);

\draw (0,5) rectangle (1,6);
\draw (1,5) rectangle (2,6);

\draw (0,4) rectangle (1,5);
\draw (1,4) rectangle (2,5);
\draw (2,5) rectangle (3,6);

\draw (2,6) rectangle (3,7);

\node at (5.1,6.8) {\huge $\mathbf{z}$};

\filldraw (4.5,7.5) circle (.5ex);
\draw[line width = .2ex] (4.5,0) -- (4.5,7.5) -- (8,7.5);
\filldraw (6.5,6.5) circle (.5ex);
\draw[line width = .2ex] (6.5,0) -- (6.5,6.5) -- (8,6.5);
\filldraw (3.5,5.5) circle (.5ex);
\draw[line width = .2ex] (3.5,0) -- (3.5,5.5) -- (8,5.5);
\filldraw (2.5,4.5) circle (.5ex);
\draw[line width = .2ex] (2.5,0) -- (2.5,4.5) -- (8,4.5);
\filldraw (0.5,3.5) circle (.5ex);
\draw[line width = .2ex] (0.5,0) -- (0.5,3.5) -- (8,3.5);
\filldraw (1.5,2.5) circle (.5ex);
\draw[line width = .2ex] (1.5,0) -- (1.5,2.5) -- (8,2.5);
\filldraw (5.5,1.5) circle (.5ex);
\draw[line width = .2ex] (5.5,0) -- (5.5,1.5) -- (8,1.5);
\filldraw (7.5,0.5) circle (.5ex);
\draw[line width = .2ex] (7.5,0) -- (7.5,0.5) -- (8,0.5);
\end{tikzpicture}};
\node[below left = 2  of d3] (d5) [scale=.22,sibling distance=2mm] {\begin{tikzpicture}
\draw (0,0) rectangle (8,8);

\draw (0,7) rectangle (1,8);
\draw (1,7) rectangle (2,8);
\draw (2,7) rectangle (3,8);
\draw (3,7) rectangle (4,8);

\draw (0,6) rectangle (1,7);
\draw (1,6) rectangle (2,7);

\draw (0,5) rectangle (1,6);
\draw (1,5) rectangle (2,6);

\draw (0,4) rectangle (1,5);
\draw (1,4) rectangle (2,5);
\draw (2,5) rectangle (3,6);

\draw (2,6) rectangle (3,7);


\filldraw (4.5,7.5) circle (.5ex);
\draw[line width = .2ex] (4.5,0) -- (4.5,7.5) -- (8,7.5);
\filldraw (3.5,6.5) circle (.5ex);
\draw[line width = .2ex] (3.5,0) -- (3.5,6.5) -- (8,6.5);
\filldraw (5.5,5.5) circle (.5ex);
\draw[line width = .2ex] (5.5,0) -- (5.5,5.5) -- (8,5.5);
\filldraw (2.5,4.5) circle (.5ex);
\draw[line width = .2ex] (2.5,0) -- (2.5,4.5) -- (8,4.5);
\filldraw (0.5,3.5) circle (.5ex);
\draw[line width = .2ex] (0.5,0) -- (0.5,3.5) -- (8,3.5);
\filldraw (1.5,2.5) circle (.5ex);
\draw[line width = .2ex] (1.5,0) -- (1.5,2.5) -- (8,2.5);
\filldraw (6.5,1.5) circle (.5ex);
\draw[line width = .2ex] (6.5,0) -- (6.5,1.5) -- (8,1.5);
\filldraw (7.5,0.5) circle (.5ex);
\draw[line width = .2ex] (7.5,0) -- (7.5,0.5) -- (8,0.5);
\end{tikzpicture}};
\node[below = 2  of d3] (d6)  [scale=.22,sibling distance=2mm]{\begin{tikzpicture}
\draw (0,0) rectangle (8,8);

\draw (0,7) rectangle (1,8);
\draw (1,7) rectangle (2,8);
\draw (2,7) rectangle (3,8);
\draw (3,7) rectangle (4,8);
\draw[fill=lightgray] (4,7) rectangle (5,8);

\draw (0,6) rectangle (1,7);
\draw (1,6) rectangle (2,7);

\draw (0,5) rectangle (1,6);
\draw (1,5) rectangle (2,6);

\draw (0,4) rectangle (1,5);
\draw (1,4) rectangle (2,5);
\draw (2,5) rectangle (3,6);

\draw (2,6) rectangle (3,7);

\filldraw (5.5,7.5) circle (.5ex);
\draw[line width = .2ex] (5.5,0) -- (5.5,7.5) -- (8,7.5);
\filldraw (3.5,6.5) circle (.5ex);
\draw[line width = .2ex] (3.5,0) -- (3.5,6.5) -- (8,6.5);
\filldraw (4.5,5.5) circle (.5ex);
\draw[line width = .2ex] (4.5,0) -- (4.5,5.5) -- (8,5.5);
\filldraw (2.5,4.5) circle (.5ex);
\draw[line width = .2ex] (2.5,0) -- (2.5,4.5) -- (8,4.5);
\filldraw (0.5,3.5) circle (.5ex);
\draw[line width = .2ex] (0.5,0) -- (0.5,3.5) -- (8,3.5);
\filldraw (1.5,2.5) circle (.5ex);
\draw[line width = .2ex] (1.5,0) -- (1.5,2.5) -- (8,2.5);
\filldraw (6.5,1.5) circle (.5ex);
\draw[line width = .2ex] (6.5,0) -- (6.5,1.5) -- (8,1.5);
\filldraw (7.5,0.5) circle (.5ex);
\draw[line width = .2ex] (7.5,0) -- (7.5,0.5) -- (8,0.5);
\end{tikzpicture}};
\node[below right= 2  of d3] [scale=.22,sibling distance=2mm] (d7) {\begin{tikzpicture}
\draw (0,0) rectangle (8,8);

\draw (0,7) rectangle (1,8);
\draw (1,7) rectangle (2,8);
\draw (2,7) rectangle (3,8);
\draw (3,7) rectangle (4,8);

\draw (0,6) rectangle (1,7);
\draw (1,6) rectangle (2,7);

\draw (0,5) rectangle (1,6);
\draw (1,5) rectangle (2,6);

\draw (0,4) rectangle (1,5);
\draw (1,4) rectangle (2,5);
\draw (2,5) rectangle (3,6);

\draw (2,6) rectangle (3,7);
\draw[fill=lightgray] (3,6) rectangle (4,7);

\filldraw (4.5,7.5) circle (.5ex);
\draw[line width = .2ex] (4.5,0) -- (4.5,7.5) -- (8,7.5);
\filldraw (5.5,6.5) circle (.5ex);
\draw[line width = .2ex] (5.5,0) -- (5.5,6.5) -- (8,6.5);
\filldraw (3.5,5.5) circle (.5ex);
\draw[line width = .2ex] (3.5,0) -- (3.5,5.5) -- (8,5.5);
\filldraw (2.5,4.5) circle (.5ex);
\draw[line width = .2ex] (2.5,0) -- (2.5,4.5) -- (8,4.5);
\filldraw (0.5,3.5) circle (.5ex);
\draw[line width = .2ex] (0.5,0) -- (0.5,3.5) -- (8,3.5);
\filldraw (1.5,2.5) circle (.5ex);
\draw[line width = .2ex] (1.5,0) -- (1.5,2.5) -- (8,2.5);
\filldraw (6.5,1.5) circle (.5ex);
\draw[line width = .2ex] (6.5,0) -- (6.5,1.5) -- (8,1.5);
\filldraw (7.5,0.5) circle (.5ex);
\draw[line width = .2ex] (7.5,0) -- (7.5,0.5) -- (8,0.5);
\end{tikzpicture}};
  \begin{scope}[nodes = {draw = none}]
    \path (root) edge[->] (c1)
     (root) edge[->] (d1)
    (c1) edge[->] (c2) 
    (c1) edge[->] (c3)
    (c1) edge[->] (c4)
    (c3) edge[->] (c5)
    (c3) edge[->] (c6)
    (c3) edge[->] (c7)
    (d1) edge[->] (d2)
    (d1) edge[->] (d3)
    (d1) edge[->] (d4)
    (d3) edge[->] (d5)
    (d3) edge[->] (d6)
    (d3) edge[->] (d7)
      ;
  \end{scope}

\end{tikzpicture}

\caption{$\mathcal{T}(w)$ for $w = 53861247$ where the accessible boxes are marked with $\mathbf{z}$ and those boxes of the parent which moved are shaded gray.}
\label{fig:transTree}
\end{figure}
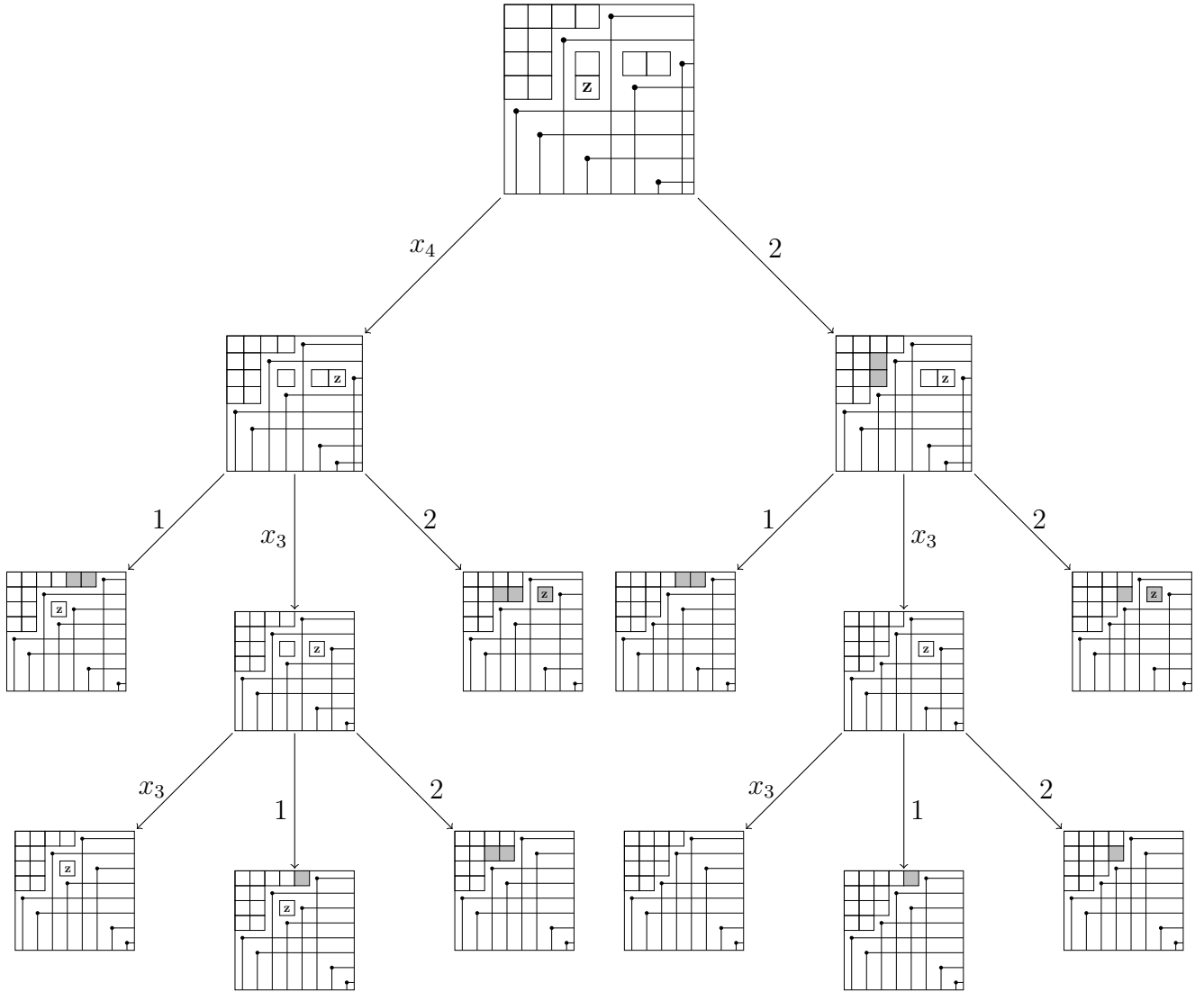

\begin{example}
Let $w=53861247$.
Using ${\mathcal T}(w)$ from Figure~\ref{fig:transTree}, we compute 
\begin{multline}\nonumber
{\mathfrak S}_{w}=x_4\cdot{\mathfrak S}_{73541268}
+x_4\cdot{\mathfrak S}_{57341268}
+x_{3}^2x_4\cdot{\mathfrak S}_{53641278}
+x_{3}x_4\cdot{\mathfrak S}_{63541278}
+x_{3}x_4\cdot{\mathfrak S}_{56341278}\\
+{\mathfrak S}_{74531268}
+{\mathfrak S}_{57431268}
+x_{3}^2\cdot{\mathfrak S}_{54631278}
+x_{3}\cdot{\mathfrak S}_{64531278}
+x_{3}\cdot{\mathfrak S}_{56431278}.
\end{multline}
For instance, $c_{(4,2,5,3),w}:=[x_1^4 x_2^2 x_3^5 x_4^3]{\mathfrak S}_{w}=1$ is witnessed by
\begin{itemize}
\item the path $w\stackrel{x_4}{\longrightarrow}\bullet\stackrel{x_3}{\longrightarrow}\bullet
\stackrel{x_3}{\longrightarrow} u=53641278$, and
\item the semistandard tableau
 \[T=\young(1111,223,33,44) \text{\ of shape $\lambda(u)$, flagged by $\phi(u)=(1,3,4,4)$.}\]
\end{itemize}
Proposition~\ref{lemma:transbasic} below formalizes a rule for $c_{\alpha,w}$ in terms of such pairs.
\end{example}

\subsection{Proof of $\#{\sf P}$-ness} 

The technical core of our proof of Theorem~\ref{thm:second} is to show:

\begin{theorem}\label{prop:schubinsharpP}
The problem of computing $c_{\alpha,w}$, given input $\alpha$ and ${\sf code}(w)$, is in $\#{\sf P}$.
\end{theorem}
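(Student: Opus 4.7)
The plan is to produce certificates of the form $(P,T)$, where $P$ is a root-to-leaf path in the graphical transition tree $\mathcal{T}(w)$ of Theorem~\ref{thm:transition}, terminating at some vexillary $v \in \mathcal{L}(w)$, and $T \in {\sf SSYT}(\lambda(v),\phi(v))$ is a flagged semistandard tableau at the leaf, subject to the constraint that the product of the edge labels along $P$ (where a deletion edge contributes $x_r$ and a march edge contributes $1$) times $x^{{\sf content}(T)}$ equals $x^{\alpha}$. Iteratively unfolding the transition recurrence (\ref{eqn:thetrans}) until every leaf is vexillary and then expanding each ${\mathfrak S}_v$ at a leaf via (\ref{eqn:Sept28abc}) and (\ref{eqn:flaggen}) shows that the number of valid pairs $(P,T)$ is precisely $c_{\alpha,w}$. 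To place this in $\#{\sf P}$, I will exhibit a nondeterministic polynomial-time Turing machine whose accepting computations are in bijection with these pairs.

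Writing $N := |{\sf code}(w)| + |\alpha|$ for the input length, I encode $T$ compactly by its row-content matrix $(n_{ik})$, where $n_{ik}$ is the number of $k$'s in row $i$. This matrix has $O(L)$ rows (since $\ell(\lambda(v)) \leq L$), $O(L)$ columns (since the largest flag entry is $\leq L$), and each entry is at most $|\alpha|$, so $T$ occupies $O(L^{2} \log |\alpha|)$ bits. Verifying that $(n_{ik})$ is the row-content matrix of an element of ${\sf SSYT}(\lambda(v),\phi(v))$ with the required residual content $\alpha$ minus the path-monomial exponent vector reduces to checking row sums, column sums, row weak-monotonicity, column strict-monotonicity, and flag bounds, all in time polynomial in $N$ after $\lambda(v), \phi(v)$ are read off from ${\sf code}(v)$; vexillarity of $v$ is certified through Theorem~\ref{thm:vexCode}.

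The path $P$ is simulated step by step, the machine maintaining only ${\sf code}(u)$ for the current permutation $u$. Propositions~\ref{prop:wPoly} and~\ref{prop:accBoxPoly} recover $(u(1),\dots,u(L))$ and the accessible box $\mathbf{z}_{u}$ in $O(L^{2})$ time, and then ${\sf Piv}(\mathbf{z}_{u})$ and the code of the child after (T.1) or (T.2) can be produced by direct simulation in polynomial time. Because $\ell(w)$ may be exponential in $N$, I group consecutive deletion moves of $P$ into compressed steps of the form $(r,k)$, meaning ``perform $k$ deletion moves, each contributing $x_{r}$, until a march move or a vexillary leaf is reached,'' with $k$ recorded in binary. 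The compressed $P$ then has length proportional to the number of march moves plus one.

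The main obstacle is showing that the number of elementary steps along any accepting path stays polynomial in $L$ after this compression. I would argue this in two parts: first, that the number of march moves on any root-to-leaf path in $\mathcal{T}(w)$ is bounded by a combinatorial invariant polynomial in $L$, for instance the number of ordered pairs of essential boxes that violate Fulton's criterion at the root (which is at most $\binom{L}{2}^{2}$ and strictly decreases with each march); and second, that between two consecutive march moves the accessible row index is forced to be constant, so that the intervening deletions truly do batch into one $(r,k)$-step. The second point is the most delicate and requires a careful analysis of how (T.1) affects $\mathbf{z}_{u}$ and ${\sf Ess}(u)$. Granted these structural bounds, the certificate has size $\operatorname{poly}(N)$, each step of verification runs in $\operatorname{poly}(N)$ time, and the number of accepting computations equals $c_{\alpha,w}$, placing the problem in $\#{\sf P}$.
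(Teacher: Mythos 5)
Your high-level proof strategy is essentially the paper's: certificates are a compressed root-to-leaf path in $\TT(w)$ together with a row-content matrix encoding of a flagged tableau at the leaf, the number of valid certificates equals $c_{\alpha,w}$ by iterating \eqref{eqn:thetrans} and then expanding at vexillary leaves via \eqref{eqn:Sept28abc} and \eqref{eqn:flaggen}, and verification is done by simulating the moves from ${\sf code}(u)$ (Propositions~\ref{prop:wPoly}, \ref{prop:accBoxPoly}, \ref{prop:movePoly}) plus the vexillarity check of Theorem~\ref{thm:vexCode} and the tableau check of Proposition~\ref{prop:TabPoly}. The difference — and the gap — is in your length bound, which you correctly flag as the main obstacle.

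Your second structural claim, that the row of the accessible box is constant between two consecutive march moves, is false. A deletion (T.1) removes the accessible box itself, and the new southmost-then-eastmost essential box outside the dominant component may well lie in a different row (this happens, e.g., whenever the deleted box was the only box of ${\sf Ess}(u)\smallsetminus{\sf Dom}(u)$ in its row). The paper's certificate format (X.1) permits several batched deletion entries $(x_{r_1},m_1),(x_{r_2},m_2),\dots$ with distinct rows between two marches precisely for this reason; the only restriction is that two \emph{consecutive} batches may not use the same row. The structural fact actually exploited (Proposition~\ref{prop:lengthWitPoly}) is that along any transition path the accessible box moves weakly northwest, so for each fixed $r$ the steps with accessible box in row $r$ form an interval $I^{(r)}$; within $I^{(r)}$ the deletion batches $(x_r,\cdot)$ must alternate with march moves by (X.1), and each march move strictly reduces the number of non-dominant connected components of the diagram meeting row $r$, of which there are at most $r-1$. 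This gives $\#I^{(r)}\le 2(r-1)$ and hence $|S|\le L^2$. Your first claim, that the number of march moves is bounded by the number of Fulton-violating pairs which ``strictly decreases with each march,'' is asserted without an argument and is not obviously true: a march move rearranges several rows and columns of the diagram and I do not see why it must net-remove a $2143$ violation. But even granting a polynomial bound on march moves, without the interval decomposition (or the false claim 2) you still have no bound on how many distinct-row deletion batches occur between two marches, so the argument does not close.
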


Define $X$ to be the set consisting of pairs $(S,R)$ where:
\begin{itemize}
\item[(X.1)] $S=(s_1,\ldots,s_h)$, $s_t\in [L]\cup \{(x_k,m_t) \ :  \ k\in[L], m_t\in\mathbb{Z}_{>0}\}$ such that 
if $s_t=(x_k,m_t)$ then $s_{t+1}\neq(x_k,m_{t+1})$ for $t<h$, and 
\item[(X.2)] $R=(r_{ij})_{1\leq i,j\leq L}$, where $r_{ij}\in\mathbb{Z}_{\geq 0}$.
\end{itemize}

Fix $w\in S_{\infty}$ and a vexillary permutation $v\in S_{\infty}$. 
A \emph{$(w,v)$-transition string} is a sequence $S=(s_1,\ldots,s_h)$ 
satisfying (X.1) such that if we interpret $i$ as $\bullet \stackrel{i}{\longrightarrow}\bullet$ and
 $(x_k,m_t)$ as $\bullet \stackrel{x_k}{\longrightarrow}\bullet \cdots \bullet\stackrel{x_k}{\longrightarrow} \bullet$ ($m_t$-times) then $S$ describes a path from $w$ to (a leaf labelled by) $v$ in ${\mathcal T}(w)$. 
 Let ${\sf Trans}(w,v)$ be the set of such sequences.

The \emph{deletion weight} of $S\in {\sf Trans}(w,v)$ is
\[{\sf delwt}(S)=\displaystyle\sum m_t\cdot{\vec{e_r}},\]
where the summation is over $1\leq t\leq h$ such that $s_t=(x_r,m_t)\in S$ for some $r\in [L]$ (depending on $t$).  
Here ${\vec{e_r}} \in \mathbb{Z}_{\geq 0}^L$ is the $r$-th standard basis vector and $L$ is the length of ${\sf code}(w)=(c_1,c_2,\ldots,c_L)$.

\begin{example}
In Figure \ref{fig:transTree} we read the 
$(w=53861247, v=54631278)$-transition string $S=(2,(x_3,2))$ as the path
$w\stackrel{2}{\longrightarrow}\bullet \stackrel{x_3}{\longrightarrow}\bullet \stackrel{x_3}{\longrightarrow}
v$. Here, ${\sf delwt}(S)=(0,0,2,0)$.
\end{example}

Suppose $T$ is a tableau of shape $\lambda=(\lambda_1\geq \lambda_2\geq\ldots\geq \lambda_L\geq 0)$, with entries in $[L]$ and weakly increasing along rows.   Define 
\[R(T)=(r_{ij})_{1\leq i,j\leq L}\]
to be the $L\times L$ matrix where $r_{ij}$ is the number of $j$'s in row $i$ of $T$.
$R(T)$ encodes $T$. As pointed out in (a preprint version of) \cite{Narayanan}, $T$ might have exponentially many (in $L$) boxes, whereas $R(T)$ is a $O(L^2)$ description of $T$.

\begin{example}
If $\lambda=(4,3,1,0,0)$ and
\[T=\young(1123,245,4) \longleftrightarrow
R(T)=\left(\begin{matrix}
2 & 1 & 1 & 0& 0\\
0 & 1 & 0 & 1& 1\\
0& 0 & 0 &1 & 0\\
0 & 0 & 0 & 0 & 0\\
0 & 0 & 0 & 0 & 0
\end{matrix}
\right)
\]
\end{example}

Let $X_{\alpha,w}=\{(S,R(T))\}\subseteq X$ such that the following hold:
\begin{itemize}
    \item[(X.1')] $S \in {\sf Trans}(w,v)$, 
    \item[(X.2')] $T\in{\sf SSYT}(\lambda(v),\phi(v))$, and
    \item[(X.3')] ${\sf delwt}(S)+{\sf content}(T) =\alpha$.
\end{itemize}

\begin{proposition} 
\label{lemma:transbasic} $c_{\alpha,w}=\#X_{\alpha,w}$.
\end{proposition}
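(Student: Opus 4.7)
The plan is to iterate the transition recurrence (Theorem~\ref{thm:transition}) down the tree $\mathcal{T}(w)$ until reaching the vexillary leaves, and then apply the flagged Schur expansion at each leaf. Iterating (\ref{eqn:thetrans}) along $\mathcal{T}(w)$ yields
\begin{align*}
\mathfrak{S}_w \;=\; \sum_{P} \Bigl(\,\prod_{e \in P_{\mathrm{del}}} x_{r(e)}\Bigr)\, \mathfrak{S}_{v(P)},
\end{align*}
where $P$ ranges over root-to-leaf paths in $\mathcal{T}(w)$, $v(P)$ is the vexillary permutation labeling the leaf of $P$, and $P_{\mathrm{del}}$ is the set of deletion edges of $P$ with associated row indices $r(e)$ (so that edge $e$ carries label $x_{r(e)}$); march-move edges contribute a factor of $1$. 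Since $v(P)$ is vexillary, (\ref{eqn:Sept28abc}) and (\ref{eqn:flaggen}) give
\begin{align*}
\mathfrak{S}_{v(P)} \;=\; \sum_{T \in {\sf SSYT}(\lambda(v(P)),\, \phi(v(P)))} x^{{\sf content}(T)}.
\end{align*}

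Next I would identify each path $P$ with a $(w,v)$-transition string. Reading the edge labels of $P$ in order produces a word whose letters are either march indices $i \in [L]$ or deletion variables $x_r$; collapsing each maximal run of consecutive equal deletion letters $x_r$ into a single token $(x_r, m_t)$, with $m_t$ the run length, yields a sequence $S$ satisfying the non-adjacency condition of (X.1). This collapse is a bijection between edge-label words of root-to-leaf paths terminating at a leaf labeled $v$ and the set ${\sf Trans}(w,v)$, and under this bijection $\prod_{e \in P_{\mathrm{del}}} x_{r(e)} = x^{{\sf delwt}(S)}$. Substituting into the expansion above and extracting the coefficient of $x^\alpha$,
\begin{align*}
c_{\alpha,w} \;=\; \#\bigl\{(S, T) : S \in {\sf Trans}(w,v),\ T \in {\sf SSYT}(\lambda(v), \phi(v)),\ {\sf delwt}(S) + {\sf content}(T) = \alpha\bigr\},
\end{align*}
where the count is over all vexillary $v$ arising as leaf labels. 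Finally, since tableaux in ${\sf SSYT}(\lambda(v), \phi(v))$ have entries bounded by the flag and hence lie in $[L]$, the encoding $T \mapsto R(T)$ is injective, and the pairs $(S, R(T))$ traverse $X_{\alpha,w}$ bijectively, giving $c_{\alpha,w} = \#X_{\alpha,w}$.

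The main obstacle is rigorously justifying the iterated-transition expansion, which requires checking that $\mathcal{T}(w)$ is finite (so the iteration terminates) and that every leaf is vexillary (so (\ref{eqn:Sept28abc}) applies). Both are immediate from the construction of $\mathcal{T}(w)$ as recalled from \cite{LS:transition, Knutson.Yong}: each non-vexillary step strictly simplifies $D(w)$ (e.g., by reducing the number of boxes outside ${\sf Dom}$, or by replacing Fulton-obstructing essential pairs with strictly smaller ones), which forces termination. Beyond this, the argument is essentially a formal reorganization — the grouping of deletion runs into $(x_r, m_t)$ tokens is a canonical reindexing, and the $T \leftrightarrow R(T)$ identification is the standard $O(L^2)$-size encoding used precisely because $T$ itself can be exponentially large in the input $({\sf code}(w), \alpha)$.
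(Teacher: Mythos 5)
Your argument is correct and follows essentially the same route as the paper's: iterate the transition recurrence (\ref{eqn:thetrans}) to express $\mathfrak{S}_w$ as $\sum_{v} \sum_{S \in {\sf Trans}(w,v)} x^{{\sf delwt}(S)}\, \mathfrak{S}_v$, then expand each vexillary leaf via (\ref{eqn:Sept28abc}) and (\ref{eqn:flaggen}) and extract the coefficient of $x^\alpha$. The extra bookkeeping you supply (the collapse of deletion runs into $(x_r,m_t)$ tokens, injectivity of $T \mapsto R(T)$, and termination of the tree) is exactly what the paper's terse one-paragraph proof leaves implicit, so this is the same proof with the details filled in rather than a different approach.
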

\begin{proof}
Iterating (\ref{eqn:thetrans}), 
\[{\mathfrak S}_w=\sum_{\text{vexillary  } v\in S_{\infty}}  \sum_{S\in {\sf Trans}(w,v)} x^{{\sf delwt}(S)} {\mathfrak S}_{v}.\]
Hence
\begin{equation}
\label{eqn:Jul16abc}
c_{\alpha,w}=
\sum_{\text{vexillary  } v\in S_{\infty}}  \sum_{S\in {\sf Trans}(w,v)} [x^{\alpha}] x^{{\sf delwt}(S)} {\mathfrak S}_{v}.
\end{equation}
The result then follows from
by (\ref{eqn:Sept28abc}), (\ref{eqn:flaggen}), and (\ref{eqn:Jul16abc}) combined.\end{proof}

\begin{proposition}[cf.~\cite{LS:transition}\label{prop:childCodePoly}]
Let ${\sf code}(w)=(c_1,\ldots,c_L)$. Suppose $D(w')$ is obtained from $D(w)$ using move \emph{(T.1)} and $D(w'')$ is obtained from $D(w)$ with move \emph{(T.2)} for a pivot in row $i$. There is an $O(L^{2})$-time algorithm to compute
\begin{itemize}
\item[\emph{(I)}] ${\sf code}(w')=(c_1,\ldots,c_{r-1},c_{r}-1,c_{r+1},\ldots,c_L)$ and 
\item[\emph{(II)}] ${\sf code}(w'')=(c_1,\ldots,c_{i-1},c_{i}+b,c_{i+1},\ldots,c_{r-1},c_r-b,c_{r+1},\ldots, c_L)$,
for some $b\in\mathbb{Z}_{>0}$.
\end{itemize}
\end{proposition}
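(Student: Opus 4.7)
The plan is to reduce the problem to preliminary data already produced by earlier propositions, then either read off (for (I)) or directly recompute (for (II)) the new code. By Proposition~\ref{prop:wPoly}, the one-line notation $(w(1),\ldots,w(L))$ of $w$ is built from ${\sf code}(w)$ in $O(L^2)$ time. By Proposition~\ref{prop:accBoxPoly}, $\mathbf{z}_w=(r,c)$ is computed in $O(L^2)$ time; a single scan then locates $k=w^{-1}(c)$ in $O(L)$ time. These pieces of data suffice to realize $w'$ and $w''$ as permutations obtained from $w$ by a small, bounded number of transpositions applied to the one-line notation.

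For part (I): by the construction of the deletion move (T.1), $D(w')=D(w)\smallsetminus\{(r,c)\}$. Since the $j$th entry of ${\sf code}(w)$ equals the number of boxes of the diagram lying in row $j$, the only entry that changes is the $r$th, which decreases by exactly one. Writing down the resulting vector takes $O(L)$ time, yielding the form claimed in (I).

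For part (II): perform the two transpositions of entries that define $w''$ on the stored one-line notation in $O(1)$ time, and then compute ${\sf code}(w'')$ directly by counting, for each position $j\in[L]$, the number of $\ell>j$ with $w''(\ell)<w''(j)$. This counting costs $O(L)$ per position and $O(L^2)$ in total. It remains to argue that ${\sf code}(w'')$ has the advertised shape. Since $w$ and $w''$ agree outside the three positions $\{i,r,k\}$ (where the three values are merely cyclically permuted), the comparison of $c_j(w)$ with $c_j(w'')$ reduces to a small case analysis: when $j<i$ the three affected positions all lie to the right of $j$, so the multiset of values there is preserved and $c_j$ is unchanged; the maximality of the pivot $(i,w(i))$ among $\bullet$'s northwest of $\mathbf{z}_w$ rules out any $w(j)$ strictly between $w(i)$ and $c$ for $i<j<r$, again preserving $c_j$; and the essentialness of $\mathbf{z}_w$, combined with its southmost-then-eastmost position in ${\sf Ess}(w)\smallsetminus{\sf Dom}(w)$, handles the remaining range $r<j\leq k$. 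Having verified this, the common increment $b$ is read off as $c_i(w'')-c_i$, which matches $c_r-c_r(w'')$ and is positive because $w(i)<c<w(r)$.

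The main obstacle is the last step of the case analysis in (II): ruling out any perturbation of code entries $c_j$ for $r<j\leq k$. This is precisely where the extremal choice of $\mathbf{z}_w$ within ${\sf Ess}(w)\smallsetminus{\sf Dom}(w)$ is essential, preventing a $\bullet$ with value strictly between $c$ and $w(r)$ from appearing in a position that would shift the code outside of rows $i$ and $r$. The complexity bound itself is then immediate from the $O(L^2)$ cost of the preliminary propositions combined with a direct $O(L^2)$ inversion-count computation.
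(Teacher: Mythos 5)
Your route is genuinely different from the paper's. The paper never forms the one-line notation of $w''$: instead it reasons about how the $b$ boxes of each nonempty row of $D(w)\cap\mathcal{R}$ cascade one row up under (T.2) (so all rows other than $i$ and $r$ are preserved), and then computes $b$ by the closed formula $b=c_r-\big[(w(i)-1)-m\big]$ with $m=\#\{h<r:w(h)<w(i)\}$, all of which requires only $w(1),\dots,w(L)$. You instead build $w''$ at the level of permutations and recompute its code by inversion counting, verifying the advertised shape via a case analysis; that case analysis (pivot maximality forbids $i<j<r$ with $w(i)<w(j)<c$; the extremal choice of $\mathbf{z}_w$ forbids $\ell>r$ with $c<w(\ell)<w(r)$) is correct in substance and is in fact the permutation-level shadow of the paper's box-cascade argument.

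There is, however, a genuine gap in the complexity claim for (II). You propose to evaluate $c_j(w'')=\#\{\ell>j:w''(\ell)<w''(j)\}$ ``in $O(L)$ per position,'' but the only data you have is the length-$L$ prefix $w(1),\dots,w(L)$ from Proposition~\ref{prop:wPoly}, and the index set $\{\ell>j\}$ is not bounded by $L$: when $w''(j)>L+1$ there are positions $\ell>L$ with $w''(\ell)=\ell<w''(j)$ that your scan misses. Worse, the column $c$ of $\mathbf{z}_w$ (and hence $k=w^{-1}(c)$) can strictly exceed $L$; e.g.\ $w=21534$ has ${\sf code}(w)=(1,0,2)$, $L=3$, $\mathbf{z}_w=(3,4)$ and $k=w^{-1}(4)=5>L$, so ``performing the two transpositions on the stored one-line notation'' does nothing to the stored array, and a scan for $c=4$ among $w(1),w(2),w(3)$ never finds it. The fix is easy but needed: compute entries via the identity $c_j(w'')=(w''(j)-1)-\#\{\ell<j:w''(\ell)<w''(j)\}$, which depends only on the prefix $w''(1),\dots,w''(j)$, and obtain $w''(1),\dots,w''(L)$ from $w(1),\dots,w(L)$ by setting $w''(i)=c$, $w''(r)=w(i)$, and $w''(k)=w(r)$ only when $k\le L$. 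With that patch, the algorithm is indeed $O(L^2)$; the paper's route via the $b$-formula sidesteps the issue entirely because it never needs $w''$.
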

\begin{proof}
By Proposition~\ref{prop:accBoxPoly}, determine $\mathbf{z}_w:=(r,c)$ in $O(L^2)$-time.

For (I), $D(w')$ is obtained from $D(w)$ by deleting $\mathbf{z}_w$; so the expression in (I) is clear.  

For (II), using Proposition \ref{prop:wPoly}, we can find $\mathbf{x}=(i,w(i))$, in $O(L^{2})$-time; this is our (T.2) pivot.
Notice that row $r$ of $D(w)\cap {\mathcal R}$ is nonempty (it contains $\mathbf{z}_w=(r,c)$); let $b$ be the number of boxes in this row.
It is straightforward from the graphical description of ${\mathcal R}$ 
in terms of Rothe diagrams that each row of
$D(w)\cap {\mathcal R}$ either has zero boxes or $b>0$ boxes. Moreover, the $d$-th box (say, from the left) of each row are in the same column. 

Suppose $j_1,\ldots,j_m\in [i+1,r]$ index the rows where $D(w)\cap {\mathcal R}\neq\emptyset$ (and thus has $b$ boxes). (T.2) moves the $b$ boxes of $j_1$ to
row $i$ and moves the $b$ boxes of $j_q$ to row $j_{q-1}$ for $q=2,\ldots,m$. As explained above $j_m=r$, so (T.2) moves no boxes into row $r$. 
Thus row $r$ of $D(w'')\cap {\mathcal R}$
has zero boxes. 

It remains to compute $b$ in $O(L^{2})$-time.
Using Proposition~\ref{prop:wPoly} compute, in $O(L^2)$-time, 
\[m:=\#\{h<r \ : \ w(h)<w(i)\}.\] 
Clearly $b=c_r-[(w(i)-1)-m]$.
\end{proof}

Let $s_t=(x_r,m_t)$, as in (X.1), be a valid (multi)-deletion move on $u\in \TT(w)$. Let $u^{\langle m\rangle}\in\TT(w)$ be defined by 
$u \stackrel{x_k}{\longrightarrow}\bullet \cdots \bullet\stackrel{x_k}{\longrightarrow} u^{\langle m_t\rangle}$ ($m_t$-times). 

\begin{proposition}\label{prop:movePoly}
Suppose $u\in \TT(w)$ where ${\sf code}(u)=({\widetilde c}_1,\ldots,{\widetilde c}_{L'})$.  
Let $s_t=(x_k,m_t)$ or $s_t=i$ be as in (X.1). 
Given input ${\sf code}(u)$ and $s_t$,
there is an $O(L^2)$ algorithm to 
respectively determine if $u \stackrel{x_k}{\longrightarrow}\bullet \cdots \bullet\stackrel{x_k}{\longrightarrow} u^{\langle m_t\rangle}$ ($m_t$-times) or $u\stackrel{i}{\longrightarrow} u''$ occurs in
${\mathcal T}(w)$ and (if yes) to compute
\begin{itemize} 
\item  ${\sf code}(u^{\langle m_t\rangle})$ in the case $s_t=(x_k,m_t)$ (a multi-deletion move (T.1)), or 
\item ${\sf code}(u'')$ in the case $s_t=i$ (a march move (T.2)). 
\end{itemize}
\end{proposition}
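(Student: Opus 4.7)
The plan is to dispatch the two kinds of moves separately, building on Propositions~\ref{prop:wPoly}, \ref{prop:accBoxPoly}, and \ref{prop:childCodePoly}.

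For the march move $s_t=i$, I would first call Proposition~\ref{prop:accBoxPoly} on ${\sf code}(u)$ to compute $\mathbf{z}_u=(r,c)$ (or report that it does not exist, in which case no march move is valid) in $O(L^2)$-time. Then Proposition~\ref{prop:wPoly} produces $(u(1),\ldots,u(L'))$ in $O(L^2)$-time; from this I can read off ${\sf Piv}(\mathbf{z}_u)$ by scanning the $\bullet$'s $(j,u(j))$ strictly northwest of $(r,c)$ and retaining those that are maximally southeast, again in $O(L^2)$-time. Checking whether $(i,u(i))\in{\sf Piv}(\mathbf{z}_u)$ is then immediate; if the test fails the move is invalid, and otherwise Proposition~\ref{prop:childCodePoly}(II) yields ${\sf code}(u'')$ in $O(L^2)$-time.

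For the multi-deletion $s_t=(x_k,m_t)$, the strategy is to iteratively apply the single-deletion move (T.1) $m_t$ times. Starting from $v^{(0)}:=u$, at each stage $j=0,\ldots,m_t-1$ I would use Proposition~\ref{prop:accBoxPoly} to locate $\mathbf{z}_{v^{(j)}}$, verify that it lies in row $k$, and apply Proposition~\ref{prop:childCodePoly}(I) (which simply decrements the $k$-th code entry by one) to produce ${\sf code}(v^{(j+1)})$. Any stage at which the accessible box fails to exist or to lie in row $k$ renders the multi-deletion invalid; otherwise, the resulting code is $(\tilde c_1,\ldots,\tilde c_k-m_t,\ldots,\tilde c_{L'})$, and the sequence of $m_t$ successful accessibility checks certifies validity.

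The principal obstacle is matching the claimed $O(L^2)$ bound for the multi-deletion, since a naive iteration yields $O(m_t L^2)$. I would address this by observing that a single deletion alters $D(v)$ only at the box $\mathbf{z}_v$, so both ${\sf Ess}(v)$ and ${\sf Dom}(v)$ change only in a bounded neighborhood of the removed box. Thus after a one-time $O(L^2)$ precomputation of $(u(1),\ldots,u(L'))$, one should be able to update the accessible box of $v^{(j+1)}$ from that of $v^{(j)}$ in $O(L)$-time per step by locally inspecting the rows and columns adjacent to $\mathbf{z}_{v^{(j)}}$. Carefully verifying this local-update lemma, and then combining it with the fact that each deletion removes a distinct box from the finite diagram $D(u)$, are the delicate points I expect will consume most of the technical work.
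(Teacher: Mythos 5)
Your treatment of the march move $s_t=i$ matches the paper: compute $\mathbf{z}_u$, compute the one-line form of $u$, read off ${\sf Piv}(\mathbf{z}_u)$, test membership, and call Proposition~\ref{prop:childCodePoly}(II). That part is fine.

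The multi-deletion case has a genuine gap, and it is precisely at the point you flag as the delicate one. Iterating the single-deletion move $m_t$ times, even with an $O(L)$ per-step local update after an $O(L^2)$ precomputation, gives runtime $O(L^2 + m_t L)$, not $O(L^2)$. The issue is that $m_t$ is \emph{not} bounded by $L$ or by any polynomial in $L$; under the ``arithmetic operations take constant time'' model being used, $m_t$ can be as large as a single code entry $\tilde c_r$, which is unbounded relative to $L$. Your remark that ``each deletion removes a distinct box from the finite diagram $D(u)$'' bounds the number of deletions by $\#D(u)=\sum_i \tilde c_i$, which again is unconstrained by $L$. So no per-step scheme, however cheap the step, yields the claimed bound.

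The paper's proof avoids all iteration in $m_t$. It performs a single arithmetic test after computing $\mathbf{z}_u=(r,c)$ and $(u(1),\dots,u(L'))$: the multi-deletion $s_t=(x_k,m_t)$ is valid if and only if $k=r$ and
\[
\tilde c_r - \Bigl(\bigl(\min_{i\in[r]}u(i)\bigr)-1\Bigr)\;\geq\; m_t,
\]
because the left side of the inequality is exactly the number of boxes in row $r$ of $D(u)\smallsetminus{\sf Dom}(u)$ (the quantity $\min_{i\in[r]}u(i)-1$ is the row-$r$ width of the dominant component), and (T.1) deletes one such box at a time while keeping the accessible box in row $r$ until that supply is exhausted. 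If the test passes, ${\sf code}(u^{\langle m_t\rangle})$ is produced in one step by subtracting $m_t$ from the $r$-th entry (Proposition~\ref{prop:childCodePoly}(I) applied $m_t$ times collapses to a single decrement). This is the closed-form argument you would need to supply in place of the iterative scheme.
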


\begin{proof}
By Proposition \ref{prop:childCodePoly}, $L'\leq L$. Thus in our run-time analysis, we replace $L'$ by $L$.

Proposition \ref{prop:accBoxPoly} finds 
$\mathbf{z}_u:=(r,c)$ (or determines it does not exist) in $O(L^{2})$-time. If $\mathbf{z}_u$ does not exist then
$u$ is dominant and thus vexillary; output $s_t$ is invalid. Thus we assume henceforth that $\mathbf{z}_u$ exists.

\noindent
{\sf Case 1:}  ($s_t=(x_k,m_t)$.)   
Proposition \ref{prop:wPoly} finds $u(1),\ldots,u(L')$ in $O(L^2)$-time.
Determine (taking $O(L^2)$ time) if 
\begin{equation}
\label{eqn:Oct1xyz}
c_r-\left(\left(\min_{i\in[r]}{u(i)}\right)-1\right)\geq m_t,
\end{equation}
holds. We claim that $s_t$ is valid if and only if (\ref{eqn:Oct1xyz}) holds and $k=r$.
Indeed, observe
\begin{equation}
\label{eqn:Oct1abc}
\#\{\text{boxes in row $r$ of ${\sf Dom}(u)$}\}=
\left(\min_{i\in[r]}{u(i)}\right)-1.
\end{equation} 
Thus, (\ref{eqn:Oct1xyz}) is equivalent to the existence of 
$m_t$ boxes in row $r$ of $D(u)\smallsetminus {\sf Dom}(u)$. By (T.1), if $k=r$
this is equivalent to being able to apply $\bullet \stackrel{x_r}{\longrightarrow}\bullet$ successively $m_t$-times. 

Finally, if $s_t$ is valid, by $m_t$ applications of Proposition \ref{prop:childCodePoly} (I),
\begin{equation}\label{eq:delCode}
{\sf code}(u^{\langle m_t\rangle})=({\widetilde c}_1,\ldots,{\widetilde c}_{r-1},{\widetilde c}_{r}-m_t,{\widetilde c}_{r+1},\ldots, {\widetilde c}_{L'}).
\end{equation}
Hence we can output
(\ref{eq:delCode}) in $O(L^2)$-time. 

\noindent
{\sf Case 2:} ($s_t=i$.) By Proposition \ref{prop:wPoly}, determine $u(1),\ldots,u(L')$ from ${\sf code}(u)$ in $O(L^{2})$-time. In particular this 
computes $\mathbf{x}:=(i,u(i))$ in $O(L^{2})$-time. 
To decide if $s_t$ is valid we must determine if $\mathbf{x}\in {\sf Piv}(\mathbf{z}_u)$. 
 To do this, first calculate (in $O(L)$-time) 
 \[u_{NW}(\mathbf{z}_u):=\{(j,u(j)) \ : \ j<r, u(j)<c\}.\]
By definition,
 \[{\sf Piv}(\mathbf{z}_u)=\{(j,u(j))\in u_{NW}(\mathbf{z}_u) \ : \ \nexists (h,u(h))\in u_{NW}(\mathbf{z}_u) \mbox{ with } h>j, u(h)>u(j)\}.\]
${\sf Piv}(\mathbf{z}_u)$
takes $O(L)$-time to compute since $\#u_{NW}(\mathbf{z}_u)\leq r-1\leq L-1$. Hence
 we check if $\mathbf{x}\in {\sf Piv}(\mathbf{z}_u)$ in $O(L)$-time. If this is false, we output a rejection. Otherwise,
 Proposition \ref{prop:childCodePoly}
 outputs ${\sf code}(u'')$ in $O(L^2)$-time.
\end{proof}

\begin{proposition}\label{prop:lengthWitPoly} If $S=(s_1,\ldots,s_h)\in {\sf Trans}(w,v)$ then $h\leq L^2$.
\end{proposition}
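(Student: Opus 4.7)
My plan is to establish a strict monovariant on the nodes of $\TT(w)$ that drops by at least $1$ with each block in $S$. The cleanest candidate is the \emph{altitude}
\[A(u) := \sum_{i \geq 1} i\,c_i^{(u)} \;=\; \sum_{(r,c)\in D(u)} r.\]

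First I verify that every entry of $S$ strictly decreases $A$. By Proposition~\ref{prop:childCodePoly}(I), a single deletion at the accessible row $r$ sends $c_r \mapsto c_r - 1$, so $A$ drops by $r \geq 1$; a deletion block $(x_k, m_t)$, being $m_t$ consecutive such edges all at row $k$, therefore lowers $A$ by exactly $km_t \geq 1$. By Proposition~\ref{prop:childCodePoly}(II), a march block with pivot row $i$, accessible row $r > i$, and shift amount $b \geq 1$ sends $(c_i, c_r) \mapsto (c_i + b, c_r - b)$, lowering $A$ by $b(r-i) \geq 1$. Since $A(v) \geq 0$, summing over the $h$ blocks gives $h \leq A(w) - A(v) \leq A(w)$.

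Next I want to upgrade this to $h \leq L^2$. A useful structural step is that Proposition~\ref{prop:childCodePoly} shows neither move type ever creates a positive code entry at an index $> L$, so along the whole path the support of ${\sf code}(u)$ stays inside $[L]$. Moreover, $\#D(u) = \sum_i c_i^{(u)}$ is non-increasing (del strictly, march preserves), so $A(u) \leq L\cdot \#D(u) \leq L\cdot \#D(w)$. If one could show $\#D(w) \leq L$, the bound $h \leq L \cdot \#D(w) \leq L^2$ would follow immediately.

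The main obstacle is precisely that $\#D(w) = \sum c_i$ is not forced to be $\leq L$ by the input — for instance ${\sf code}(w) = (K, 0, K)$ has $\#D(w) = 2K$ while $L = 3$. To close the gap I would replace the raw altitude by a refined monovariant bounded intrinsically by $L^2$. One natural candidate is an injective labeling $\phi : S \hookrightarrow [L] \times [L]$ sending a deletion block $(x_k, m_t)$ to $(k, k)$ and a march block with pivot row $i$ and accessible row $r$ to $(i, r)$; the grouping rule (X.1) rules out consecutive repeated labels, and the remaining injectivity would have to come from tracking how the accessible box and pivots of $D(u)$ evolve — in particular, that once a specific $(i, r)$ has been used, the essential-set dynamics forbid its reuse on the same path. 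Verifying this injectivity (and identifying the right refined monovariant when injectivity fails) is the delicate combinatorial step I expect to be the crux of the proof.
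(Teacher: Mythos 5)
Your proposal correctly diagnoses the obstacle (the raw altitude $A(u)=\sum_i ic_i$ can be far larger than $L^2$), but it does not finish the proof, and the fallback you sketch does not work. The labeling $\phi$ sending a deletion block $(x_k,m_t)$ to $(k,k)$ cannot be injective: within a single row the deletion blocks and march blocks alternate by (X.1), so a path can pass through, say, $(x_r,m_1),\,i_1,\,(x_r,m_2),\,i_2,\ldots$ while the accessible box stays in row $r$, and then all those deletion blocks receive the same label $(r,r)$. So even granting the claim about march blocks, $\phi$ fails on the piece you thought was safe. You flagged this as "the delicate combinatorial step I expect to be the crux" --- and indeed it is precisely the step you have not supplied.

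The missing idea is structural, not a refinement of your monovariant. By (T.1) and (T.2) the accessible box $\mathbf{z}_{w_t}$ moves weakly northwest along the path, so the indices $t$ with $\mathbf{z}_{w_t}$ in a fixed row $r$ form an interval $I^{(r)}$; this already replaces your attempted global injection by a per-row bookkeeping. Within $I^{(r)}$, blocks alternate as you noted, so it suffices to bound the number of march moves there. The key observation (which replaces your attempted injectivity of $(i,r)\mapsto$ march block) is that every march move with accessible box in row $r$ strictly decreases the number of connected components of $D(w_t)$ meeting row $r$: the march move sweeps the entire connected component of $\mathbf{z}_{w_t}$ out of row $r$. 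Since at the start of $I^{(r)}$ there are at most $r-1$ non-dominant components in row $r$ (there are at most $r$ bullets weakly north of row $r$), one gets at most $r-1$ march moves, hence $\#I^{(r)}=O(r)$, and summing $r=1,\dots,L$ gives $h\le L^2$. That component-counting argument is the crux your proposal was missing.
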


\begin{proof}
Let $w:=w_0\stackrel{s_1}{\longrightarrow} w_1\stackrel{s_2}{\longrightarrow}\ldots \stackrel{s_{h}}{\longrightarrow} w_h=v$ be the path in $\TT(w)$ associated to $S$. By (T.1) and (T.2), ${\mathbf z}_{w_{t+1}}$ is weakly northwest of ${\mathbf z}_{w_{t}}$. Hence,
for any fixed $r$, those $t\in[0,h-1]$ with $\mathbf{z}_{w_t}$ in row $r$ form an interval $I^{(r)}\subseteq [0,h-1]$.  Since $1\leq r\leq L$, it suffices to prove
\begin{equation}
\label{eqn:July31abc}
\#I^{(r)}\leq 2(r-1).
\end{equation}

By (X.1) the transition moves acting on row $r$ alternate between multi-(T.1) moves
$(x_r,m_t)$ and (T.2) moves. Thus to show (\ref{eqn:July31abc}), it is enough to prove
\begin{equation}
\label{eqn:marchMax} 
\#\{t\in I^{(r)}:w_{t-1}\to w_t \text{ \ is a (T.2) move}\}\leq r-1.
\end{equation}
Consider a march move $i$ with $\mathbf{z}_{w_{t-1}}=(r,c)$ and $\mathbf{x}=(i,w_{t-1}(i))\in {\sf Piv}(\mathbf{z}_{w_{t-1}})$. 
By (T.2), if $(r,c')\in D({w_{t-1}})$ is in the same connected component as $\mathbf{z}_{w_{t-1}}$, the move $i$ takes $(r,c')$ strictly north of row $r$.
Thus, each march move strictly reduces the number of components in row $r$. Let $t_0=\min\{t\in I^{(r)}\}$.
Since there are at most $r$ $\bullet$'s weakly above row $r$, $D(w_{t_0})$ has at most $r-1$ (non-dominant) components in row $r$. Hence (\ref{eqn:marchMax}) holds, as desired.
\end{proof}

\begin{proposition}\label{prop:TabPoly}
Let $v$ be vexillary with ${\sf code}(v)=(c_1,\ldots,c_{L'})$ and $L'\leq L$.
There exists an $O(L^{2})$-time algorithm to check if $R=(r_{ij})_{1\leq i,j\leq L'}$ is $R=R(T)$ 
for some $T\in{\sf SSYT}(\lambda(v),\phi(v))$.
\end{proposition}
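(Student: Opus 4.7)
The plan is to reduce the question to a list of $O(L^2)$ local inequalities on the matrix $R = (r_{ij})$, each checkable in $O(1)$ time after precomputing partial sums.

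First, from ${\sf code}(v) = (c_1,\ldots,c_{L'})$ I would compute $\lambda(v)$ and $\phi(v)$. The shape $\lambda(v)$ is just ${\sf code}(v)$ sorted into decreasing order, computable in $O(L'^2)$ time. For the flag $\phi(v) = (\phi_1 \leq \cdots \leq \phi_m)$, for each $i$ with $c_i \neq 0$ I compute $e_i = \max\{j \geq i : c_j \geq c_i\}$ in $O(L')$ time, then sort the $e_i$'s in increasing order; this is $O(L'^2)$ overall.

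Next, recall that a matrix $R = (r_{ij})$ with $r_{ij} \in \mathbb{Z}_{\geq 0}$ encodes $R(T)$ for a tableau $T$ of shape $\lambda = \lambda(v)$, with entries in $[L']$, weakly increasing along rows, flagged by $\phi = \phi(v)$, and \emph{column-strictly} increasing (i.e., $T \in {\sf SSYT}(\lambda,\phi)$) if and only if:
\begin{itemize}
\item[(a)] $r_{ij} \in \mathbb{Z}_{\geq 0}$ for every $i,j \in [L']$;
\item[(b)] $\sum_{j=1}^{L'} r_{ij} = \lambda_i$ for every $i \in [L']$ (correct shape);
\item[(c)] $r_{ij} = 0$ whenever $j > \phi_i$ (flag condition); and
\item[(d)] $\sum_{k=1}^{v-1} r_{ik} \;\geq\; \sum_{k=1}^{v} r_{i+1,k}$ for every $i \in [L'-1]$ and every $v \in [L']$ (column-strict condition).
\end{itemize}
Here (a)--(c) are immediate from the definition of $R(T)$ and the flag condition. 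For (d), the standard argument is that the $m$-th letter of row $i+1$ equals $v$ precisely when $\sum_{k<v} r_{i+1,k} < m \leq \sum_{k\leq v} r_{i+1,k}$, so column-strict increase against row $i$ says that the $m$-th letter of row $i$ is $\leq v-1$, equivalently $\sum_{k\leq v-1} r_{ik} \geq m$; taking the maximal such $m$ on each side yields precisely (d).

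Finally, I check (a)--(d) in $O(L^2)$ time. Conditions (a), (b), (c) are each a direct scan over the $L'^2 \leq L^2$ entries of $R$. For (d), I precompute the partial sums $\Sigma_{i,v} := \sum_{k=1}^{v} r_{ik}$ for $i,v\in [L']$; this takes $O(L'^2)$ additions, after which each of the $O(L'^2)$ inequalities $\Sigma_{i,v-1} \geq \Sigma_{i+1,v}$ can be verified in constant time. Since $L' \leq L$, the total runtime is $O(L^2)$. There is no real obstacle here---the only care needed is getting the inequalities of (d) (equivalently, the Gelfand--Tsetlin pattern inequalities) stated correctly so that column strictness (rather than mere weak increase) is enforced.
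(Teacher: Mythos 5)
Your proposal is correct and follows essentially the same route as the paper: compute $\lambda(v)$ and $\phi(v)$, then verify in $O(L^2)$ time a list of linear conditions on $R$ expressing the flag, shape, and column-strictness constraints (your condition (d) is exactly the paper's inequality $\sum_{j'\leq j} r_{i+1,j'} \leq \sum_{j'<j} r_{ij'}$). The only cosmetic difference is that you derive the column-strictness inequalities directly, whereas the paper cites them from a preprint of Narayanan.
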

\begin{proof}
Since $L'\leq L$, it is $O(L^2)$-time to calculate $\phi(v), \lambda(v)$.
Let
\[\lambda_i:=\sum_{j=1}^{L'} r_{ij}, \text{\ for $1\leq i\leq L'$.}\]
First verify (in $O(L)$-time) that  $\lambda_i\geq\lambda_{i+1}$ for $1\leq i\leq L'-1$.
Then $R=R(T)$ where $T$ is the (unique) row weakly increasing tableau of shape $\lambda$
with $r_{ij}$ many $j$'s in row $i$. 

To verify $T\in {\sf SSYT}(\lambda(v),\phi(v))$ we must check that it is 
(i) is flagged by $\phi(v)$, (ii) has shape $\lambda(v)$, and (iii) is semistandard.
For (i), we need 
\begin{equation}
\label{eqn:needabc1}
r_{ij}=0 \mbox{ if } j> \phi(v)_i, \mbox{ for all }i,j\in[L'].
\end{equation}
 For (ii), we need 
\begin{equation}
\label{eqn:needabc2}
\lambda_i=\lambda(v)_i \mbox{ for each } i\in[L'].
\end{equation}
 For (iii), 
it remains to ensure that $T$ is column strict, i.e.,
\begin{equation}
\label{eqn:needabc3}
\sum_{j'\leq j }r_{i+1,j'}\leq \sum_{j'< j} r_{i,j'
}  \mbox{ for each } i\in[L'-1], j\in[L'].
\end{equation}
We found the inequalities (\ref{eqn:needabc2}) and (\ref{eqn:needabc3}) from a (preprint) version of \cite{Narayanan}.
 The inequalities (\ref{eqn:needabc1}), (\ref{eqn:needabc2}),
  and (\ref{eqn:needabc3}) can be checked in $O(L^{2})$-time since $i,j\in[L']\subseteq[L]$. 
\end{proof}

The following completes our proof that we can check that $(S,R)\in X_{\alpha,w}$ in $L^{O(1)}$-time.

\begin{proposition}\label{prop:NPwitnessPoly}
Given $(S,R)\in X$ and $({\sf code}(w),\alpha)$, one can determine if $(S,R)\in X_{\alpha,w}$ in $L^{O(1)}$-time.
\end{proposition}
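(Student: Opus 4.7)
The plan is to verify each of the three defining conditions (X.1'), (X.2'), (X.3') separately, using the earlier propositions as black boxes. First, I would quickly check the combinatorial form of $S$ in (X.1): that each $s_t$ has valid type and that no two consecutive entries are of the form $(x_k, m_t)$ and $(x_k, m_{t+1})$ with the same $k$. If the length $h$ of $S$ exceeds $L^2$, reject immediately by Proposition~\ref{prop:lengthWitPoly}. Otherwise, $h \leq L^2$, so we can afford polynomially many moves.

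Second, to test (X.1'), I would simulate the path in $\mathcal{T}(w)$ dictated by $S$. Set ${\sf code}(w_0) := {\sf code}(w)$, and for each $t = 1, \ldots, h$ invoke Proposition~\ref{prop:movePoly} with input $({\sf code}(w_{t-1}), s_t)$: this either certifies that $s_t$ is a legal transition move out of $w_{t-1}$ and returns ${\sf code}(w_t)$, or rejects. Each such call costs $O(L^2)$, and there are at most $L^2$ of them, so this stage runs in $O(L^4)$ time. Let $v := w_h$; then $S$ describes a path from $w$ to $v$ in $\mathcal{T}(w)$. To confirm that $v$ labels a leaf of $\mathcal{T}(w)$, apply Theorem~\ref{thm:vexCode} to ${\sf code}(v)$, which takes $O(L^2)$ time. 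If $v$ is vexillary, then $S \in {\sf Trans}(w,v)$; otherwise reject.

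Third, for (X.2'), I would use Proposition~\ref{prop:TabPoly}: given ${\sf code}(v)$ (already computed) and $R$, it decides in $O(L^2)$ time whether $R = R(T)$ for some $T \in {\sf SSYT}(\lambda(v), \phi(v))$. Finally, for (X.3'), compute ${\sf delwt}(S) = \sum_{t:\, s_t = (x_{r(t)}, m_t)} m_t \vec{e}_{r(t)}$ in $O(L^2)$ time by scanning $S$, and compute ${\sf content}(T) = (\mu_1, \ldots, \mu_L)$ by taking column sums $\mu_j = \sum_{i} r_{ij}$ of $R$ in $O(L^2)$ time. Then check the equality ${\sf delwt}(S) + {\sf content}(T) = \alpha$ coordinatewise in $O(L)$ time. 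Accept if and only if every check passes; the total running time is $L^{O(1)}$.

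There is no real obstacle here: all the heavy lifting has been isolated into Propositions~\ref{prop:lengthWitPoly}, \ref{prop:movePoly}, and \ref{prop:TabPoly} together with Theorem~\ref{thm:vexCode}, so the argument is essentially a bookkeeping orchestration. The only point that warrants care is ensuring that the codes produced along the simulated path stay of length at most $L$ (so that the running-time bounds for the subroutines remain valid), which follows from Proposition~\ref{prop:childCodePoly} since each transition move preserves or shortens the code length.
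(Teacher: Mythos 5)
Your proposal is correct and follows essentially the same route as the paper's proof: use Proposition~\ref{prop:lengthWitPoly} to bound $h \leq L^2$, simulate the transition path one step at a time with Proposition~\ref{prop:movePoly} (giving $O(L^4)$), confirm the terminal permutation is vexillary via Theorem~\ref{thm:vexCode}, apply Proposition~\ref{prop:TabPoly} for (X.2'), and then check the weight equation (X.3') directly. The only minor quibble is the cost you assign to the vexillarity check: the paper charges $O(L^3)$ for verifying condition (ii) of Theorem~\ref{thm:vexCode} rather than your $O(L^2)$, but since both are $L^{O(1)}$ this does not affect the conclusion.
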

\begin{proof}
By Propositions \ref{prop:movePoly} and \ref{prop:lengthWitPoly} combined, one determines in
$O(L^4)$-time if $S$ encodes a path 
$w:=w_0\stackrel{s_1}{\longrightarrow} w_1\stackrel{s_2}{\longrightarrow}\cdots \stackrel{s_h}{\longrightarrow} w_h=v$ in $\TT(w)$. If so, the length of 
${\sf code}(v)$ is at most $L$. Thus, using Theorem \ref{thm:vexCode}, one checks $v$ is vexillary in $O(L^{3})$-time. This decides if $S$ satisfies (X.1'). 
Proposition \ref{prop:TabPoly} checks $R$ satisfies (X.2') in $O(L^{2})$-time. Finally since $h\leq L^2$, computing ${\sf delwt}(S)$ takes $O(L^2)$-time.
Hence (X.3') is checkable in $O(L^2)$ time.
\end{proof}

\noindent
\emph{Proof of Theorem~\ref{prop:schubinsharpP}:}
By Proposition \ref{lemma:transbasic}, $\#X_{\alpha,w}=c_{\alpha,w}$. 
By Proposition \ref{prop:lengthWitPoly}, $(S,R)\in \#X_{\alpha,w}$ only if the list $S$ has at most
$L^2$ elements. Assuming this, we check $(S,R)$ satisfies (X.1) and (X.2) in $O(L^2)$-time. Using
Proposition \ref{prop:NPwitnessPoly},  we can verify $(S,R)\in X_{\alpha,w}$ in $L^{O(1)}$-time. Thus, given input $\alpha$ and ${\sf code}(w)$, computing $c_{\alpha,w}$ is in $\#{\sf P}$.
\qed

\subsection{Hardness, and the conclusion of the proof of Theorem \ref{thm:second}} 
\label{sec:schubinsharpPcomp}
\emph{Schur polynomials} are an important
basis of the vector space of symmetric polynomials. 
The {Schur polynomial} 
$s_{\lambda}=a_{\lambda+\delta}/a_{\delta} \text{ \ \ where $\lambda=(\lambda_1\geq \lambda_2\geq \cdots\geq \lambda_n\geq 0)$, $a_{\gamma}:=\det(x_{i}^{\gamma_j})_{i,j=1}^n$}$,
and $\delta=(n-1,n-2,\ldots,2,1,0)$. The flagged Schur function of Section \ref{sec:vex} is a generalization of the Schur polynomial.

A permutation $w$ is \emph{grassmannian} if it has at most one \emph{descent} $i$, i.e., where $w(i)>w(i+1)$.
	Given a partition $(\lambda_1\geq \lambda_2\geq \cdots\geq \lambda_L\geq 0)$ define a grassmannian permutation
	$w_{\lambda}$ by setting 
	\[w_{\lambda}(i)=i+\lambda_{L-i+1} \text{\ for $1\leq i\leq L$.}\] 
	For $w_{\lambda}$ grassmannian, it is well-known (see, e.g., \cite{Manivel}) that
\begin{equation}
\label{eqn:Jul17fff}
{\sf code}(w_{\lambda})=(\lambda_L,\lambda_{L-1},\ldots, \lambda_1).
\end{equation}  
 
Moreover, 
\begin{equation}
\label{eqn:Jul17abc}
{\mathfrak S}_{w_{\lambda}}=s_{\lambda}(x_1,\ldots,x_L)=\sum_{\alpha\in\mathbb{Z}_{\geq0}^L} K_{\lambda,\alpha} x^{\alpha},
\end{equation}
where $K_{\lambda,\alpha}$ is 
the \emph{Kostka coefficient}. This number counts semistandard
tableaux of shape $\lambda$ with content $\alpha$. 

By (\ref{eqn:Jul17abc}),
\begin{equation}
\label{eqn:Jul17xyz}
c_{\alpha,w_{\lambda}}=K_{\lambda,\alpha}.
\end{equation}
By Theorem~\ref{prop:schubinsharpP}, counting 
$c_{\alpha,w}$ is in $\#{\sf P}$.  Suppose there is an oracle to compute $c_{\alpha,w}$
in polynomial time in the input length of $({\sf code}(w),\alpha)$. This
input length is the same as for the input $\lambda,\alpha$ for $K_{\lambda, \alpha}$. Hence (\ref{eqn:Jul17fff}) 
and (\ref{eqn:Jul17xyz}) combined imply a polynomial-time counting reduction from $\{c_{\alpha,w}\}$ to Kostka coefficients. Now H.~Narayanan \cite{Narayanan} proved that counting $K_{\lambda,\alpha}$ is a 
$\#{\sf P}$-complete problem. Thus counting $c_{\alpha,w}$ is a $\#{\sf P}$-{\sf complete} problem. \qed

\begin{remark}
\label{remark:July17}
Suppose the input for counting $c_{\alpha,w}$ is $(\alpha,w)$ where $w\in S_n$ (in one-line notation). Then the above counting reduction is 
not polynomial time in the input length of the Kostka problem.
For example, suppose $\lambda=\alpha=(2^L,2^L,\ldots, 2^L)$ ($L$-many). Then the input length of this instance of the Kostka problem is $2L^2\in O(L^2)$. On the other hand, $w_{\lambda}\in S_{L+2^L}$. 
Therefore, a polynomial time algorithm for the Schubert coefficient problem in $n$ would have $\Omega(2^L)$ run time for the Kostka problem.

It seems unlikely that there is a polynomial-time reduction under this input assumption. This is our justification to encode $w$
via ${\sf code}(w)$ rather than one line notation. \qed
\end{remark}

\section*{Acknowledgments}
AY was supported by an NSF grant, a Simons Collaboration grant and a UIUC Campus Research Board grant. CR was supported by the National Science Foundation
Graduate Research Fellowship Program under Grant No. DGE – 1746047. We acknowledge Mathoverflow, 
S.~Kintali's blog ``My brain is open'' and R.~O'Donnell's Youtube videos (from his class at Carnegie Mellon)
for background. We thank Philipp Hieronymi, Alexandr Kostochka, Cara Monical, Erik Walsberg, Douglas West, Alexander Woo,  for helpful comments and conversations. We also thank the anonymous referee for their insightful suggestions that improved
the clarity of this paper.

\end{document}